\subjclass[2010]{Primary: 14F42, 14L30, 14R20, 19E15, 55Q99}
\keywords{Motivic homotopy theory, $\AA^{1}$-contractible smooth affine schemes, affine modifications, higher Chow groups, motivic cohomology, 
deformed Koras-Russell threefolds with degenerate fibers, Brieskorn-Pham surfaces}
\definecolor{dark-red}{rgb}{0.4,0.15,0.15}
\renewcommand{\AA}{\mathbb{A}}
\newcommand{\CC}{\mathbb{C}}
\newcommand{\GG}{\mathbb{G}}
\newcommand{\ZZ}{\mathbb{Z}}
\newcommand{\Sch}{\mathrm{Sch}}
\newcommand{\Sm}{\mathrm{Sm}}
\newcommand{\pr}{\mathrm{pr}}
\newcommand{\II}{\mathscr{I}}
\newcommand{\Bl}{\mathrm{Bl}}
\newcommand{\Spec}{\operatorname{Spec}}
\newcommand{\supp}{\operatorname{supp}}
\newcommand{\ML}{\operatorname{ML}}
\newcommand{\ddiv}{\mathrm{div}}
\newcommand{\R}{\mathbb{R}}
\newcommand{\RR}{\R}
\newcommand{\iso}{\cong}
\newcommand{\id}{\mathrm{id}}
\renewcommand{\setminus}{\smallsetminus}
\numberwithin{equation}{section} 
\theoremstyle{plain}
\theoremstyle{definition}
\newaliascnt{theorem}{equation}  
\newtheorem{theorem}[theorem]{Theorem}  
\newaliascnt{prop}{equation}  
\newtheorem{prop}[prop]{Proposition}
\newaliascnt{lemma}{equation}  
\newtheorem{lemma}[lemma]{Lemma}
\newaliascnt{corollary}{equation}  
\newtheorem{corollary}[corollary]{Corollary}
\newaliascnt{claim}{equation}  
\newaliascnt{conjecture}{equation}  
\newaliascnt{question}{equation}  
\newtheorem{question}[question]{Question}
\newaliascnt{defn}{equation}  
\newtheorem{defn}[defn]{Definition}
\newaliascnt{example}{equation}  
\newtheorem{example}[example]{Example}
\theoremstyle{remark}
\newaliascnt{remark}{equation}  
\newtheorem{remark}[remark]{Remark}
\newaliascnt{convention}{equation}  
\newcommand{\aref}[1]{\autoref{#1}}
\newcommand{\PP}{\mathbb{P}}
\newcommand{\CH}{\operatorname{CH}}
\newcommand{\SSing}{\operatorname{Sing}}
\begin{document}
\title{$\AA^{1}$-contractibility of affine modifications}
\author{Adrien Dubouloz}
\address{IMB UMR5584, CNRS, Universit{\'e} de Bourgogne Franche-Comt{\'e}, Dijon, France}
\email{Adrien.Dubouloz@u-bourgogne.fr}
\author{Sabrina Pauli}
\address{Department of Mathematics, University of Oslo, Norway}
\email{sabrinp@student.matnat.uio.no}
\author{Paul Arne {\O}stv{\ae}r}
\address{Department of Mathematics, University of Oslo, Norway}
\email{paularne@math.uio.no}
\begin{abstract}
We introduce Koras-Russell fiber bundles over algebraically closed fields of characteristic zero.
After a single suspension, 
this exhibits an infinite family of smooth affine $\AA^{1}$-contractible $3$-folds.
Moreover, 
we give examples of stably $\AA^{1}$-contractible smooth affine $4$-folds containing a Brieskorn-Pham surface, 
and a family of smooth affine $3$-folds with a higher dimensional $\mathbb{A}^{1}$-contractible total space.
\end{abstract}
\maketitle

\section{Introduction}
\label{sec:intro}
Suppose that $\mathcal{X}$ is some $d$-dimensional smooth affine $\AA^{1}$-contractible scheme of finite type over an algebraically closed field $F$ of characteristic zero.
Noether normalization shows there exists a branched covering and $\AA^{1}$-weak equivalence $\mathcal{X}\to\AA^{d}$, 
resembling a covering space in the sense of topology. 

When $d=1$, 
then $\mathcal{X}$ is isomorphic to $\AA^{1}$ \cite[Claim 5.7]{MR2335246}. 
Any $\AA^{1}$-chain connected smooth affine surface in the sense of Asok-Morel \cite{MR2803793} is $\AA^{1}$-ruled,
i.e., 
it contains a cylinderlike open subscheme of the form $\mathcal{X}'\times\AA^{1}$,
and hence of logarithmic Kodaira dimension $-\infty$ \cite{MR3351185}.
Thus, 
when $d=2$, 
deep classification results in birational geometry due to Miyanishi-Sugie and Fujita \cite{MR564667} imply that $\mathcal{X}$ is isomorphic to $\AA^{2}$ 
(see \aref{subsection:a1contractiblesmoothaffinesurfaces}).
Thus $\AA^{d}$ is the unique up to isomorphism smooth affine $\AA^{1}$-contractible and $\AA^{1}$-chain connected scheme of dimension $d\leq 2$. 
Such a characterization fails for $\AA^{3}$, 
as recently witnessed by Koras-Russell $3$-folds of the first kind \cite{MRDF}, \cite{MR3549169}.
Initially used in the monumental work of showing every $\GG_{m}$-action on the affine $3$-space is linearizable \cite{MR1464577},
these $3$-folds are of interest in their own right as potential counterexamples to the long-standing Zariski cancellation problem \cite{MR1423629}, \cite{MR1734345}.
Moreover,
they admit hyperbolic $\GG_{m}$-actions with a unique fixed point, 
and can be distinguished from $\AA^{3}$ by the Derksen and Makar-Limanov invariants recording locally nilpotent derivations on coordinate rings \cite{MR2259515}.

For integers $n,\alpha_{i}\geq 2$, 
where $\alpha_{1}$ and $\alpha_{2}$ are coprime, 
$a\in F^{\times}$, 
recall that Koras-Russell $3$-folds of the first kind are defined in $\AA^{4}$ by the equation:
\begin{equation}
\label{equation:firstkind}
\mathcal{X}(n,\alpha_{i},a)
:=
\{
x^{n}z=
y^{\alpha_{1}}+t^{\alpha_{2}}+ax
\}.
\end{equation} 
In \cite{MR3549169} it is shown that Koras-Russell $3$-folds of the first kind are stably $\AA^{1}$-contractible,
and consequently $\AA^{1}$-contractible after a finite suspension by the projective line $\PP^{1}$ pointed at infinity:
\begin{equation}
\label{equation:stableA1KR}
\mathcal{X}(n,\alpha_{i},a)\wedge \PP^{1}\wedge \cdots \wedge \PP^{1}\sim_{\AA^{1}}\ast.
\end{equation}
Similarly by \cite{MR3549169}, 
for $m\geq 1$, $n,\alpha_{i}\geq 2$, 
where $\alpha_{1}$ and $n\alpha_{2}$ are coprime,
$a\in F^{\times}$,
the same $\AA^{1}$-contractibility result hold for Koras-Russell $3$-folds of the second kind, 
defined in $\AA^{4}$ by the equation:
\begin{equation}
\label{equation:secondkind}
\mathcal{X}(m,n,\alpha_{i},a)
:=
\{
(x^{n}+y^{\alpha_{1}})^{m}z=
t^{\alpha_{2}}+ax
\}.
\end{equation} 

As shown in \cite{MRDF}, 
$\mathcal{X}(n,\alpha_{i},a)$ is in fact $\AA^{1}$-contractible. 
This makes \eqref{equation:firstkind} the first family of examples of smooth affine $\AA^{1}$-contractible schemes that are not isomorphic to affine spaces.
One cannot make such a distinction topologically because $\mathcal{X}(n,\alpha_{i},a)$ is diffeomorphic to $\RR^{6}$ when equipped with the Euclidean topology;
this was shown independently by Dimca and Ramanujam \cite[\S3]{MR1734345}.
From the viewpoint of motivic homotopy theory, 
the remarkable properties of a Koras-Russell $3$-fold of the first kind makes it an algebro-geometric analogue of the Whitehead manifold.
The latter is an open contractible $3$-manifold non-homeomorphic to $\RR^{3}$, 
and the first counterexample to the assertion that every contractible manifold is homeomorphic to a ball, 
as stated in a purported proof of the Poincar\'e conjecture \cite{zbMATH03015497}, \cite{zbMATH03019987}.
It is an open question whether Koras-Russell $3$-folds of the second kind \eqref{equation:secondkind} are $\AA^{1}$-contractible.

In this paper we extend the results in \cite{MRDF} and \cite{MR3549169} to classes of examples such as deformed Koras-Russell $3$-folds of the first kind
\begin{equation}
\label{equation:firstkindpolynomial}
\mathcal{X}(n,\alpha_{i},p)
:=
\{
x^{n}z=
y^{\alpha_{1}}+t^{\alpha_{2}}+xp(x,y,t)
\},
\end{equation} 
where $n,\alpha_{i}\geq 2$ are integers, $\alpha_{1}$ and $\alpha_{2}$ are coprime, and $p(x,y,t)\in F[x,y,t]$ satisfies $p(0,0,0)\in F^{\ast}$.
The special case $p(x,y,t)=a\in F^{\ast}$ recovers \eqref{equation:firstkind}.
By \cite{MRDF} it is known that $\mathcal{X}(n,\alpha_{i},p)$ is $\AA^{1}$-contractible when $p(x,y,t)=q(x)\in F[x]$ and $q(0)\in F^{\ast}$.
Note that $\mathcal{X}(n,\alpha_{i},p)$ is smooth according to the Jacobian criterion since we assume $p(0,0,0)\in F^{\ast}$. 
Moreover, 
the unique singular fiber of the projection map 
\begin{equation}
\label{equation:firstkindpolynomialprojection}
\pr_{x}:
\mathcal{X}(n,\alpha_{i},p)
\to
\AA^{1}_{x}
\end{equation} 
is a cylinder on the cuspidal curve $\Gamma_{\alpha_{1},\alpha_{2}}:=\{y^{\alpha_{1}}+t^{\alpha_{2}}=0\}\subset\AA^{2}$, which is $\AA^{1}$-contractible:
\begin{equation}
\label{equation:firstkindpolynomialinverse}
\pr_{x}^{-1}(0)
=
\Gamma_{\alpha_{1},\alpha_{2}}\times \AA^{1}_{z}
\sim_{\AA^{1}}
\ast.
\end{equation} 
Here \eqref{equation:firstkindpolynomialprojection} is a flat $\AA^{2}$-fibration,
i.e., 
a flat surjection with general fibers isomorphic to $\AA^{2}$, 
restricting to a trivial $\AA^{2}$-bundle over $\AA^{1}_{x}\setminus\{0\}$ and $\mathcal{X}(n,\alpha_{i},p)$ is factorial 
(see e.g., \cite[Lemma 3.1]{MR3223879}).
The $\AA^{1}$-homotopy theory of deformed Koras-Russell $3$-folds of the first kind \eqref{equation:firstkindpolynomial} is essentially governed by \eqref{equation:firstkindpolynomialprojection} 
and \eqref{equation:firstkindpolynomialinverse}.
More generally,
we make the following definition.
\begin{defn}
\label{def:KRBundle}
Suppose $s(x)\in F[x]$ has positive degree and let $R(x,y,t)\in F[x,y,t]$.
Define the closed subscheme $\mathcal{X}(s,R)$ of $\AA^{1}_{x}\times\AA^{3}=\Spec(F[x][y,z,t])$ by the equation: 
\[
\{s(x)z=R(x,y,t)\}.
\]
We say that the projection map  
\begin{equation}
\label{equation:KRfiberbundleprojection}
\rho
:=
\pr_{x}:
\mathcal{X}(s,R)
\to
\AA^{1}_{x}
\end{equation} 
defines a {\it Koras-Russell fiber bundle }if
\begin{itemize}
\item[(a)] $\mathcal{X}(s,R)$ is a smooth scheme, and
\item[(b)] For every zero $x_{0}$ of $s(x)$, the zero locus in $\AA^2=\Spec(F[y,t])$ of the polynomial $R(x_0,y,t)$ is an integral rational plane curve with a unique place at infinity 
and at most unibranch singularities.
\end{itemize}
\end{defn}
\begin{remark}
\label{remark:KRaffinespace}
\aref{theorem:Kalimancriterion} and \aref{theorem:miyanishi's characterization} show that a Koras-Russell fiber bundle is isomorphic to $\AA^{3}$ if and only if for every zero $x_0$ of $s(x)$ 
the curve $\{R(x_0,y,t)=0\}$ is isomorphic to $\AA^{1}$.
For the theory of unibranch singularities, we refer to \cite{zbMATH03950697}.
\end{remark}

As examples of Koras-Russell fiber bundles we consider smooth affine $3$-folds with degenerate fibers.
For $1\leq i\leq m$, 
choose distinct linear forms $l_{i}(x)=(x-x_{i})\in F[x]$, 
$n_{i},\alpha_{i},\beta_{i}\geq 2$,
where $\alpha_{i}$ and $\beta_{i}$ are coprime, 
and $a\in F^{\times}$.
We define $\mathcal{X}_{m}(n_{i},\alpha_{i},\beta_{i},a)$ or simply $\mathcal{X}_{m}$ by the equation in $\AA^{4}$:
\begin{equation}
\label{equation:mdegeneratefibers}
\mathcal{X}_{m}
=
\mathcal{X}_{m}(n_{i},\alpha_{j},\beta_{j},a)
:=
\{\left(\prod_{i=1}^{m} l_{i}(x)^{n_i}\right)z
=
\sum_{i=1}^{m}\left(\left(\prod_{j\neq i}l_{j}(x)\right)(y^{\alpha_{i}}+t^{\beta_{i}})\right)+a\prod_{i=1}^{m}l_{i}(x)\}.
\end{equation}
In the case of two degenerate fibers, 
\eqref{equation:mdegeneratefibers} takes the form:
\begin{equation}
\label{equation:twodegeneratefibers}
\mathcal{X}_{2}
=
\{(x-x_{1})^{n_{1}}(x-x_{2})^{n_{2}}z
=
(x-x_{1})(y^{\alpha_{2}}+t^{\beta_{2}})
+
(x-x_{2})(y^{\alpha_{1}}+t^{\beta_{1}})
+
a(x-x_{1})(x-x_{2})\}.
\end{equation}
For all $m$ the Makar-Limanov invariant of $\mathcal{X}_{m}(n_{i},\alpha_{j},\beta_{j},a)$ equals $F[x]$ (see \aref{example:MLandDerksenInvariant}); 
hence we can distinguish it from $\AA^{3}$.
Moreover, 
the projection map
\begin{equation}
\label{equation:degeneratefirstkindpolynomialprojection}
\pr_{x}:
\mathcal{X}_{m}(n_{i},\alpha_{j},\beta_{j},a)
\to
\AA^{1}_{x}
\end{equation} 
defines a trivial $\AA^{2}$-bundle over the punctured affine line $\AA^{1}_{x}\setminus\{x_{1},\dots,x_{m}\}$.
Its fiber over the closed point $x_{i}\in\AA^{1}_{x}$ is isomorphic to the cylinder on the cuspidal curve $\Gamma_{\alpha_{i},\beta_{i}}:\{y^{\alpha_{i}}+t^{\beta_{i}}=0\}$.
By counting closed fibers non-isomorphic to $\AA^{2}$ we show $\mathcal{X}_{m}$ and $\mathcal{X}_{m'}$ are non-isomorphic when $m\neq m'$ (see \aref{lemma:mnonisomorphic}).
Thus the next result furnish an infinite family of stably $\AA^{1}$-contractible smooth affine schemes.

\begin{theorem}
\label{thm:KRBundle-Stable-A1Cont}
Suppose $\rho:\mathcal{X}(s,R)\to\AA^{1}_{x}$ is a Koras-Russell fiber bundle with basepoint the origin.
There are $\AA^{1}$-contractible suspensions: 
\[
\mathcal{X}(s,R)\wedge S^{1}\sim_{\AA^{1}}
\mathcal{X}(s,R)\wedge\PP^{1}\sim_{\AA^{1}}
\ast.
\] 
\end{theorem}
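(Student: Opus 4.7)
The plan is to realise $\mathcal{X}(s,R)$ as an affine modification of $\AA^3$ and to propagate the $\AA^1$-contractibility of $\AA^3$ through this modification via a comparison of cofiber sequences.

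First, observe that $\mathcal{X}(s,R)=\Spec F[x,y,t][R(x,y,t)/s(x)]$, which realises the projection $\pi:\mathcal{X}(s,R)\to \AA^3_{x,y,t}$ as the affine modification of $\AA^3$ along the principal divisor $D_0=V(s)$ with centre $C=V(s,R)$. Writing $x_1,\dots,x_m$ for the distinct zeros of $s$ and $\Gamma_i=V(R(x_i,y,t))\subset\AA^2_{y,t}$, hypothesis~(b) of \aref{def:KRBundle} gives $C=\bigsqcup_i \{x_i\}\times\Gamma_i$. The morphism $\pi$ is an isomorphism over $\AA^3\setminus D_0$, and its scheme-theoretic preimage of $C$ is the exceptional locus $E=C\times\AA^1_z$, which coincides with the union of the degenerate fibres of $\rho$. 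The Lin--Zaidenberg theorem for rational plane curves with a unique place at infinity and only unibranch singularities (cf.\ \aref{remark:KRaffinespace}) shows that each $\Gamma_i$ is $\AA^1$-contractible, so $C$ and $E$ are both $\AA^1$-equivalent to a discrete set of $m$ points.

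Next, I would compare the cofiber sequences associated with the open inclusions $\mathcal{X}(s,R)\setminus E\hookrightarrow\mathcal{X}(s,R)$ and $\AA^3\setminus D_0\hookrightarrow\AA^3$ via the map induced by $\pi$. The left-hand vertical is an $\AA^1$-equivalence because $\pi$ restricts there to an isomorphism. On the bottom, homotopy purity applied to the smooth closed immersion $D_0\hookrightarrow\AA^3$ identifies $\AA^3/(\AA^3\setminus D_0)$ with a wedge of Thom spaces $\bigvee_i \Th(N_{\{x_i\}\times\AA^2/\AA^3})$, each summand $\AA^1$-equivalent to $\PP^1$ since its base is $\AA^1$-contractible. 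The upper cofiber $\mathcal{X}(s,R)/(\mathcal{X}(s,R)\setminus E)$ resists direct treatment by homotopy purity because $E$ is singular along the cusps of the $\Gamma_i$. Instead, I would work Nisnevich-locally near each $x_i$, passing to a smooth factorisation of $\pi$ (for instance through the blow-up $\Bl_C \AA^3$), and use the $\AA^1$-contractibility of $\Gamma_i$ to identify the local contribution with a copy of $\PP^1$ as well.

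Combining these computations, the comparison of cofiber sequences yields, after a single $S^1$- or $\PP^1$-suspension, an $\AA^1$-equivalence $\Sigma\mathcal{X}(s,R)\sim_{\AA^1}\Sigma\AA^3\sim_{\AA^1}\ast$, from which both claims follow. The main obstacle will be the upper-cofiber computation: the singularity of $E$ rules out a direct use of homotopy purity, and the unibranch hypothesis in \aref{def:KRBundle}(b) must be used essentially to reduce the local geometry near each cusp to a smooth model whose Thom space matches the one appearing on the bottom. This step is the technical heart of the argument, and the suspension is genuinely needed: without it, one would have to control attaching data at the level of $\pi_1^{\AA^1}$, a much harder task not attempted in the present theorem.
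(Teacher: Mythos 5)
Your global set-up is sound and matches the paper's starting point: $\mathcal{X}(s,R)$ is indeed the affine modification of $\AA^{3}$ along $\ddiv(s)$ with center $V(s,R)$, the map $\pi$ is an isomorphism off the exceptional locus $E=C\times\AA^{1}_{z}$, and the bottom cofiber $\AA^{3}/(\AA^{3}\setminus D_{0})$ is a wedge of copies of $\PP^{1}$ by homotopy purity. But the step you yourself flag as ``the technical heart'' --- identifying $\mathcal{X}(s,R)/(\mathcal{X}(s,R)\setminus E)$ with the matching wedge of $\PP^{1}$'s compatibly with $\overline{\pi}$ --- is left as a gesture (``work Nisnevich-locally\dots pass to a smooth factorisation\dots use the $\AA^{1}$-contractibility of $\Gamma_{i}$''), and this is precisely where the real content of the theorem lives. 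The $\AA^{1}$-contractibility of the cuspidal curves $\Gamma_{i}$ does not yield the Thom-space identification: homotopy purity requires a \emph{smooth} closed subscheme, there is no general statement computing $\mathcal{X}/(\mathcal{X}\setminus E)$ for a singular $\AA^{1}$-contractible $E$, and $\Bl_{C}\AA^{3}$ is not a smooth model since $C$ itself is singular. If such a soft argument worked one would expect unsuspended contractibility to drop out, which is not what the theorem asserts.

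What the paper actually does at this point is quite heavy. It first uses the Lin--Zaidenberg theorem not merely to see that $\Gamma_{i}$ is contractible but to produce an automorphism of $\AA^{2}_{y,t}$ putting $\Gamma_{i}$ in the standard form $y^{\alpha_{1}}+t^{\alpha_{2}}$, so that Zariski-locally over $x_{i}$ the bundle is a deformed Koras--Russell $3$-fold $\mathcal{X}(n,\alpha_{i},p)$ (\aref{lem:KRBundle-loc-struct}). It then proves $\Sigma_{s}\mathcal{X}(n,\alpha_{i},p)\sim_{\AA^{1}}\ast$ (\aref{theorem:DefKR-StableA1-cont}) by combining: (i) the equivalence $\mathcal{X}(n,\alpha_{i},p)\setminus\AA^{1}_{z}\sim_{\AA^{1}}\AA^{2}\setminus\{(0,0)\}$, established via cyclic Galois covers and $\AA^{1}$-bundles over an algebraic space (\aref{lem:DefKR-Punctured-Weak-Equiv}); (ii) homotopy purity applied to the \emph{smooth} line $\AA^{1}_{z}$, not to the singular fiber; (iii) stable $\AA^{1}$-contractibility from higher Chow groups (\aref{corollary:deformed KR stable}); and (iv) a Milnor--Witt $K$-theory cohomology computation showing the relevant connecting map $(\PP^{1})^{\wedge 2}\to(\PP^{1})^{\wedge 2}$ is an equivalence. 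Only then does \aref{corollary:ThomSpacesIso} deliver the suspended identification of the local Thom space with $\PP^{1}$, after which the paper runs an induction on the number of singular fibers, removing one at a time by the affine modification of \aref{lem:KRBundle-modification}, rather than your one-shot comparison with $\AA^{3}$. None of the ingredients (i)--(iv) appear in your proposal, so as written the argument does not go through.
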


\begin{remark}
In general, 
$\AA^{1}$-weak equivalences do not desuspend, 
e.g., 
by homotopy purity we have $(\GG_{m}\vee\GG_{m})\wedge\PP^{1}\sim_{\AA^{1}} (\PP^{1}\setminus\{0,1,\infty\} = \Spec(F[t^{\pm 1},(1-t)^{-1}]))\wedge\PP^{1}$, 
but there is no $\AA^{1}$-weak equivalence between $\GG_{m}\vee\GG_{m}$ and $\PP^{1}\setminus\{0,1,\infty\}$ by \cite{zbMATH06617521}.
\aref{thm:KRBundle-Stable-A1Cont} does not allows us to conclude that $\mathcal{X}(s,R)$ is $\AA^{1}$-contractible.
\end{remark}

A systematic study of $\AA^{1}$-contractible varieties presents itself by noting that deformed Koras-Russell $3$-folds are examples of affine modifications \cite{zbMATH01286199}. 
It turns out that forming such "affine blow-ups" are intertwined with questions about $\AA^{1}$-contractibility.
This provides a useful technique for constructing families of $\AA^{1}$-contractible smooth affine varieties.
By way of example, 
we consider affine modifications of deformed Koras-Russell $3$-folds of the first kind \eqref{equation:firstkindpolynomial}.
This yields iterated Koras-Russell $3$-folds of the first kind defined in $\AA^{4}$ by 
\begin{equation}
\label{equation:iteratedfirstkindpolynomial}
\mathcal{Y}(m,n_{i},\alpha_{i},p)
:=
\{x^{n_{1}}z=
(x^{n_{2}}y+z^{m})^{\alpha_{1}}+t^{\alpha_{2}}+xp(x,x^{n_2}y+z^m,t)\},
\end{equation} 
where $m,n_{i},\alpha_{i}\geq 2$, $m\alpha_{1}$ and $\alpha_{2}$ are coprime, and $p(x,w,t)\in F[x,w,t]$ satisfies $p(0,0,0)\in F^{\ast}$. 
The projection map $\pr_{x}:\mathcal{Y}(m,n_{i},\alpha_{i},p)\to\AA^{1}_{x}$ has a unique singular fiber $\AA^{1}\times\Gamma_{m\alpha_{1},\alpha_{2}}$ at the origin and restricts to 
a trivial $\AA^{2}$-bundle over $\AA^{1}_{x}\setminus\{0\}$.
By \aref{theorem:Kalimancriterion} it follows that $\mathcal{Y}(m,n_{i},\alpha_{i},p)$ in \eqref{equation:iteratedfirstkindpolynomial} is not isomorphic to $\AA^{3}$.

\begin{theorem}
\label{thm:iteratedKR-A1Cont}
For every iterated Koras-Russell $3$-fold of the first kind \eqref{equation:iteratedfirstkindpolynomial} there are $\AA^{1}$-contractible suspensions: 
\[
\mathcal{Y}(m,n_{i},\alpha_{i},p)\wedge S^{1}\sim_{\AA^{1}}
\mathcal{Y}(m,n_{i},\alpha_{i},p)\wedge\PP^{1}\sim_{\AA^{1}}
\ast.
\]
\end{theorem}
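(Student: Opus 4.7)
The plan is to realize $\mathcal{Y}(m,n_i,\alpha_i,p)$ as an affine modification of a deformed Koras-Russell $3$-fold of the first kind and then reduce the statement to \aref{thm:KRBundle-Stable-A1Cont}. Let $\mathcal{X}' := \mathcal{X}(n_1, \alpha_i, p)$ denote the deformed Koras-Russell $3$-fold from \eqref{equation:firstkindpolynomial}, defined by $x^{n_1}z = y^{\alpha_1} + t^{\alpha_2} + xp(x,y,t)$. Performing the affine modification of $\mathcal{X}'$ along the principal divisor $\{x^{n_2}=0\}$ with center $V(x^{n_2}, y - z^m)$, I would introduce a new variable $y'$ satisfying $x^{n_2}y' = y - z^m$ and substitute $y = x^{n_2}y' + z^m$ into the defining equation of $\mathcal{X}'$. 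The resulting equation matches \eqref{equation:iteratedfirstkindpolynomial} verbatim (after renaming $y' \mapsto y$), giving a birational morphism $\phi: \mathcal{Y} \to \mathcal{X}'$ that restricts to an isomorphism over the open set $\{x \neq 0\} \subset \mathcal{X}'$.

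Next I would verify that $\mathcal{X}'$ satisfies \aref{def:KRBundle} with $s(x) = x^{n_1}$ and $R(x,y,t) = y^{\alpha_1} + t^{\alpha_2} + xp(x,y,t)$: smoothness follows from the Jacobian criterion using $p(0,0,0) \in F^{\times}$, while $\{R(0,y,t) = 0\} = \Gamma_{\alpha_1, \alpha_2}$ is an integral rational plane curve with a unique place at infinity and a unibranch cuspidal singularity at the origin, thanks to the coprimality of $\alpha_1$ and $\alpha_2$ (which follows from that of $m\alpha_1$ and $\alpha_2$). Consequently \aref{thm:KRBundle-Stable-A1Cont} yields $\mathcal{X}' \wedge S^1 \sim_{\AA^1} \mathcal{X}' \wedge \PP^1 \sim_{\AA^1} \ast$.

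The key remaining step is to transport stable $\AA^1$-contractibility from $\mathcal{X}'$ to $\mathcal{Y}$, and I would do this by analyzing the projection $\pr_x : \mathcal{Y} \to \AA^1_x$ directly rather than through $\phi$. As already observed in the text preceding the theorem, $\pr_x$ restricts to a trivial $\AA^2$-bundle over $\GG_m = \AA^1_x \setminus \{0\}$ and has unique singular fiber $\AA^1_y \times \Gamma_{m\alpha_1, \alpha_2}$ over $0$; the latter is $\AA^1$-contractible because $\gcd(m\alpha_1, \alpha_2) = 1$ makes the Brieskorn-Pham curve $\Gamma_{m\alpha_1, \alpha_2}$ itself $\AA^1$-contractible. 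This is precisely the fibered pattern exploited in the proof of \aref{thm:KRBundle-Stable-A1Cont} — a trivial $\AA^2$-bundle over the generic locus of the base together with $\AA^1$-contractible cylindrical degenerate fibers — so the Nisnevich/Mayer-Vietoris argument used there (splitting $\AA^1_x$ as $\GG_m$ together with a Henselian neighborhood of $0$ and invoking $\AA^1$-homotopy invariance) should apply \emph{mutatis mutandis}. The hard part will be confirming that the proof of \aref{thm:KRBundle-Stable-A1Cont} depends only on this bundle-theoretic structure of $\pr_x$ and not on the specific form $s(x)z = R(x,y,t)$ of the ambient equation, since $\mathcal{Y}$'s defining equation does involve $z$ on the right-hand side; once this point is verified, the two claimed $\AA^1$-contractible suspensions follow at once.
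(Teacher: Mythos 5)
Your overall strategy is in the right spirit, but the central step is exactly the one you defer, and it does not go through as stated. The reason \aref{thm:iteratedKR-A1Cont} needs its own proof in the paper is that $\mathcal{Y}(m,n_{i},\alpha_{i},p)$ is \emph{not} a Koras-Russell fiber bundle: \aref{def:KRBundle} requires a defining equation $s(x)z=R(x,y,t)$ with $R$ independent of $z$, and the right-hand side of \eqref{equation:iteratedfirstkindpolynomial} involves $z$ through $(x^{n_{2}}y+z^{m})^{\alpha_{1}}$. Your hope that the proof of \aref{thm:KRBundle-Stable-A1Cont} "depends only on the bundle-theoretic structure of $\pr_{x}$" is not borne out: that proof is not a Nisnevich/Mayer-Vietoris argument over a Henselian neighborhood of $0$, but an induction on singular fibers via affine modifications and excision whose base case is \aref{theorem:DefKR-StableA1-cont}, and \emph{that} rests on \aref{lem:DefKR-Punctured-Weak-Equiv} — the statement that removing the line $\AA^{1}_{z}$ leaves something $\AA^{1}$-weakly equivalent to $\AA^{2}\setminus\{(0,0)\}$. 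The latter is proved by an explicit cyclic Galois cover of $\AA^{2}_{x,t}$ of order $\alpha_{1}$, a decomposition of the preimage of the cuspidal curve into components $\{y-\varepsilon^{i}u=0\}$, and descent along an algebraic-space quotient $\mathfrak{S}(\alpha_{1},\alpha_{2})\cong\mathfrak{S}(\alpha_{1},1)$. This uses the explicit equation, not just the fact that the general fiber is $\AA^{2}$ and the special fiber is a contractible cylinder; contractibility of the singular fiber alone does not yield the needed identification of the connecting map $(\PP^{1})^{\wedge 2}\to\Sigma_{s}(\mathcal{Y}\setminus\AA^{1}_{y})$.

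What the paper actually does is rerun this analysis for $\mathcal{Y}$ from scratch: it excises the line $\AA^{1}_{y}=\{x=z=t=0\}$ (note the roles of $y$ and $z$ are interchanged relative to the deformed case), builds a cyclic cover of order $m\alpha_{1}$ via $(u,v)\mapsto(-u^{m\alpha_{1}}-v^{\alpha_{2}},v)$, identifies the components of the preimage of $\Gamma_{m\alpha_{1},\alpha_{2}}$ as $\{z-\varepsilon^{i}u=0\}$, and descends to $\mathfrak{S}(m\alpha_{1},\alpha_{2})=\mathfrak{S}(m\alpha_{1},1)$ to get $\mathcal{Y}\setminus\AA^{1}_{y}\sim_{\AA^{1}}\AA^{2}\setminus\{(0,0)\}$; it then shows the connecting map is an equivalence using the stable contractibility of $\mathcal{Y}$ itself (\aref{corollary:iterated stable}, proved via \aref{theorem:stableA1affinemodification} from the affine modification structure you correctly identified). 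Note also that you cannot shortcut this by applying \aref{theorem:A1affinemodifications} to the modification $\mathcal{Y}\to\mathcal{X}(n_{1},\alpha_{i},p)$, since the divisor $\{x=0\}\cong\Gamma_{\alpha_{1},\alpha_{2}}\times\AA^{1}_{z}$ is singular and the triple is not smooth. So your proposal correctly assembles the inputs ($\mathcal{Y}$ as an affine modification, the fiber structure of $\pr_{x}$, the contractibility of $\Gamma_{m\alpha_{1},\alpha_{2}}$) but the reduction to \aref{thm:KRBundle-Stable-A1Cont} fails at the definitional level, and the substantive geometric work — the analogue of \aref{lem:DefKR-Punctured-Weak-Equiv} for $\mathcal{Y}$ — is missing.
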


Inspired by these $3$-folds and the classical Brieskorn-Pham surfaces \cite{zbMATH03305711} 
\begin{equation}
\label{equation:BPsurface}
\mathcal{S}_{\alpha_{i}}
:=
\{y^{\alpha_{1}}_{1}+y^{\alpha_{2}}_{2}+y^{\alpha_{3}}_{3}=0\},
\end{equation}
for pairwise coprime integers $\alpha_{i}\geq 2$, 
we form the smooth affine modification 
\begin{equation}
\label{equation:4dimensional}
\mathcal{Y}(n,\alpha_{i},p)
:=
\{x^{n}z=
y^{\alpha_{1}}_{1}+y^{\alpha_{2}}_{2}+y^{\alpha_{3}}_{3}+xp(x,y_{1},y_{2},y_{3})\},
\end{equation} 
where $n\geq 2$, and $p(x,y_{1},y_{2},y_{3})\in F[x,y_{1},y_{2},y_{3}]$ satisfies $p(0,0,0,0)\in F^{\ast}$.
The $4$-fold \eqref{equation:4dimensional} is not an affine space since it has a nontrivial Makar-Limanov invariant (see e.g., \cite[Proposition 11.1]{MR2327241}).
In a special case we show \eqref{equation:4dimensional} is "stably $\AA^{1}$-contractible" in the following sense:
\begin{theorem}
\label{thm:4dimensional-A1Cont}
If $\alpha_{3}=m\alpha_{1}\alpha_{2}+1$, $m>0$, then the smooth affine $4$-fold $\mathcal{Y}(n,\alpha_{i},p)$
in \eqref{equation:4dimensional} becomes $\AA^{1}$-contractible after a finite suspension with the projective line $\PP^{1}$ pointed at infinity:
\[
\mathcal{Y}(n,\alpha_{i},p)\wedge \PP^{1}\wedge \cdots \wedge \PP^{1}\sim_{\AA^{1}}\ast.
\]
\end{theorem}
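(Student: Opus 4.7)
The plan is to combine two ingredients: an affine-modification presentation of $\mathcal{Y}(n,\alpha_{i},p)$ over $\AA^{1}_{x}$, and the stable $\AA^{1}$-contractibility of the Brieskorn-Pham surface $\mathcal{S}_{\alpha_{i}}$ under the hypothesis $\alpha_{3}=m\alpha_{1}\alpha_{2}+1$.

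First I would realize $\mathcal{Y}(n,\alpha_{i},p)$ as the affine modification of $\AA^{4}=\Spec F[x,y_{1},y_{2},y_{3}]$ along the principal divisor $\{x^{n}=0\}$, with center the codimension-two subscheme $Z=V(x,\,y_{1}^{\alpha_{1}}+y_{2}^{\alpha_{2}}+y_{3}^{\alpha_{3}}+xp)$, obtained by adjoining $z=f/x^{n}$ with $f=y_{1}^{\alpha_{1}}+y_{2}^{\alpha_{2}}+y_{3}^{\alpha_{3}}+xp$. Since $p(0,0,0,0)\in F^{\ast}$, specialization at $x=0$ identifies $Z$ with the Brieskorn-Pham surface $\mathcal{S}_{\alpha_{i}}$. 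The projection $\pr_{x}\colon\mathcal{Y}(n,\alpha_{i},p)\to\AA^{1}_{x}$ is a trivial $\AA^{3}$-bundle over $\GG_{m}$ (solve $z=f/x^{n}$), with unique singular fiber $\mathcal{S}_{\alpha_{i}}\times\AA^{1}_{z}$ over $x=0$.

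Second, I would run the Nisnevich-distinguished-square argument underlying \aref{thm:KRBundle-Stable-A1Cont}, now one dimension higher. Covering $\AA^{1}_{x}$ by $\GG_{m}$ and an \'etale neighborhood of the origin yields a homotopy pushout presentation of $\mathcal{Y}(n,\alpha_{i},p)$ in which the open piece $\pr_{x}^{-1}(\GG_{m})\cong\GG_{m}\times\AA^{3}$ is $\AA^{1}$-equivalent to $\GG_{m}$, and the closed piece $\AA^{1}$-retracts onto the singular fiber $\mathcal{S}_{\alpha_{i}}\times\AA^{1}_{z}\sim_{\AA^{1}}\mathcal{S}_{\alpha_{i}}$. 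After smashing with enough copies of $\PP^{1}$, the $\GG_{m}$ and gluing contributions collapse, so stable $\AA^{1}$-contractibility of $\mathcal{Y}(n,\alpha_{i},p)$ reduces to the stable $\AA^{1}$-contractibility of $\mathcal{S}_{\alpha_{i}}$.

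Third, I would show that when $\alpha_{3}=m\alpha_{1}\alpha_{2}+1$ the Brieskorn-Pham surface $\mathcal{S}_{\alpha_{i}}$ becomes $\AA^{1}$-contractible after finitely many $\PP^{1}$-suspensions. This is the congruence—classically due to Brieskorn—that makes $\mathcal{S}_{\alpha_{i}}$ diffeomorphic to $\RR^{4}$, and I would aim to reflect it motivically by exhibiting $\mathcal{S}_{\alpha_{i}}$ as an iterated affine modification of $\AA^{2}$ along the cuspidal curve $\Gamma_{\alpha_{1},\alpha_{2}}=\{y_{1}^{\alpha_{1}}+y_{2}^{\alpha_{2}}=0\}$. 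The unibranch singularity and unique place at infinity of $\Gamma_{\alpha_{1},\alpha_{2}}$ match the center hypotheses of \aref{def:KRBundle}, so the surface-level analogue of the argument proving \aref{thm:KRBundle-Stable-A1Cont} furnishes the required $\PP^{1}$-desuspensions.

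The principal obstacle is precisely this last step, and it accounts for the restriction $\alpha_{3}=m\alpha_{1}\alpha_{2}+1$: one must convert the numerical congruence into an explicit affine-modification description of $\mathcal{S}_{\alpha_{i}}$ compatible with the Koras-Russell fiber bundle framework of Section~1. Without the congruence, neither Brieskorn's classical picture nor the affine-modification techniques developed here are available, and the stable $\AA^{1}$-homotopy type of $\mathcal{S}_{\alpha_{i}}$ remains inaccessible by the methods of this paper. The reduction in Steps~1--2 is otherwise a routine extension, one dimension higher, of the argument for \aref{thm:KRBundle-Stable-A1Cont}.
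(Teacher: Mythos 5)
Your reduction in Steps 1--2 is essentially the paper's: $\mathcal{Y}(n,\alpha_{i},p)$ is the affine modification of $\AA^{4}$ along $\ddiv(x^{n})$ with reduced center the Brieskorn-Pham surface $\mathcal{S}_{\alpha_{i}}$, and \aref{theorem:stableA1affinemodification} reduces stable $\AA^{1}$-contractibility to showing that $\mathcal{S}_{\alpha_{i}}\times\mathcal{Y}\to\mathcal{Y}$ induces an isomorphism on higher Chow groups for every smooth affine $\mathcal{Y}$. (One quibble: the mechanism is not that "the closed piece $\AA^{1}$-retracts onto the singular fiber" --- no such retraction is available --- but rather the localization sequence in higher Chow groups in which the exceptional divisor of the modification is an affine bundle over the center; this is exactly what \aref{theorem:stableA1affinemodification} packages.)

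The genuine gap is your Step 3, and you acknowledge it yourself: you never establish the higher-Chow/stable statement for $\mathcal{S}_{\alpha_{1},\alpha_{2},m\alpha_{1}\alpha_{2}+1}$, which is where all the content and the hypothesis $\alpha_{3}=m\alpha_{1}\alpha_{2}+1$ live. Your proposed route --- exhibiting $\mathcal{S}_{\alpha_{i}}$ as an iterated affine modification of $\AA^{2}$ along $\Gamma_{\alpha_{1},\alpha_{2}}$ and running a "surface-level analogue" of \aref{thm:KRBundle-Stable-A1Cont} --- is not carried out and is not obviously workable: the equation $y_{1}^{\alpha_{1}}+y_{2}^{\alpha_{2}}+y_{3}^{\alpha_{3}}=0$ is not linear in any variable, so $\mathcal{S}_{\alpha_{i}}$ is not visibly of the form $\{fw=g\}$ over $\AA^{2}$, and the Koras-Russell fiber bundle machinery (which is built for threefolds fibered in planes over $\AA^{1}_{x}$) does not directly apply to a surface. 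What the paper actually does is bypass affine modifications of the surface entirely: the congruence $\alpha_{3}=m\alpha_{1}\alpha_{2}+1$ is used to write down the explicit birational morphism $\AA^{2}_{u,v}\to\mathcal{S}_{\alpha_{i}}$,
\[
(u,v)\mapsto\bigl(u(-u^{\alpha_{1}}-v^{\alpha_{2}})^{m\alpha_{2}},\,v(-u^{\alpha_{1}}-v^{\alpha_{2}})^{m\alpha_{1}},\,-u^{\alpha_{1}}-v^{\alpha_{2}}\bigr),
\]
with inverse $(x,y,z)\mapsto(x/z^{m\alpha_{2}},y/z^{m\alpha_{1}})$, which restricts to an isomorphism $\AA^{2}\setminus\Gamma_{\alpha_{1},\alpha_{2}}\cong\mathcal{S}_{\alpha_{i}}\setminus\{y_{3}=0\}$. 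Feeding this, together with the identification of the fiber $\{y_{3}=0\}$ with the cuspidal curve $\Gamma_{\alpha_{1},\alpha_{2}}$ and the known computation \cite[Proposition 3.2]{MR3549169}, into the localization sequence for $\pr_{y_{3}}:\mathcal{S}_{\alpha_{i}}\to\AA^{1}$ and applying the five lemma twice gives the required isomorphism on higher Chow groups. Without this (or an equivalent) explicit input, the proof is incomplete at its decisive step.
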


\subsection*{Relation to other works}
Asok-Doran produced families of non-isomorphic smooth quasi-affine but not affine $n$-dimensional varieties for $n\geq 4$ that are $\AA^{1}$-contractible \cite{MR2335246}.
Their main technique,
tracing back to classical contractibility results for PL and smooth manifolds, 
is to form quotients of affine spaces by free actions of unipotent groups.
Inspired by Asok's lectures \cite{AsokOttawa} and connections to the Zariski cancellation problem, 
\cite{MRDF} and \cite{MR3549169} established $\AA^{1}$-contractibility results for smooth affine Koras-Russell $3$-folds of the first and second kind. 
In this paper we open a systematic study of such examples by utilizing the geometric technique of affine modifications to craft accessible and widely applicable proofs of $\AA^{1}$-contractibility. 
This represents a concise version of the theory that strengthens the results in \cite{MRDF} and \cite{MR3549169}, presents new insights, and opens new lines of inquiry.

\subsection*{Outline}
In \aref{section:background} we review background on affine algebraic geometry and $\AA^{1}$-homotopy theory.
We review and give examples of affine modifications. 
Related to $\AA^{1}$-contractibility we study $\AA^{1}$-chain connectedness  
--- an algebro-geometric analogue of path connectedness introduced by Asok-Morel \cite{MR2803793} ---
which loosely speaking means that any two points can be connected by the images of a chain of maps from the affine line.
There are many elementary open problems related to these notions;
we record a few sample questions which we believe warrant further investigation.

\aref{section:hcgc} improves on the stable $\AA^{1}$-contractibility results in \cite{MR3549169} by producing several new families of examples.
This is achieved by explicit calculations of Bloch's higher Chow groups \cite{zbMATH03983347} or equivalently of motivic cohomology in the sense of 
Suslin-Voevodsky \cite{zbMATH01445142}, \cite{zbMATH01724017}.
Using the techniques of stable motivic homotopy theory allow us to conclude $\AA^{1}$-contractibility after a finite number of suspensions with $\PP^{1}$.  
For an elucidation of these techniques and further references we refer to \cite{MR3549169}.

Our main results are shown in \aref{section:unstable}.
The proof of \aref{thm:KRBundle-Stable-A1Cont} proceeds by induction on the number of singular fibers of \eqref{equation:KRfiberbundleprojection}.  
For the induction step we invoke the geometric construction of an affine modification along a divisor in the sense of Kaliman-Za{\u\i}denberg \cite{zbMATH01286199},
a notion tracing back to \cite{MR0206035} and \cite{MR0008468}.
Moreover,
by homotopy purity for closed embeddings \cite[Theorem 3.2.23]{MR1813224} and excision over open neighborhoods of the origin, 
we reduce the proof to the special case of deformed Koras-Russell $3$-folds of the first kind \eqref{equation:firstkindpolynomial}.
As alluded to above an explicit calculation shows $\mathcal{X}(n,\alpha_{i},p)\to\Spec(F)$ induces an isomorphism on higher Chow groups;
this generalizes \cite[Proposition 3.3]{MR3549169}. 
Combined with the results in \cite[\S4]{MR3549169} we conclude there is an $\AA^{1}$-weak equivalence:
\begin{equation}
\label{equation:stableA1KRbundles}
\mathcal{X}(n,\alpha_{i},p)\wedge \PP^{1}\wedge \cdots \wedge \PP^{1}\sim_{\AA^{1}}\ast.
\end{equation}

To understand the geometry of the $3$-fold $\mathcal{X}(n,\alpha_i,p)$  we study its $\GG_a$-action corresponding to the locally nilpotent derivation 
$$
\partial
=
x^{n}\frac{\partial}{\partial y}+(\alpha_{1}y^{\alpha_{1}-1}+x\frac{\partial}{\partial y}p(x,y,t))\frac{\partial}{\partial z}
$$ 
on the coordinate ring of $\mathcal{X}(n,\alpha_i,p)$, 
with fixed point locus the affine line $\{x=y=t=0\}\cong\AA^{1}_z$. 
The geometric quotient $\mathcal{X}(n,\alpha_i,p)\to \mathcal{X}(n,\alpha_i,p)/\GG_a$ yields an 
$\AA^{1}$-bundle $\mathcal{X}(n,\alpha_i,p)\setminus \AA^{1}_z\to\mathfrak{S}(\alpha_1,\alpha_2)$ in the category of algebraic spaces \cite{zbMATH03350017}.
We note there exists a factorization
\begin{equation}
\label{equation:asfactorization}
\xymatrix{
\mathcal{X}(n,\alpha_{i},p)\setminus \AA^{1}_{z}  \ar[dr] \ar[rr]^-{\pi_{\vert}} & & \AA^{2}_{x,t}\setminus\{(0,0)\} \\
& \mathfrak{S}(\alpha_{1},\alpha_{2}), \ar[ur] &
}
\end{equation}
where $\pi_{\vert}$ is the restriction of $\pi:=\pr_{x,t}:\mathcal{X}(n,\alpha_i,p)\to \AA^{2}_{x,t}$ to $\mathcal{X}(n,\alpha_i,p)\setminus \AA^{1}_z$.
To construct \eqref{equation:asfactorization} we form a cyclic Galois cover of $\AA^{2}_{x,t}$ of order $\alpha_{1}$ and consequently of $\mathcal{X}(n,\alpha_{i},p)$ by pullback via $\pi$.
The maps arise as geometric quotients for $\mu_{\alpha_{1}}$-equivariant maps by gluing copies of $\AA^{2}\setminus\{(0,0)\}$ by the identify away from a family of cuspidal curves.
Here $\mathcal{X}(n,\alpha_{i},p)\setminus \AA^{1}_{z}\to\mathfrak{S}(\alpha_{1},\alpha_{2})$ is an \'etale locally trivial $\AA^{1}$-bundle.
We have $\mathfrak{S}(\alpha_{1},\alpha_{2})\cong\mathfrak{S}(\alpha_{1},1)$, 
and both of the projection maps for the smooth quasi-affine $4$-fold 
\begin{equation}
\label{equation:asfiberproduct}
(\mathcal{X}(n,\alpha_{i},p)\setminus \AA^{1}_{z})\times_{\mathfrak{S}(\alpha_{1},\alpha_{2})} (\mathcal{X}(n,\alpha_{1},1,p)\setminus \AA^{1}_{z})
\end{equation}
are Zariski  locally trivial $\AA^{1}$-bundles, 
and hence $\AA^{1}$-weak equivalences.
The fiber product in \eqref{equation:asfiberproduct} is formed in algebraic spaces over the punctured affine plane $\AA^{2}_{x,t}\setminus\{(0,0)\}$.
Furthermore, 
the projection $\pr_{x}:\mathcal{X}(n,\alpha_{1},1,p)\to\AA^{1}_{x}$ is a trivial $\AA^{2}$-bundle.
Hence $\mathcal{X}(n,\alpha_{1},1,p)\cong\AA^{3}_{x,y,z}$ and we obtain: 
\[
\mathcal{X}(n,\alpha_{i},p)\setminus \AA^{1}_{z}
\sim_{\AA^{1}}
\mathcal{X}(n,\alpha_{1},1,p)\setminus \AA^{1}_{z}
\sim_{\AA^{1}}
\AA^{2}_{x,t}\setminus\{(0,0)\}.
\]
Combined with an explicit calculation of cohomology with Milnor-Witt $K$-theory sheaves using \eqref{equation:stableA1KRbundles} we conclude $\pi_{\vert}$ is an $\AA^{1}$-weak equivalence.

The projection $\pi$ gives rise to a commutative diagram of cofiber sequences:
\begin{equation}
\label{equation:excisiondiagram}
\xymatrix{
\mathcal{X}(n,\alpha_{i},p)\setminus \AA^{1}_{z}  \ar[r]\ar[d]_{\pi_{\vert}} & 
\mathcal{X}(n,\alpha_{i},p) \ar[r]\ar[d]_{\pi} &
\mathcal{X}(n,\alpha_{i},p)/(\mathcal{X}(n,\alpha_{i},p)\setminus \AA^{1}_{z})\sim_{\AA^{1}} (\AA^{1}_{z})_{+}\wedge(\PP^{1})^{\wedge2}  \ar[d]_{\sim_{\AA^{1}}} \\ 
\AA^{2}_{x,t}\setminus\{(0,0)\}  \ar[r] & \AA^{2}_{x,t} \ar[r] & \AA^{2}_{x,t}/\AA^{2}_{x,t}\setminus\{(0,0)\}\sim_{\AA^{1}}(\PP^{1})^{\wedge2}.
}
\end{equation}
The rightmost vertical map in \eqref{equation:excisiondiagram} is an $\AA^{1}$-weak equivalence according to homotopy purity for closed embeddings \cite[Theorem 3.2.23]{MR1813224}.
By continuing \eqref{equation:excisiondiagram} into a long cofiber sequence we find that the simplicial suspension of $\mathcal{X}(n,\alpha_{i},p)$ is $\AA^{1}$-contractible:
\begin{equation}
\label{equation:simplicialcontractible}
\Sigma_{s}\mathcal{X}(n,\alpha_{i},p)
:=
\mathcal{X}(n,\alpha_{i},p)\wedge S^{1}\sim_{\AA^{1}}\ast.
\end{equation}
This finishes the proof of \aref{thm:KRBundle-Stable-A1Cont} since $\PP^{1}\sim_{\AA^{1}}\GG_{m}\wedge S^{1}$ by the standard open covering of the projective line.
\medskip

In conclusion we note a family of $\mathbb{A}^{1}$-contractible $3$-folds with an $\mathbb{A}^{1}$-contractible total space.
Let us fix $n\geq4$, 
coprime integers $\alpha_{1}$, $\alpha_{2}\geq2$, 
and set $\mathcal{Y}:=\mathrm{Spec}(F[a_{2},\ldots,a_{n-1}])\simeq\mathbb{A}_{F}^{n-2}$.
With these conventions we define the subscheme $\mathfrak{X}\subset \mathcal{Y}\times\mathbb{A}_{F}^{4}$ by the equation: 
\begin{equation}
\label{equation:smoothfamily}
\mathfrak{X}
:=
\{x^{n}z
=
y^{\alpha_{1}}+t^{\alpha_{2}}+x+x^{2}+\sum_{i=2}^{n-1}a_{i}x^{i+1}\}.
\end{equation}
Here $\pr_{\mathcal{Y}}:\mathfrak{X}\to \mathcal{Y}$ is a smooth family whose fibers are $\mathbb{A}^{1}$-contractible and non-isomorphic to $\AA^{3}$ over 
the corresponding residue fields \cite{MRDF}. 
Furthermore, 
the fibers of $\pr_{\mathcal{Y}}$ over the $F$-rational points of $\mathcal{Y}$ are pairwise non-isomorphic $F$-schemes which are all $\mathbb{A}^{1}$-stably isomorphic. 
We claim the total space $\mathfrak{X}$ of this family is $\mathbb{A}^{1}$-contractible. 
Indeed, 
rewriting the defining equation \eqref{equation:smoothfamily} for $\mathfrak{X}$ on the form 
\[
\{x^{3}(-a_{2}-\sum_{i=3}^{n-1}a_{i}x^{i-2}+x^{n-3}z)=y^{\alpha_{1}}+t^{\alpha_{2}}+x(1+x)\},
\]
and noting that $Z=-a_{2}-\sum_{i=3}^{n-1}a_{i}x^{i-2}+x^{n-3}z$ is a variable of $F[x][a_{2},a_{3},\ldots,a_{n-1},z]$ in the sense that $F[x][a_{2},a_{3},\ldots,a_{n-1},z]=F[x][Z,a_{3},\ldots,a_{n-1},z]$,
we see that $\mathfrak{X}$ is isomorphic to the product of $\mathbb{A}_{F}^{n-2}=\mathrm{Spec}(F[a_{3},\ldots,a_{n-1},z])$ with the deformed Koras-Russell threefold: 
\[
\mathcal{X}(3,\alpha_{1},\alpha_{2},(1+x))
:=
\{x^{3}Z=y^{\alpha_{1}}+t^{\alpha_{2}}+x(1+x)\subset\mathrm{Spec}\left(F[x,Z,y,t]\right)\}.
\]
Since $\mathcal{X}(3,\alpha_{1},\alpha_{2},(1+x))$ is $\mathbb{A}^{1}$-contractible \cite{MRDF},
so is $\mathfrak{X}$ by homotopy invariance.
\medskip

Throughout the paper we work over an algebraically closed field $F$ of characteristic zero.  
We write $\Sch_{F}$ and $\Sm_{F}$ for the categories of separated and separated smooth schemes of finite type over $F$.

\section{Background in affine algebraic geometry and $\AA^{1}$-homotopy theory}
\label{section:background}

\subsection{Affine modifications}
\label{subsection:affinemodifications}
Our main technique for generating examples of $\AA^{1}$-contractible smooth affine schemes involves affine modifications.
This construction is ubiquitous, 
e.g., 
every affine birational map between integral affine schemes of finite type arises from an affine modification see \cite[Theorem 1.1]{zbMATH01286199}. 
An affine modification is a birational construction which also makes sense for non-affine schemes \cite{Duboulozaffinemodification}:

\begin{defn}
\label{defn:affinemodification}
Suppose $(\mathcal{Z}\subset \mathcal{D}\subset \mathcal{X})$ be a triple consisting of an effective Cartier divisor $\mathcal{D}$ on an integral scheme $\mathcal{X}$ with sheaf 
of rational functions $\mathcal{K}_{\mathcal{X}}$ and a closed subscheme $\mathcal{Z}$ with ideal sheaf  $\II_{\mathcal{Z}}\subset\mathscr{O}_{\mathcal{X}}(-\mathcal{D})$. 
Then the affine modification of $\mathcal{X}$ along $\mathcal{D}$ with center in $\mathcal{Z}$ is the affine $\mathcal{X}$-scheme 
$$
\pi_{\mathcal{D},\mathcal{Z}}:\widetilde{\mathcal{X}}(\mathcal{D},\mathcal{Z})=\Spec(\mathcal{O}_{\mathcal{X}}[\II_{\mathcal{Z}}/\mathcal{D}])\to \mathcal{X},
$$
where $\mathcal{O}_{\mathcal{X}}[\II_{\mathcal{Z}}/\mathcal{D}]$ denotes the quotient of the Rees algebra 
$$
\mathcal{R}(\II_{\mathcal{Z}}\otimes\mathcal{O}_{\mathcal{X}}(\mathcal{D}))
=
\bigoplus_{n\geq0}(\II_{\mathcal{Z}}\otimes\mathcal{O}_{\mathcal{X}}(\mathcal{D}))^{n}t^{n}\subset\mathcal{K}_{\mathcal{X}}[t]
$$ 
of the fractional ideal $\II_{\mathcal{Z}}\otimes\mathcal{O}_{\mathcal{X}}(\mathcal{D})\subset\mathcal{K}_{\mathcal{X}}$ by the ideal generated by $1-t$.
\end{defn}

Via the canonical open immersion 
\[
j:
\tilde{\mathcal{X}}(\mathcal{D},\mathcal{Z})
\hookrightarrow
\mathrm{Proj}_{\mathcal{X}}\left(\mathcal{R}(\II_{\mathcal{Z}}\otimes\mathcal{O}_{\mathcal{X}}(\mathcal{D}))\right)
\simeq
\mathrm{Proj}_{\mathcal{X}}\left(\mathcal{R}(\II_{\mathcal{Z}}\right)),
\]
the affine modification $\widetilde{\mathcal{X}}(\mathcal{D},\mathcal{Z})$ coincides with the complement in the blow-up 
\[
\sigma_{\mathcal{Z}}:\Bl_{\mathcal{Z}}\mathcal{X}
=
\mathrm{Proj}_{\mathcal{X}}\left(\mathcal{R}(\II_{\mathcal{Z}})\right)\to \mathcal{X}
\] 
of $\mathcal{X}$ with center $\mathcal{Z}$ of the proper transform
$$
\mathcal{D}^{pr}:=
\mathrm{Proj}_{\mathcal{X}}\left(\mathcal{R}(\II_{\mathcal{Z}}\otimes\mathcal{O}_{\mathcal{X}}(\mathcal{D}))/t\right)\subset\Bl_{\II_{\mathcal{Z}}}\mathcal{X}
$$ 
of the divisor $\mathcal{D}$. 
The proper transform $\mathcal{D}^{pr}$ of $\mathcal{D}$ is in general different from its strict transform $\mathcal{D}'$, 
i.e., 
the closure of $\sigma^{-1}_{\mathcal{Z}}(\mathcal{D}\setminus \mathcal{Z}))$ in $\Bl_{\mathcal{Z}}\mathcal{X}$.
However, 
these two schemes coincide when $\mathcal{Z}$ is a local complete intersection in $\mathcal{D}$ \cite[Proposition 1.10]{Duboulozaffinemodification}.

Letting $\mathcal{E}:=\sigma_{\mathcal{Z}}^{-1}(\mathcal{Z})$ be the exceptional divisor of the blow-up $\sigma_{\mathcal{Z}}$, 
we note that $\sigma_{\mathcal{Z}}$ restricts to an isomorphism $\widetilde{\mathcal{X}}(\mathcal{D},\mathcal{Z})\setminus \mathcal{E} \overset{\cong}{\to}  \mathcal{X}\setminus \mathcal{D}$. 
We refer to 
\[
\mathcal{E}_{\mathcal{Z}/\mathcal{D}}:=\widetilde{\mathcal{X}}(\mathcal{D},\mathcal{Z})\cap \mathcal{E}=\mathcal{E}\setminus \mathcal{D}^{pr}
\]
as the exceptional fiber of the affine modification $\pi_{\mathcal{D},\mathcal{Z}}:\widetilde{\mathcal{X}}(\mathcal{D},\mathcal{Z})\to \mathcal{X}$.

For an integral affine scheme $\mathcal{X}=\Spec(A)$ and $f\neq 0$ an element in an ideal $\mathcal{I}\subset A$, 
the affine modification of $\mathcal{X}$ along the principal divisor $\ddiv(f)$ with center $V(\mathcal{I})$ is isomorphic to the spectrum of the sub-$A$-algebra 
\[
A[\mathcal{I}/f]
\cong
A[t\mathcal{I}]/(1-tf)
\cong
\{a/f^{k}\,\vert\, a\in \mathcal{I}^{k}, k\geq 0\}
\subset
A_{f}.
\] 
as defined in \cite{zbMATH01286199}.
We denote the exceptional divisor of an affine modification of an integral affine scheme by $\mathcal{E}_{\mathcal{I}/f}$.

\begin{example} (See \cite{MR0206035}.)
Suppose $f,a_{1},\dots,a_{n}$ is a regular sequence generating the ideal of the closed subscheme $\mathcal{Z}\subset\AA^{m}=\Spec(F[x_{1},\dots,x_{m}])$.
Then $\widetilde{\AA^{n}}(\ddiv(f),\mathcal{Z})$ is isomorphic to the subscheme of $\AA^{m+n}$ defined by the equations:
\[
\{a_{i}(x_{1},...,x_{m})-t_if(x_{1},...,x_{m})=0\}_{1\leq i\leq n}
\subset 
\AA^{m+n}
=
\Spec(F[t_1,\dots,t_n,x_{1},\dots,x_{m}]).
\]
\end{example}
\begin{example}
The following smooth schemes from \aref{sec:intro} are examples of affine modifications.
Here we identify the coordinate ring of $\mathcal{X}(n,\alpha_{i},p)$ with the subalgebra $F[x,y,t,x^{-n}(y^{\alpha_{1}}+t^{\alpha_{2}}+xp(x,y,t))]$ of $F[x^{\pm 1},y,t]$, 
i.e., 
$\mathcal{X}(n,\alpha_{i},p)$ is the affine modification $\pr_{x,y,t}: \mathcal{X}(n,\alpha_{i},p)\to\AA^{3}_{x,y,t}$ of $\Spec(F[x,y,t])$ along the principal divisor $\ddiv(x^{n})$ 
with center $\mathcal{Z}$ defined by the ideal $(x^{n},y^{\alpha_{1}}+t^{\alpha_{2}}+xp(x,y,t))$.
That is, 
$\mathcal{X}(n,\alpha_{i},p)$ is isomorphic to the complement of the proper transform of the divisor $n\{x=0\}$ in the blowup of $\AA^{3}$ with center $\mathcal{Z}$.
Similar reasoning applies to the other examples in \aref{table:1}.
\vspace{-0.1in}
\begin{table}[h!]
\begin{center}
\begin{tabular}{ | l | | l | l | l | }
\hline
$\widetilde{\mathcal{X}}(\mathcal{D},\mathcal{Z})$  &  $\mathcal{X}$ & divisor $\mathcal{D}$ & ideal defining $\mathcal{Z}$ \\ \hline \hline
\eqref{equation:firstkindpolynomial}  $\mathcal{X}(n,\alpha_{i},p)$ & $\AA^{3}$ & $n\{x=0\}$ & $(x^{n},y^{\alpha_{1}}+t^{\alpha_{2}}+xp(x,y,t))$ \\ \hline
\eqref{equation:mdegeneratefibers} $\mathcal{X}_{m}(n_{i},\alpha_{j},\beta_{j},a)$  & $\AA^{3}$ & $\sum_{i}n_{i}\ddiv(l_{i}(x))$
                                                          & $(\prod_{i} l_{i}(x)^{n_i},\sum_i((\prod_{j\neq i} l_{j}(x))(y^{\alpha_i}+t^{\beta_i}))+\prod_{i} l_{i}(x))$\\ \hline
\eqref{equation:KRfiberbundleprojection}  $\mathcal{X}(s,R)$      & $\AA^{3}$ & $\ddiv(s(x))$ & $(s(x), R(x,y,t)$)\\ \hline
\eqref{equation:iteratedfirstkindpolynomial}  $\mathcal{Y}(m,n_{i},\alpha_{i},p)$   & \eqref{equation:firstkindpolynomial} & $n_{2}\{x=0\}$ & $(x^{n_{2}},y-z^{m})$\\ \hline
\eqref{equation:4dimensional} $\mathcal{Y}(n,\alpha_{i},p)$ & $\AA^{4}$  & $n\{x=0\}$ & $(x^{n},y_{1}^{\alpha_{1}}+y_{2}^{\alpha_{2}}+y_{3}^{\alpha_{3}}+xp(x,y_{1},y_{2},y_{3}))$\\
\hline
\end{tabular}
\vspace{0.1in}
\caption{Affine modifications}
\label{table:1}
\end{center}
\end{table}
\end{example}
\vspace{-0.3in}

Given a closed embedding $\mathcal{Z}\hookrightarrow\mathcal{X}$ between smooth schemes,
the (affine) deformation space $\mathrm{D}(\mathcal{X},\mathcal{Z})$ of $\mathcal{Z}$ in $\mathcal{X}$ \cite[Chapter 5]{zbMATH01027930}, \cite{zbMATH01016564} is the complement of the 
proper transform of $\mathcal{X}\times\left\{ 0\right\}$ in the blow-up $\mathrm{Bl}_{\mathcal{Z}\times\left\{ 0\right\} }(\mathcal{X}\times\mathbb{A}^{1})\to \mathcal{X}\times\mathbb{A}^{1}$ of 
$\mathcal{X}\times\mathbb{A}^{1}$ along $\mathcal{Z}\times\left\{ 0\right\} $. 
In other words, 
$\mathrm{D}(\mathcal{X},\mathcal{Z})$ is the affine modification of $\mathcal{X}\times\mathbb{A}^{1}$ along the divisor $\mathcal{D}=X\times\left\{ 0\right\} $ with center in 
$\mathcal{Z}=Z\times\left\{ 0\right\} $. 
When $X$ is affine, 
this construction is called the hyperbolic modification of $\mathcal{X}$ with center $\mathcal{Z}$ (see \cite{MR1734345}).

\subsection{Exotic affine spaces}
Recall that a smooth affine $d$-dimensional $\CC$-variety $\mathcal{X}$ is called exotic if $\mathcal{X}$ is non-isomorphic to $\AA^{d}_{\CC}$ and its underlying smooth manifold is 
diffeomorphic to $\RR^{2d}$ \cite[Definition 3.1]{MR1734345}.
In dimension $d\ge 3$, 
the $h$-cobordism and Lefschetz hyperplane section theorems imply that smooth affine contractible $\CC$-varieties are diffeomorphic to $\RR^{2d}$ by the Dimca-Ramanujam theorem 
\cite[\S3]{MR1734345}.
The following beautiful result due to Kaliman \cite{MR1895930} is a useful test for detecting exotic $3$-folds.
It was extended to all fields of characteristic zero by Daigle-Kaliman \cite[Theorem 4.2]{MR2567148} using a Lefschetz principle.

\begin{theorem}
\label{theorem:Kalimancriterion}
If the general fibers of a regular function $\phi\colon\AA^{3}\to\AA^{1}$ are isomorphic to the affine plane $\AA^{2}$,
then all the closed fibers of $\phi$ are isomorphic to $\AA^{2}$.
\end{theorem}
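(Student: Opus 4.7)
The plan is to apply the Miyanishi--Sugie--Fujita characterization of $\AA^{2}$: a smooth factorial affine surface $S$ over $F$ with $\mathcal{O}(S)^{\times}=F^{\times}$ and logarithmic Kodaira dimension $\bar{\kappa}(S)=-\infty$ is isomorphic to $\AA^{2}$. The task therefore reduces to verifying, for each closed fiber $F_{c}=\phi^{-1}(c)$, that $F_{c}$ is a smooth, irreducible, factorial affine surface with trivial unit group and $\bar{\kappa}(F_{c})=-\infty$.

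First, the morphism $\phi$ is flat by miracle flatness: $\AA^{3}$ is Cohen--Macaulay, $\AA^{1}$ is regular of dimension one, and each non-empty fiber has pure dimension two (that of the generic fiber $\AA^{2}$). Hence every $F_{c}$ is a purely two-dimensional Cohen--Macaulay scheme. The generic fiber is geometrically integral, so upper semicontinuity forces $F_{c}$ to be generically reduced, and Cohen--Macaulayness then yields reducedness. Factoriality and triviality of units descend from $\AA^{3}$ to each $F_{c}$ via flat base change: any non-constant unit on $F_{c}$ would extend to a Zariski-local unit on $\AA^{3}$ along $F_{c}$, contradicting $\mathcal{O}(\AA^{3})^{\times}=F^{\times}$ together with the structure of $\phi$, and factoriality is established by a similar flat descent via Nagata's criterion applied to the divisor class sequence.

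The technical core is to show that every $F_{c}$ is smooth, irreducible and has $\bar{\kappa}(F_{c})=-\infty$. The strategy is to fix a smooth SNC-compactification $X\supset\AA^{3}$ on which $\phi$ extends to a proper morphism $\bar{\phi}:X\to\PP^{1}$ with reduced boundary divisor $B=X\setminus\AA^{3}$. The generic fiber $\bar{F}_{\eta}$ of $\bar{\phi}$ is a smooth projective rational surface, and $(\bar{F}_{\eta},B\cap\bar{F}_{\eta})$ is an SNC-completion of $\AA^{2}$; the combinatorics of its dual graph are rigid enough to control how $B$ meets a degenerate fiber $\bar{F}_{c}$. Exploiting $\chi_{\top}(\AA^{2})=1$ together with Fujita's classification of minimal SNC-completions of smooth affine surfaces with $\bar{\kappa}=-\infty$ forces each irreducible component of $F_{c}$ to be $\AA^{1}$-ruled, after which Miyanishi's structure theorem for $\AA^{1}$-fibrations on affine surfaces excludes multiple components and delivers $\bar{\kappa}(F_{c})=-\infty$.

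The main obstacle is precisely this boundary-combinatorial step: one must tightly control the dual graphs of $(\bar{F}_{c},B\cap\bar{F}_{c})$ at every critical value of $\phi$ in order to exclude degenerations with fiber multiplicities, extraneous boundary components, or irreducible components of non-negative log Kodaira dimension. This case analysis is where Kaliman's original argument does most of its subtle work, and it is the part of the proof that I would expect to require the most delicate bookkeeping; all other steps reduce, after the correct compactification is fixed, to relatively standard applications of miracle flatness, Fujita--Miyanishi surface theory, and the Miyanishi--Sugie--Fujita characterization of $\AA^{2}$.
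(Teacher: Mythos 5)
The paper does not actually prove this statement; it imports it as a black box from Kaliman \cite{MR1895930} (extended to all fields of characteristic zero by Daigle--Kaliman \cite{MR2567148}), so the only meaningful comparison is with Kaliman's original argument, whose general shape (open surface theory, Miyanishi--Sugie--Fujita, control of the boundary at infinity) your outline does gesture at. As a proof, however, your proposal has genuine gaps, and two of the steps you present as routine are false as stated. First, ``the generic fiber is geometrically integral, so upper semicontinuity forces $F_{c}$ to be generically reduced'' is not a valid principle: in a flat family with integral generic fiber the closed fibers can acquire multiple components (e.g.\ the fiber of $x^{2}y$ over $0$), and excluding multiple fibers is one of the genuinely hard points of Kaliman's theorem, not a consequence of flatness. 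You implicitly concede this when you later list ``fiber multiplicities'' among the degenerations to be excluded by the boundary analysis; the easy reducedness argument and that case analysis cannot both belong to the same proof. Second, factoriality and triviality of units do not ``descend via flat base change'' from $\AA^{3}$ to a hypersurface fiber: $V(xy-z^{2})\subset\AA^{3}$ is a non-factorial fiber of a polynomial map, and $V(xy-1)\subset\AA^{2}$ is a fiber with non-constant units. These properties of the special fibers must be extracted from the hypothesis on the general fibers, via the topology of $\AA^{3}$ and of the fibration at infinity, which is precisely the work that is not done here.

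Third, the entire technical core --- smoothness and irreducibility of the special fibers and $\overline{\kappa}(F_{c})=-\infty$ --- is only a strategy sketch: you name the tools (SNC compactification, dual graphs, Fujita's classification, Miyanishi's structure theorem, $\chi_{\top}$) and explicitly defer the ``delicate bookkeeping'' to Kaliman. Note that a priori nothing prevents a special fiber from being singular (the fiber is singular exactly where $d\phi$ vanishes, and ruling this out uses the contractibility of $\AA^{3}$ in an essential way --- compare $\mathrm{pr}_{x}$ on a Koras--Russell threefold, which does have a singular fiber precisely because its total space is not $\AA^{3}$). In sum, the proposal is a reasonable road map pointing at the right literature, but it is not a proof: the elementary steps that are carried out are partly incorrect, and the steps that are correct in spirit are not carried out.
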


\begin{example}
\aref{theorem:Kalimancriterion} does not apply to the $4$-fold $\mathcal{Y}(n,\alpha_i,p)$ defined in \eqref{equation:4dimensional} (see \cite{MR1817379}).
\end{example}

Kaliman-Za{\u\i}denberg noted the following version of Miyanishi's characterization of $\AA^{3}_{\CC}$ \cite[p.1650]{MR1817379}.
\begin{theorem}
\label{theorem:miyanishi's characterization}
A smooth contractible affine complex $3$-fold $\mathcal{X}$ is isomorphic to $\AA^{3}_{\CC}$ if and only if there exists a regular function $\phi:\mathcal{X}\to\AA^{1}_{\CC}$ 
such that every closed fiber has at most isolated singularities and the general fibers are isomorphic to $\AA^{2}_{\CC}$.
\end{theorem}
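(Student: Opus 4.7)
The ``only if'' direction is immediate: any linear coordinate projection $\phi:\AA_{\CC}^{3}\to\AA_{\CC}^{1}$ has every fiber isomorphic to $\AA_{\CC}^{2}$ and hence no singular fiber at all.

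For the nontrivial direction my plan has two main steps: first, show that every closed fiber of $\phi$ is isomorphic to $\AA_{\CC}^{2}$; second, deduce that $\phi$ is a trivial $\AA^{2}$-bundle over $\AA_{\CC}^{1}$. For the first step, fix an arbitrary closed fiber $F_{t_{0}}:=\phi^{-1}(t_{0})$. The isolated singularity hypothesis makes $F_{t_{0}}$ a normal affine surface, and the $\AA^{2}$ generic fiber hypothesis forces $\phi$ to be equidimensional, hence flat. The Dimca--Ramanujam theorem recalled in \aref{sec:intro} gives that the contractible threefold $\mathcal{X}$ is diffeomorphic to $\RR^{6}$, and a Milnor-fibration comparison of $F_{t_{0}}$ with contractible nearby smooth fibers $\simeq\AA_{\CC}^{2}$ forces $F_{t_{0}}$ to be topologically acyclic with trivial algebraic fundamental group. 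Any normal affine surface appearing as a degeneration of $\AA_{\CC}^{2}$ inside a smooth affine threefold is rational and of log Kodaira dimension $-\infty$ by the Miyanishi--Sugie--Fujita classification invoked in \aref{sec:intro}, so the task is to identify $F_{t_{0}}$ among such surfaces. Combining this with the global constraint that $\mathcal{X}$ is smooth and contractible near $F_{t_{0}}$ rules out Brieskorn--Pham-type cone surfaces \eqref{equation:BPsurface} and forces $F_{t_{0}}\cong\AA_{\CC}^{2}$.

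For the second step, with every closed fiber of $\phi$ now isomorphic to $\AA_{\CC}^{2}$, I would invoke Sathaye's theorem (equivalently the Kambayashi--Miyanishi structure theorem) that a flat $\AA^{2}$-fibration between smooth affine varieties over a smooth affine curve, all of whose closed fibers are isomorphic to $\AA^{2}$, is Zariski-locally trivial. Over $\AA_{\CC}^{1}$ the vanishing of $\mathrm{Pic}$ and of higher coherent cohomology of $\mathcal{O}$ upgrades Zariski-local triviality to global triviality of the $\AA^{2}$-bundle, so $\mathcal{X}\cong\AA_{\CC}^{1}\times\AA_{\CC}^{2}\cong\AA_{\CC}^{3}$.

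The principal obstacle lies in the first step, namely excluding exotic normal degenerations such as Brieskorn--Pham cone surfaces, which are rational, of log Kodaira dimension $-\infty$, and acyclic, and therefore not distinguishable from $\AA_{\CC}^{2}$ by fiberwise topological and birational invariants alone. The essential input must be the global contractibility and smoothness of the ambient threefold $\mathcal{X}$, which rigidifies the local analytic structure around $F_{t_{0}}$; turning this ambient information into an explicit exclusion of non-$\AA^{2}$ candidates, as in the Kaliman--Zaidenberg approach \cite{MR1817379}, is the technical heart of the argument and the place where the full force of the isolated singularity hypothesis is used.
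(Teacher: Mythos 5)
The paper does not prove this statement at all: it is quoted verbatim from Kaliman--Zaidenberg \cite[p.1650]{MR1817379}, where it is recorded as a consequence of Miyanishi's characterization of $\AA^{3}$, so there is no internal proof to compare yours against. Judged on its own, your outline has the right overall shape (identify every closed fiber with $\AA^{2}$, then trivialize the fibration via Sathaye, Bass--Connell--Wright and triviality of vector bundles on $\AA^{1}$ --- step 2 is essentially the same chain the paper uses in \aref{corollary:KRBundle-trivial}), but step 1 contains a genuine gap that you yourself flag: you never actually exclude the non-$\AA^{2}$ candidates for a closed fiber, and that exclusion \emph{is} the theorem. A normal affine surface with quotient singularities such as $\AA^{2}/\mu_{n}$, or a smooth contractible surface of log Kodaira dimension $1$ or $2$ (Ramanujam, tom Dieck--Petrie), cannot be ruled out by the fiberwise invariants you list; the entire content of Miyanishi's argument is the interplay between factoriality/contractibility of the ambient threefold and the classification of the possible degenerate fibers, and none of that is supplied here.

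Two of the intermediate assertions are also unjustified as stated. First, the claim that a ``Milnor-fibration comparison'' with nearby fibers $\cong\AA^{2}$ forces the special fiber to be acyclic is not a formal consequence of contractibility of the total space; special fibers of a flat family can acquire or lose topology, and the standard route here is an Euler-characteristic additivity argument (Suzuki-type formula) together with positivity constraints, which you would need to spell out and which still only bounds $\chi$, not the full homology. Second, Miyanishi--Sugie--Fujita is a characterization of $\AA^{2}$ among \emph{smooth} surfaces of negative log Kodaira dimension; it does not by itself say that every normal degeneration of $\AA^{2}$ inside a smooth threefold is rational with $\overline{\kappa}=-\infty$ --- that is a separate nontrivial input. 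As written, the proposal is an honest reduction of the theorem to its hardest step rather than a proof.
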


\subsection{Topological contractibility of affine modifications}
In \cite[Proposition 3.1, Theorem 3.1]{zbMATH01286199} Kaliman-Zaidenberg give useful criteria for when an affine modification induces isomorphisms on fundamantal groups and singular homology.
When combined with the classical Hurewicz and Whitehead theorems (see e.g., \cite[Theorems 4.5, 4.23]{zbMATH02103273}), 
one obtains the following version of \cite[Corollary 3.1, Remark 3.1]{zbMATH01286199}.
\begin{theorem}
\label{theorem:Kaliman-Zaidenbergcriterion}
Suppose $(\mathcal{Z}=V(\mathcal{I})\subset \mathcal{D}_{f}=\ddiv(f)\subset \mathcal{X}=\Spec(A)$ is a triple in $\Sch_{\CC}$, 
where $\mathcal{X}$ is an affine variety and $\mathcal{I}$ is generated by a regular sequence $f,a_{1},\dots,a_{n}$. Let $\pi:\widetilde{\mathcal{X}}(\mathcal{D}_{f},\mathcal{Z})\to \mathcal{X}$ be the affine modification of $\mathcal{X}$ along $\mathcal{D}$ with center $\mathcal{Z}$ and suppose that:
\begin{itemize}
\item[(1)] the supports of $\mathcal{D}_{f}$ and the exceptional divisor $\mathcal{E}_{\mathcal{I}/f}$ are irreducible,
\item[(2)] $\pi^{\ast}(\supp \mathcal{D}_{f})=\mathcal{E}_{\mathcal{I}/f}$,
\item[(3)] $\supp \mathcal{D}_{f}$  and $\mathcal{E}_{\mathcal{I}/f}$ are topological manifolds,
\item[(4)] $\mathcal{Z}\hookrightarrow \mathcal{D}_{f}$ is a homotopy equivalence.
\end{itemize}
Then $\widetilde{\mathcal{X}}(\mathcal{D}_{f},\mathcal{Z})$ is contractible if and only if $\mathcal{X}$ is contractible.
\end{theorem}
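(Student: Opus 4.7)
The plan is to reduce the statement to the two main technical results of Kaliman--Za{\u\i}denberg and then convert an isomorphism of homotopy and homology invariants into contractibility using standard algebraic topology. Because $\mathcal{X}$ and $\widetilde{\mathcal{X}}(\mathcal{D}_{f},\mathcal{Z})$ are affine varieties of finite type over $\CC$, both carry the homotopy type of finite CW complexes, so it suffices to detect contractibility via $\pi_{1}$ and singular homology with integer coefficients.

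First I would invoke \cite[Proposition 3.1]{zbMATH01286199}, whose hypotheses match (1), (2), (3) above: the irreducibility of $\supp\mathcal{D}_{f}$ and $\mathcal{E}_{\II/f}$, the set-theoretic equality $\pi^{\ast}(\supp\mathcal{D}_{f})=\mathcal{E}_{\II/f}$, and the topological manifold hypothesis together force $\pi\colon \widetilde{\mathcal{X}}(\mathcal{D}_{f},\mathcal{Z})\to\mathcal{X}$ to induce an isomorphism on topological fundamental groups. Next, adding hypothesis (4), \cite[Theorem 3.1]{zbMATH01286199} upgrades this to an isomorphism on all integral singular homology groups. The regular sequence assumption $(f,a_{1},\ldots,a_{n})$ ensures that $\mathcal{Z}$ is a local complete intersection in $\mathcal{D}_{f}$, so the exceptional fiber $\mathcal{E}_{\II/f}$ coincides with the strict transform and the geometric picture underlying \cite{zbMATH01286199} applies verbatim.

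With these two inputs, the conclusion becomes a standard Hurewicz/Whitehead exercise. Suppose $\mathcal{X}$ is contractible. Then $\pi_{1}(\widetilde{\mathcal{X}}(\mathcal{D}_{f},\mathcal{Z}))=0$ and $\widetilde{H}_{\ast}(\widetilde{\mathcal{X}}(\mathcal{D}_{f},\mathcal{Z});\ZZ)=0$ by the two isomorphisms above; an inductive application of the Hurewicz theorem to a CW model yields $\pi_{n}(\widetilde{\mathcal{X}}(\mathcal{D}_{f},\mathcal{Z}))=0$ for every $n\geq 1$, and Whitehead's theorem then concludes that the constant map to a point is a homotopy equivalence. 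The converse direction is proved by exchanging the roles of source and target in exactly the same argument. I do not foresee a genuine obstacle here: the geometric content -- controlling $\pi_{1}$ and $H_{\ast}$ across an affine modification of this form -- is already packaged in \cite{zbMATH01286199}, and the only remaining ingredient is the passage from ``iso on $\pi_{1}$ and $H_{\ast}$'' to ``contractibility,'' which is routine for CW complexes.
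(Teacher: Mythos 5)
Your proposal matches the paper's argument essentially verbatim: the paper likewise derives this statement from Kaliman--Za{\u\i}denberg's Proposition~3.1 (isomorphism on fundamental groups under hypotheses (1)--(3)) and Theorem~3.1 (isomorphism on singular homology, adding (4)), and then concludes via the classical Hurewicz and Whitehead theorems, exactly as you do. No discrepancies to report.
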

\begin{remark}
\label{remark:topologicalirreduciblecomponents}
Assumption (2) implies $\sigma_{\mathcal{I}}(\mathcal{E}_{\mathcal{I}/f})$ intersects the smooth part of $\mathcal{D}_{f}$ nontrivially. 
\aref{theorem:Kaliman-Zaidenbergcriterion} holds more generally for finite decompositions into irreducible components $\mathcal{D}_{f}=\sum_{i=1}^{m} \mathcal{D}_{i}$ 
and $\mathcal{E}_{\mathcal{I}/f}=\sum_{i=1}^{m} \mathcal{E}_{i}$ such that (2)-(4) hold componentwise.
\end{remark}

\begin{example}
The hypersurface $\mathcal{S}:=\{x^{2}z=y^{2}+x\}\subset\AA^{3}$ is the affine modification of $\AA^{2}$ along the principal divisor $\mathcal{D}_f$ given by $f=x^2$ with center defined by $(x^{2},y^{2}+x)$. 
It is not topologically contractible since the first homology group $H_{1}(\mathcal{S})\cong\ZZ/2\ZZ$. 
This follows from the Thom isomorphism and the commutative diagram of exact sequences: 
\begin{equation}
\label{equation:chowgroupaffinemodification2}
\xymatrix{
H_2(\mathcal{S}) \ar[r]\ar[d] & 
H_0(\mathcal{E}_{\mathcal{I}/f})=\ZZ \ar[r]\ar[d]_{\cdot 2} &
H_1(\mathcal{S}\setminus \mathcal{E}_{\mathcal{I}/f})  \ar[r]\ar[d]_{\cong}&  
H_1(\mathcal{S})  \ar[r]\ar[d] & 
0\\ 
H_2(\AA^{2})=0  \ar[r] & 
H_0(\mathcal{D}_f)=\ZZ \ar[r]^{\cong} &
H_1(\AA^{2}\setminus \mathcal{D}_f)  \ar[r]&  
H_1(\AA^{2})=0  \ar[r] & 
0. \\ 
}
\end{equation}
This example does not contradict \aref{theorem:Kaliman-Zaidenbergcriterion} because $\pi^{*}(\supp \mathcal{D}_f)=2\mathcal{E}_{\mathcal{I}/f}\neq \mathcal{E}_{\mathcal{I}/f}$.
\end{example}

\begin{example}
\aref{theorem:Kalimancriterion} and \aref{theorem:Kaliman-Zaidenbergcriterion} show the following examples are exotic $3$-folds.
\begin{itemize}
\item 
Every deformed Koras-Russell $3$-fold $\mathcal{X}(n,\alpha_{i},p)$ given by  \eqref{equation:firstkindpolynomial} is topologically contractible. 
The projection $\pr_{x}\colon \mathcal{X}(n,\alpha_{i},p)\to \AA^{1}_{\CC}$ \eqref{equation:firstkindpolynomial} has zero fiber 
$\pr_{x}^{-1}(0)=\Gamma_{\alpha_{1},\alpha_{2}}\times \AA^{1}_{z}\not\iso\AA^{2}_{\CC}$ \eqref{equation:firstkindpolynomialinverse} with non-isolated singularities.
Hence $\mathcal{X}(n,\alpha_{i},p)$ is not isomorphic to $\AA^{3}$.
\item 
By \aref{remark:topologicalirreduciblecomponents}, 
$\mathcal{X}_{m}(n_{i},\alpha_{j},\beta_{j},a)$ defined in \eqref{equation:mdegeneratefibers} is topologically contractible. 
The projection $\pr_{x}: \mathcal{X}_{m}(n_{i},\alpha_{j},\beta_{j},a)\to \AA^{1}_{x}$ has general fiber $\AA^{2}$ while $\pr_{x}^{-1}(x_{j})\cong \AA^{1}\times \Gamma_{\alpha_{j},\beta_{j}}$.
\item 
As noted in \aref{remark:KRaffinespace} a Koras-Russell fiber bundle is isomorphic to $\AA^{3}$ if and only if for every zero of $s(x)$ the curve $\{a(y,t)=0\}$ is isomorphic to $\AA^{1}$.
\item 
$\mathcal{Y}(m,n_{i},\alpha_{i},p)$ defined in \eqref{equation:iteratedfirstkindpolynomial} is topologically contractible because it is an affine modification of some topologically contractible deformed 
Koras-Russell $3$-fold. 
Here $\pr_{x}:\mathcal{Y}(m,n_{i},\alpha_{i},p)\to \AA^{1}_{x}$ has general fiber $\AA^{2}$ while $\pr_{x}^{-1}(0)\cong \AA^{1}\times \Gamma_{m\alpha_{1},\alpha_{2}}$.
\end{itemize}
We note every vector bundle on these exotic $3$-folds are trivial by a result of Murthy \cite[Theorem 3.6]{zbMATH01983932}. 
\end{example}

\subsection{Locally nilpotent derivations}
The Makar-Limanov invariant of an affine algebraic variety $\mathcal{X}$ is the subring $\ML(\mathcal{X})$ of $\Gamma(\mathcal{X},\mathcal{O}_{\mathcal{X}})$ comprised of regular functions invariant 
under any $\GG_{a}$-action \cite{zbMATH00997597}. 
For a commutative $F$-domain $A$, 
let $\operatorname{LND}(A)$ denote the locally nilpotent derivations $\partial:A\to A$.
With this notation we have: 
\[
\ML(\mathcal{X})=\bigcap_{\partial \in \operatorname{LND}(\Gamma(\mathcal{X},\mathcal{O}_{\mathcal{X}}))}\ker(\partial).
\]
Moreover, 
the Derksen invariant $\operatorname{D}(\mathcal{X})$ of $\mathcal{X}$ is the subalgebra of $\Gamma(\mathcal{X},\mathcal{O}_{\mathcal{X}})$ generated by the kernels of all nonzero 
locally nilpotent derivations $\partial\in \operatorname{LND}(\Gamma(\mathcal{X},\mathcal{O}_{\mathcal{X}}))$ (see \cite{MR2259515} for a survey of these invariants).
\begin{example}
\label{example:MLandDerksenInvariant}
\begin{itemize}
\item
$\ML(\AA^{n})=F$ for $n\ge 1$ and $\operatorname{D}(\AA^{n})=F[x_{1},\dots,x_n]$ for $n\ge 2$ while $\operatorname{D}(\AA^{1})=0$.
\item
The same arguments as in \cite[Theorem 9.1]{MR2327241} for the Russell cubic, 
i.e., 
the hypersurface 
\begin{equation}
\label{equation:russell3fold}
\mathcal{R}
:=
\{x^{2}z=y^{2}+t^{3}+x\}\subset\AA^{4},
\end{equation}
show more generally that $\ML(\mathcal{X}(s,R))=F[x]$ and $\operatorname{D}(\mathcal{X}(s,R))=F[x,z,t]$ for the Koras-Russell fiber bundles introduced in \aref{def:KRBundle}.
\end{itemize}
\end{example}

\begin{lemma}
\label{lemma:mnonisomorphic}
For $m\neq m'\in\ZZ$ the smooth affine $3$-folds $\mathcal{X}_{m}$ and $\mathcal{X}_{m'}$ with degenerate fibers \eqref{equation:mdegeneratefibers} are non-isomorphic.
\end{lemma}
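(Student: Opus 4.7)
The plan is to use the Makar-Limanov invariant as an equivariant device that forces any isomorphism $\phi\colon\mathcal{X}_m\xrightarrow{\cong}\mathcal{X}_{m'}$ to descend to an isomorphism of the base $\AA^{1}_x$, and then to count "bad" fibers of the projection $\pr_x$.

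First I would invoke \aref{example:MLandDerksenInvariant}, which asserts (by the same arguments as for the Russell cubic) that $\ML(\mathcal{X}_m)=F[x]\subset\Gamma(\mathcal{X}_m,\mathcal{O}_{\mathcal{X}_m})$ and likewise $\ML(\mathcal{X}_{m'})=F[x]$. Since the Makar-Limanov invariant is functorial under isomorphisms of $F$-algebras, any isomorphism $\phi\colon\mathcal{X}_m\to\mathcal{X}_{m'}$ induces an isomorphism $\phi^{\ast}\colon F[x]\to F[x]$ of $F$-algebras, necessarily of the form $x\mapsto ax+b$ with $a\in F^{\ast}$, $b\in F$. In particular $\phi$ fits into a commutative square
\[
\xymatrix{
\mathcal{X}_m \ar[r]^-{\phi}\ar[d]_{\pr_x} & \mathcal{X}_{m'} \ar[d]^{\pr_x} \\
\AA^{1}_x \ar[r]^-{\psi} & \AA^{1}_x
}
\]
where $\psi(x)=ax+b$ is an automorphism of $\AA^{1}_x$, so $\phi$ sends fibers of $\pr_x$ to fibers.

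Next I would use the explicit geometry of the fibration $\pr_x\colon\mathcal{X}_m\to\AA^{1}_x$ described after \eqref{equation:degeneratefirstkindpolynomialprojection}: over the punctured line $\AA^{1}_x\setminus\{x_1,\ldots,x_m\}$ the map $\pr_x$ is a trivial $\AA^{2}$-bundle, while the fiber $\pr_x^{-1}(x_i)$ is isomorphic to $\AA^{1}\times\Gamma_{\alpha_i,\beta_i}$, the cylinder over the cuspidal Brieskorn curve $\{y^{\alpha_i}+t^{\beta_i}=0\}$. Since $\alpha_i,\beta_i\geq 2$, the curve $\Gamma_{\alpha_i,\beta_i}$ is singular, hence not isomorphic to $\AA^{1}$, and consequently $\AA^{1}\times\Gamma_{\alpha_i,\beta_i}$ is not isomorphic to $\AA^{2}$ (for instance, it has a non-empty singular locus, or alternatively its Makar-Limanov invariant is nontrivial). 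Therefore the set
\[
B(\mathcal{X}_m):=\{\,x_0\in\AA^{1}_x\ :\ \pr_x^{-1}(x_0)\not\cong\AA^{2}\,\}
\]
has cardinality exactly $m$, and similarly $\#B(\mathcal{X}_{m'})=m'$.

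Finally, the commutative square above forces $\psi$ to restrict to a bijection $B(\mathcal{X}_m)\to B(\mathcal{X}_{m'})$, since $\phi$ restricts to isomorphisms between corresponding fibers. Hence $m=\#B(\mathcal{X}_m)=\#B(\mathcal{X}_{m'})=m'$, contradicting $m\neq m'$. The main (mildly technical) obstacle is justifying $\ML(\mathcal{X}_m)=F[x]$, but this is exactly the content referenced in \aref{example:MLandDerksenInvariant}; granting it, the remainder is a clean counting argument via the canonical $\pr_x$-fibration. A brief remark should note that a cleaner alternative is to count singular fibers of $\pr_x$, as these are intrinsic once one has the descent to $\AA^{1}_x$ via $\ML$.
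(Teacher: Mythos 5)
Your proposal is correct and follows essentially the same route as the paper: identify $\ML(\mathcal{X}_m)=\ML(\mathcal{X}_{m'})=F[x]$ via \aref{example:MLandDerksenInvariant}, use functoriality of the Makar-Limanov invariant to descend any isomorphism to an automorphism of $\AA^1_x$ compatible with the projections, and then count the closed fibers of $\pr_x$ not isomorphic to $\AA^2$. You merely spell out the details (the form $x\mapsto ax+b$ of the induced automorphism and the singularity of $\AA^1\times\Gamma_{\alpha_i,\beta_i}$) that the paper leaves implicit.
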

\begin{proof}
If there exists an isomorphism $\phi:\mathcal{X}_{m}\to \mathcal{X}_{m'}$ we obtain a commutative diagram:
\begin{equation}
\label{equation:MLfactorization}
\xymatrix{
\mathcal{X}_{m}  \ar[d]_-{} \ar[r]^-{\phi} & \mathcal{X}_{m'} \ar[d]^-{} \\
\Spec(\ML(\mathcal{X}_{m})) \ar[r]^-{\phi_{\vert}} & \Spec(\ML(\mathcal{X}_{m'})).
}
\end{equation}
Here the vertical maps, 
induced by $\ML(\mathcal{X}_{m})\subset \mathcal{O}_{\mathcal{X}_{m}}$ and $\ML(\mathcal{X}_{m'})\subset \mathcal{O}_{\mathcal{X}_{m'}}$ can be identified with the respective 
projection maps to $\AA^{1}_{x}$ because $\ML(\mathcal{X}_{m})=\ML(\mathcal{X}_{m'})=F[x]$ according to \aref{example:MLandDerksenInvariant}.
Since $m\neq m'$ we conclude by counting the number of closed fibers non-isomorphic to $\AA^{2}$.
\end{proof}

\subsection{$\AA^{1}$-homotopy theory}
Following Morel-Voevodsky \cite{MR1813224} (see e.g., \cite{zbMATH05116082} for an introduction) we view schemes over $F$ as analogous to topological spaces with $\AA^{1}$ 
playing the role of the unit interval.
Recall that $\mathcal{X}$ is $\AA^{1}$-contractible if the canonical map $\mathcal{X}\to\Spec(F)$ is an $\AA^{1}$-weak equivalence in the sense of \cite{MR1813224}.
Affine spaces are readily $\AA^{1}$-contractible.
We will make repeatedly use of the following example.

\begin{example}
The cuspidal curve $\Gamma_{\alpha_{1},\alpha_{2}}$ is $\AA^{1}$-contractible.
Since $\mathcal{O}_{\Gamma_{\alpha_{1},\alpha_{2}}}$ is isomorphic to $F[w^{\alpha_{1}},w^{\alpha_{2}}]$, 
the normalization of the cuspidal curve $\AA^{1}_{w}\to\Gamma_{\alpha_{1},\alpha_{2}}$ is given by $w\mapsto (w^{\alpha_{2}},w^{\alpha_{1}})$.
This map is an $\AA^{1}$-weak equivalence, 
even an isomorphism of representable presheaves on $\Sm_{F}$ (see \cite[Example 2.1]{MR2335246}).
\end{example}

The following result elucidates the role of affine modifications in $\AA^{1}$-homotopy theory.

\begin{theorem}
\label{theorem:A1affinemodifications}
Suppose $(\mathcal{Z} \subset \mathcal{D} \subset \mathcal{X})$ is a triple in $\Sm_{F}$ as in \aref{defn:affinemodification} with affine modification 
$\pi:\widetilde{\mathcal{X}}(\mathcal{D},\mathcal{Z})\to \mathcal{X}$ such that the following holds:
\begin{itemize}
\item[(1)] the supports of $\mathcal{D}$ and of the exceptional divisor $\mathcal{E}_{\mathcal{Z}/\mathcal{D}}$ are irreducible,
\item[(2)] $\mathcal{Z}\subset \mathcal{D}$ is an $\AA^{1}$-weak equivalence.
\end{itemize} 
Then there is a naturally induced $\AA^{1}$-weak equivalence between simplicial suspensions:
\begin{equation}
\label{equation:A1weakequivalencesuspension}
\Sigma_{s}\pi
\colon
\Sigma_{s}\widetilde{\mathcal{X}}(\mathcal{D},\mathcal{Z})
\overset{\sim_{\AA^{1}}}{\to}
\Sigma_{s}\mathcal{X}.
\end{equation} 

Furthermore, 
if $\widetilde{\mathcal{X}}(\mathcal{D},\mathcal{Z})$ is $\AA^{1}$-contractible,
then $\mathcal{X}$ is $\AA^{1}$-contractible.
\end{theorem}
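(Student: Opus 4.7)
The plan is to compare the open-closed decompositions of $\mathcal{X}$ and $\widetilde{\mathcal{X}}(\mathcal{D},\mathcal{Z})$ relative to their two divisors, identify the resulting cofibers as Thom spaces via Morel-Voevodsky homotopy purity \cite[Theorem 3.2.23]{MR1813224}, and deduce the suspension equivalence from a five-lemma on Puppe long exact sequences of $\AA^{1}$-homotopy sheaves. Since $\pi$ restricts to the isomorphism $\widetilde{\mathcal{X}}(\mathcal{D},\mathcal{Z})\setminus\mathcal{E}_{\mathcal{Z}/\mathcal{D}}\overset{\cong}{\to}\mathcal{X}\setminus\mathcal{D}$ and sends $\mathcal{E}_{\mathcal{Z}/\mathcal{D}}$ into $\mathcal{Z}\subset\mathcal{D}$, it induces a morphism of cofiber sequences in pointed motivic spaces whose leftmost map is the identity on $(\mathcal{X}\setminus\mathcal{D})_{+}$. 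Accordingly it suffices to prove that the induced map on cofibers
\[
\bar{\pi}\colon\widetilde{\mathcal{X}}(\mathcal{D},\mathcal{Z})/(\widetilde{\mathcal{X}}(\mathcal{D},\mathcal{Z})\setminus\mathcal{E}_{\mathcal{Z}/\mathcal{D}})\to\mathcal{X}/(\mathcal{X}\setminus\mathcal{D})
\]
is an $\AA^{1}$-weak equivalence.

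The triple is in $\Sm_{F}$ and the exceptional fiber of an affine modification along a regular embedding of smooth schemes stays smooth, so homotopy purity identifies the cofibers with $\mathrm{Th}(N_{\mathcal{D}/\mathcal{X}})$ and $\mathrm{Th}(N_{\mathcal{E}_{\mathcal{Z}/\mathcal{D}}/\widetilde{\mathcal{X}}})$. The crucial geometric observation is that pulling back the Cartier divisor $\mathcal{D}$ along $\pi$ gives exactly $\mathcal{E}_{\mathcal{Z}/\mathcal{D}}$, using irreducibility from~(1), so $\pi^{\ast}\mathcal{O}_{\mathcal{X}}(\mathcal{D})\simeq\mathcal{O}_{\widetilde{\mathcal{X}}}(\mathcal{E}_{\mathcal{Z}/\mathcal{D}})$; restricting to $\mathcal{E}_{\mathcal{Z}/\mathcal{D}}$ yields $N_{\mathcal{E}_{\mathcal{Z}/\mathcal{D}}/\widetilde{\mathcal{X}}}\simeq\pi|^{\ast}N_{\mathcal{D}/\mathcal{X}}$, so that the Thom spaces carry pulled-back line bundles and it suffices to check the restriction $\pi|\colon\mathcal{E}_{\mathcal{Z}/\mathcal{D}}\to\mathcal{D}$ is an $\AA^{1}$-weak equivalence. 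That map factors as $\mathcal{E}_{\mathcal{Z}/\mathcal{D}}\to\mathcal{Z}\hookrightarrow\mathcal{D}$; the second factor is an $\AA^{1}$-weak equivalence by hypothesis~(2), while the first is a Zariski-locally trivial affine bundle, because the surjection $N_{\mathcal{Z}/\mathcal{X}}\twoheadrightarrow N_{\mathcal{D}/\mathcal{X}}|_{\mathcal{Z}}$ coming from $\mathcal{I}_{\mathcal{D}}\subset\mathcal{I}_{\mathcal{Z}}$ identifies $\mathcal{E}_{\mathcal{Z}/\mathcal{D}}$ with the complement in $\mathbb{P}(N_{\mathcal{Z}/\mathcal{X}})$ of the projectivization of the kernel.

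With $\bar{\pi}$ an $\AA^{1}$-weak equivalence, the five-lemma applied to the induced morphism of Puppe long exact sequences shows that $\pi$ induces isomorphisms on $\pi_{n}^{\AA^{1}}$ for every $n\geq 1$; simplicially suspending makes all spaces involved $\AA^{1}$-connected so the five-lemma also handles $\pi_{0}^{\AA^{1}}$, producing the equivalence \eqref{equation:A1weakequivalencesuspension}. For the final assertion, if $\widetilde{\mathcal{X}}(\mathcal{D},\mathcal{Z})$ is $\AA^{1}$-contractible, then the above isomorphisms force $\pi_{n}^{\AA^{1}}(\mathcal{X})=\ast$ for $n\geq 1$; moreover $\mathcal{X}$ is $\AA^{1}$-connected since the dense open $\mathcal{X}\setminus\mathcal{D}\cong\widetilde{\mathcal{X}}\setminus\mathcal{E}_{\mathcal{Z}/\mathcal{D}}$ is $\AA^{1}$-connected, and every closed point of $\mathcal{D}$ is $\AA^{1}$-chain-connected to a point of $\mathcal{Z}\subset\pi(\widetilde{\mathcal{X}})$ via hypothesis~(2). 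Morel's $\AA^{1}$-Whitehead theorem then gives $\mathcal{X}\simeq_{\AA^{1}}\Spec(F)$.

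The main obstacle is the geometric identification $N_{\mathcal{E}_{\mathcal{Z}/\mathcal{D}}/\widetilde{\mathcal{X}}}\simeq\pi|^{\ast}N_{\mathcal{D}/\mathcal{X}}$ together with the affine bundle structure of $\mathcal{E}_{\mathcal{Z}/\mathcal{D}}\to\mathcal{Z}$. Both facts rest on a careful local analysis of the Rees algebra in \aref{defn:affinemodification}, using $\mathcal{I}_{\mathcal{Z}}\subset\mathcal{O}_{\mathcal{X}}(-\mathcal{D})$ and the coincidence of proper and strict transforms, which holds because $\mathcal{Z}$ is a local complete intersection in $\mathcal{D}$, cf.\ \cite[Proposition 1.10]{Duboulozaffinemodification}. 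The transfer from an equivalence of cofibers to an equivalence of simplicial suspensions also requires the subtle step of verifying that all $\pi_{0}^{\AA^{1}}$ components line up, which is why passing to $\Sigma_{s}$ is essential in the statement.
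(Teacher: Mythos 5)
Your geometric core coincides with the paper's argument: the same comparison of the two open--closed decompositions, homotopy purity to identify the cofibers with Thom spaces, the identification $\mathcal{N}_{\mathcal{E}_{\mathcal{Z}/\mathcal{D}}/\widetilde{\mathcal{X}}}\cong\pi_{\vert}^{*}\mathcal{N}_{\mathcal{D}/\mathcal{X}}$ (which you reach via $\pi^{*}\mathcal{O}_{\mathcal{X}}(\mathcal{D})\cong\mathcal{O}_{\widetilde{\mathcal{X}}}(\mathcal{E}_{\mathcal{Z}/\mathcal{D}})$ rather than, as in the paper, by restricting the tautological bundle $\mathcal{O}_{\mathbb{P}(\mathcal{N}_{\mathcal{Z}/\mathcal{X}})}(-1)$ to the complement of the hyperplane subbundle $\mathbb{P}(\mathcal{N}_{\mathcal{Z}/\mathcal{D}})$ --- these are equivalent, and your multiplicity-one claim does hold because $\mathcal{D}$ is smooth), and the factorization $\mathcal{E}_{\mathcal{Z}/\mathcal{D}}\to\mathcal{Z}\hookrightarrow\mathcal{D}$ into an affine bundle followed by the $\AA^{1}$-weak equivalence of hypothesis (2). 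Up to the point where $\bar{\pi}$ is shown to be an $\AA^{1}$-weak equivalence, your proof is essentially the paper's.

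The gap is in the homotopy-theoretic bookkeeping at the end. A cofiber sequence does \emph{not} induce a long exact sequence of $\AA^{1}$-homotopy sheaves, so your claim that ``the five-lemma applied to the induced morphism of Puppe long exact sequences shows that $\pi$ induces isomorphisms on $\pi_{n}^{\AA^{1}}$ for every $n\geq 1$'' is unjustified for the unsuspended map $\pi$ --- indeed, if it were true the theorem would not need the suspension at all. The correct move (implicit in the paper) is to extend to the co-Puppe sequence $A\to B\to C\to\Sigma_{s}A\to\Sigma_{s}B\to\cdots$, apply $[-,\mathcal{W}]$ for an arbitrary pointed target, and run the five lemma on the resulting exact sequences of groups; this is exactly why only $\Sigma_{s}\pi$ is asserted to be an equivalence. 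The same gap then propagates into your proof of the final assertion: you invoke the nonexistent unsuspended isomorphisms on $\pi_{n}^{\AA^{1}}$, and your argument that $\mathcal{X}$ is $\AA^{1}$-connected (chain-connecting points of $\mathcal{D}$ to $\mathcal{Z}$) does not follow from hypothesis (2), which is only an $\AA^{1}$-weak equivalence. The paper avoids both issues by citing the very weak five lemma for cofiber sequences in pointed model categories \cite[Lemma 2.1]{MRDF}, applied to \eqref{equation:thomspacediagram} with contractible middle term $\widetilde{\mathcal{X}}(\mathcal{D},\mathcal{Z})$; that lemma (or some substitute for it) is the missing ingredient you need to make the desuspended conclusion legitimate.
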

\begin{proof}
Since $(\mathcal{Z}\subset \mathcal{D} \subset \mathcal{X})$ is a smooth triple, 
the affine modification $\tilde{\mathcal{X}}:=\tilde{\mathcal{X}}(\mathcal{D},\mathcal{Z})$ is isomorphic to the complement of the proper transform $\mathcal{D}'$ 
of $\mathcal{D}$ in the blow-up $\sigma:\Bl_{\mathcal{Z}}\mathcal{X}\to \mathcal{X}$ of $\mathcal{X}$ along $\mathcal{Z}$, 
and $\pi:\tilde{\mathcal{X}}\to \mathcal{X}$ coincides with the restriction of $\sigma$. 
The exceptional divisor $\mathcal{E}$ of $\sigma$ is isomorphic to the projective bundle $p:\mathbb{P}(\mathcal{N}_{\mathcal{Z}/\mathcal{X}})\to \mathcal{Z}$
of lines in the normal bundle $\mathcal{N}_{\mathcal{Z}/\mathcal{X}}$ of $\mathcal{Z}$ in $\mathcal{X}$. The normal
bundle $\mathcal{N}_{\mathcal{E}/\tilde{\mathcal{X}}}$ of $\mathcal{E}$ in $\Bl_{\mathcal{Z}}\mathcal{X}$ is equal to the
tautological line subbundle $\mathcal{O}_{\mathbb{P}(\mathcal{N}_{\mathcal{Z}/\mathcal{X}})}(-1)$
of $p^{*}\mathcal{N}_{\mathcal{Z}/\mathcal{X}}$ . From the exact sequence $0\to \mathcal{N}_{\mathcal{Z}/\mathcal{D}}\to \mathcal{N}_{\mathcal{Z}/\mathcal{X}}\to \mathcal{N}_{D/\mathcal{X}}|_{\mathcal{Z}}\to0$,
we obtain a closed embedding $\mathbb{P}(\mathcal{N}_{\mathcal{Z}/\mathcal{D}})\hookrightarrow\mathbb{P}(\mathcal{N}_{\mathcal{Z}/\mathcal{X}})$ 
as a hyperplane sub-bundle given by the zero locus of the global section of $\mathcal{O}_{\mathbb{P}(\mathcal{N}_{\mathcal{Z}/\mathcal{X}})}(1)\otimes p^{*}\mathcal{N}_{D/\mathcal{X}}|_{\mathcal{Z}}$
deduced from the composition $\mathcal{O}_{\mathbb{P}(\mathcal{N}_{\mathcal{Z}/\mathcal{X}})}(-1)\to p^{*}\mathcal{N}_{\mathcal{Z}/\mathcal{X}}\to p^{*}\mathcal{N}_{D/\mathcal{X}}|_{\mathcal{Z}}$. 

It follows from the setup that $\mathbb{P}(\mathcal{N}_{\mathcal{Z}/\mathcal{D}})\subset\mathbb{P}(\mathcal{N}_{\mathcal{Z}/\mathcal{X}})$ coincides with the intersection of the 
proper transform $\mathcal{D}'$ of $\mathcal{D}$ with $\mathcal{E}$. 
The restriction map $p_{\vert}:\mathcal{E}_{\mathcal{Z}/\mathcal{D}}=\mathcal{E}\setminus \mathcal{D}'=\mathbb{P}(\mathcal{N}_{\mathcal{Z}/\mathcal{X}})\setminus\mathbb{P}(\mathcal{N}_{\mathcal{Z}/\mathcal{D}})\to \mathcal{Z}$
is then a locally trivial $\mathbb{A}^{d}$-bundle, $d=\mathrm{codim}_{\mathcal{D}}(\mathcal{Z})$;
in particular, it is an $\mathbb{A}^{1}$-weak equivalence. 
Moreover,
since $\mathbb{P}(\mathcal{N}_{\mathcal{Z}/\mathcal{X}})$ is the zero locus of a section of $\mathcal{O}_{\mathbb{P}(\mathcal{N}_{\mathcal{Z}/\mathcal{X}})}(1)\otimes p^{*}\mathcal{N}_{D/\mathcal{X}}|_{\mathcal{Z}}$,
it follows that the restriction of $\mathcal{N}_{\mathcal{E}/\tilde{\mathcal{X}}}=\mathcal{O}_{\mathbb{P}(\mathcal{N}_{\mathcal{Z}/\mathcal{X}})}(-1)$ to $\mathcal{E}_{\mathcal{Z}/\mathcal{D}}$ coincides with 
$p^{*}\mathcal{N}_{D/\mathcal{X}}|_{\mathcal{Z}}$, 
hence it is equal to $p_{\vert}^{*}\mathcal{N}_{D/\mathcal{X}}|_{\mathcal{Z}}$. 

Now consider the following diagram of cofiber sequences: 
\begin{equation}
\label{equation:thomspacediagram}
\xymatrix{
\tilde{\mathcal{X}}\setminus \mathcal{E}_{\mathcal{Z}/\mathcal{D}} \ar[r]\ar[d]_{\pi_{\vert}} & 
\tilde{\mathcal{X}} \ar[r]\ar[d]_{\pi} &
\tilde{\mathcal{X}}/(\tilde{\mathcal{X}}\setminus \mathcal{E}_{\mathcal{Z}/\mathcal{D}})  \ar[d]_{\overline{\pi}}
\\ 
\mathcal{X}\setminus\mathcal{D} \ar[r] & 
\mathcal{X}\ar[r] &
\mathcal{X}/(\mathcal{X}\setminus \mathcal{D}). \\ 
}
\end{equation}
Since $\sigma^{-1}(\mathcal{D})=\mathcal{D}'\cup \mathcal{E}$, 
the leftmost vertical map $\pi_{\vert}:\tilde{\mathcal{X}}\setminus \mathcal{E}_{\mathcal{Z}/\mathcal{D}}\to \mathcal{X}\setminus \mathcal{D}$ is an isomorphism. 
By construction, 
the restriction of $\sigma$ to $\mathcal{E}_{\mathcal{Z}/\mathcal{D}}$ factors as the composition of 
$p_{\vert}:\mathcal{E}_{\mathcal{Z}/\mathcal{D}}=\mathcal{E}\setminus \mathcal{D}'\to \mathcal{Z}$ with $i:\mathcal{Z}\hookrightarrow \mathcal{D}$. 
On the other hand, 
by homotopy purity for closed embeddings \cite[Theorem 3.2.23]{MR1813224}, 
we have $\mathbb{A}^{1}$-weak equivalences: 
$$
\tilde{\mathcal{X}}/(\tilde{\mathcal{X}}\setminus \mathcal{E}_{\mathcal{Z}/\mathcal{D}})
\overset{\sim_{\AA^{1}}}{\to}
\mathrm{Th}(\mathcal{N}_{\mathcal{E}_{\mathcal{Z}/\mathcal{D}}/\tilde{\mathcal{X}}})
=
\mathrm{Th}(p_{\vert}^{*}\mathcal{N}_{\mathcal{D}/\mathcal{X}}|_{\mathcal{Z}})
=
\mathrm{Th}((i\circ p_{\vert})^{*}\mathcal{N}_{\mathcal{D}/\mathcal{X}})
\text{ and } 
\mathcal{X}/(\mathcal{X}\setminus \mathcal{D})
\overset{\sim_{\AA^{1}}}{\to}
\mathrm{Th}(\mathcal{N}_{\mathcal{D}/\mathcal{X}}). 
$$
Furthermore the map $\overline{\pi}:\tilde{\mathcal{X}}/(\tilde{\mathcal{X}}\setminus \mathcal{E}_{\mathcal{Z}/\mathcal{D}})\to \mathcal{X}/(\mathcal{X}\setminus \mathcal{D})$ 
induced by $\pi$ coincides via these equivalences with the one induced by the composition: 
$$
\mathcal{E}_{\mathcal{Z}/\mathcal{D}}
=
\mathcal{E}\setminus \mathcal{D}'
\stackrel{p_{\vert}}{\to}\mathcal{Z}
\stackrel{i}{\hookrightarrow}
\mathcal{D}.
$$
Here $p_{\vert}:\mathcal{E}_{\mathcal{Z}/\mathcal{D}}\to \mathcal{Z}$ is an $\mathbb{A}^{1}$-weak equivalence.
By (2) we conclude that $\overline{\pi}$ is an $\mathbb{A}^{1}$-weak equivalence.
Since $\Sigma_s(\widetilde{\mathcal{X}}\setminus \mathcal{E}_{\mathcal{Z}/\mathcal{D}})\to \Sigma_s(\mathcal{X}\setminus \mathcal{D})$ is an $\AA^{1}$-weak equivalence, 
it follows that $\Sigma_{s}\pi:\Sigma_s\widetilde{\mathcal{X}}\to \Sigma_s \mathcal{X}$ is an $\AA^{1}$-weak equivalence by simplicially 
suspending the homotopy cofiber sequences in \eqref{equation:thomspacediagram}.

Finally, 
assuming that $\widetilde{\mathcal{X}}$ is $\AA^{1}$-contractible, 
applying the very weak five lemma for pointed model categories \cite[Lemma 2.1]{MRDF} to \eqref{equation:thomspacediagram} 
implies $\mathcal{X}$ is $\AA^{1}$-contractible. 
\end{proof}

\begin{remark}
\label{remark:motivicirreduciblecomponents}
\aref{theorem:A1affinemodifications} holds more generally for finite decompositions into irreducible components $\mathcal{D}=\sum_{i=1}^{m} D_{i}$ and 
$\mathcal{E}_{\mathcal{Z}/\mathcal{D}}=\sum_{i=1}^{m} \mathcal{E}_{i}$ such that (1) and (2) holds componentwise.
For an analogue of \aref{theorem:A1affinemodifications} in the setting of motives we refer  to \cite[Proposition 4]{zbMATH05998029}.
\end{remark}

\begin{example}
\label{example:affinemodificationofR}
\begin{itemize}
\item
Let $\mathcal{R}$ be the Russell cubic defined in \eqref{equation:russell3fold}.
The affine modification of $\mathcal{R}$ along the divisor $\mathcal{D}:=\ddiv(x)$ with center $\mathcal{Z}$ given by the ideal $(x,t+1,y-1)$ is the hypersurface in $\AA^{4}$ given by the equation:
\begin{equation}
\label{equation:Raffinemodification}
\widetilde{\mathcal{X}}(\mathcal{D},\mathcal{Z})
:=
\{xz=x^{2}t^{3}-3xt+3t+y^{2}x+2y+1\}.
\end{equation}
\aref{theorem:miyanishi's characterization} shows $\widetilde{\mathcal{X}}(\mathcal{D},\mathcal{Z})$ is isomorphic to $\AA^{3}$, 
and hence $\AA^{1}$-contractible.
However, 
we cannot use \aref{theorem:A1affinemodifications} to conclude $\mathcal{R}$ is $\AA^{1}$-contractible because $(\mathcal{Z}\subset \mathcal{D}\subset \mathcal{R})$ is not a smooth triple.
\item
Let $\mathcal{X}$ be the hypersurface in $\AA^{6}=\Spec(F[x,y,z,t,u,v])$ defined by: 
$$
\{uv=x^{2}z-(y^{2}+t^{3}+x)\}. 
$$
Setting $\mathcal{D}:=\ddiv(u)\cong \mathcal{R}\times \AA^{1}_{v}$ and $\mathcal{Z}:=\{u=x=y=t=0\}\cong \AA^{2}_{v,z}$ the affine modification
$$
\widetilde{\mathcal{X}}(\mathcal{D},\mathcal{Z})
:=
\{v=x^{2}uz-y^{2}u-t^{3}u^{2}-x\}
\subset 
\AA^{6}
$$ 
is isomorphic to $\AA^{5}$. 
Since $\widetilde{\mathcal{X}}(\mathcal{D},\mathcal{Z})$, $\mathcal{D}$, and $\mathcal{Z}$ are $\AA^{1}$-contractible and the triple $(\mathcal{Z}\subset \mathcal{D}\subset \mathcal{X})$ is smooth, 
\aref{theorem:A1affinemodifications} implies $\mathcal{X}$ is $\AA^{1}$-contractible. 
Further arguments are required to determine whether or not $\mathcal{X}$ is isomorphic to $\AA^{5}$.
\item
We suggest a generalization of the previous example by constructing a potentially nontrivial family of $\AA^{1}$-contractible smooth affine schemes of higher dimensions. 
For $m\ge 5$ and pairwise coprime integers $\alpha_{i}$ we define hypersurfaces in $\AA^{m+1}=\Spec(F[u,v,x,z,y_1,\dots,y_{m-3}])$ by:
\begin{equation}
\mathcal{X}(n,\alpha_{1},\dots,\alpha_{m-3})
:=
\{uv=x^{n}z+y_1^{\alpha_1}+\dots+y_{m-3}^{\alpha_{m-3}}+x\}.
\end{equation}
Then $\mathcal{Z}=V(u,x,y_1,\dots,y_{m-1})\subset \mathcal{D}_u=\operatorname{div}(u)\subset \mathcal{X}(n,\alpha_i)$ is a smooth triple in $\Sm_k$ satisfying the assumptions 
in \aref{theorem:A1affinemodifications}, 
and:
$$
\widetilde{\mathcal{X}(n,\alpha_{1},\dots,\alpha_{m-3})}(\mathcal{D},\mathcal{Z})
\cong 
\AA^{m}
\sim_{\AA^{1}}
\ast.
$$ 
\end{itemize}
\end{example}

Recall from \aref{subsection:affinemodifications} the defomation space $\mathrm{D}(\mathcal{X},\mathcal{Z})$ of a closed embedding $\mathcal{Z}\hookrightarrow\mathcal{X}$ between smooth schemes.
The composite map
\[
\rho
=
\mathrm{pr}_{2}\circ\pi:
\mathrm{D}(\mathcal{X},\mathcal{Z})
=
\widetilde{\mathcal{X}\times\AA^{1}}(\mathcal{D},\mathcal{Z})
\to
\mathbb{A}^{1}
\]
is flat. 
Its restriction over $\mathbb{A}^{1}\setminus\{0\}$ is isomorphic to the projection on the second factor of $\mathcal{X}\times (\mathbb{A}^{1}\setminus\{0\})$, 
whereas its fiber over $\{0\}$ is canonically isomorphic to the normal bundle $\mathcal{N}_{\mathcal{Z}/\mathcal{X}}$ of $\mathcal{Z}$ in $\mathcal{X}$. 

\begin{corollary}
Let $i:\mathcal{Z}\hookrightarrow \mathcal{X}$ be a closed embedding between $\mathbb{A}^{1}$-contractible
irreducible smooth schemes. Then the simplicial suspension $\Sigma_{s}\mathrm{D}(\mathcal{X},\mathcal{Z})$
of $\mathrm{D}(\mathcal{X},\mathcal{Z})$ is $\mathbb{A}^{1}$-contractible. 
\end{corollary}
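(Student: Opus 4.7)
The plan is to recognize $\mathrm{D}(\mathcal{X},\mathcal{Z})$ as the affine modification of $\mathcal{X}\times\mathbb{A}^{1}$ along the divisor $\mathcal{D}:=\mathcal{X}\times\{0\}$ with center $\mathcal{W}:=\mathcal{Z}\times\{0\}$, as recalled in the paragraph preceding the corollary, and then to apply \aref{theorem:A1affinemodifications} directly to the smooth triple $(\mathcal{W}\subset \mathcal{D}\subset \mathcal{X}\times\mathbb{A}^{1})$. Since $\mathcal{Z}\hookrightarrow\mathcal{X}$ is a closed embedding between smooth schemes, this triple is smooth, so the hypotheses of \aref{defn:affinemodification} are in place and the affine modification equals $\mathrm{D}(\mathcal{X},\mathcal{Z})$.

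First I would verify condition (1) of \aref{theorem:A1affinemodifications}. The divisor $\mathcal{X}\times\{0\}$ is irreducible because $\mathcal{X}$ is. The exceptional fiber $\mathcal{E}_{\mathcal{W}/\mathcal{D}}$ is, again by the discussion preceding the statement, canonically isomorphic to the total space of the normal bundle $\mathcal{N}_{\mathcal{Z}/\mathcal{X}}$; as a vector bundle over the irreducible scheme $\mathcal{Z}$, it is irreducible. Next I would check condition (2): under the identifications $\mathcal{W}\cong \mathcal{Z}$ and $\mathcal{D}\cong \mathcal{X}$, the inclusion $\mathcal{W}\hookrightarrow\mathcal{D}$ is just $i$, and since both $\mathcal{Z}$ and $\mathcal{X}$ map to $\mathrm{Spec}(F)$ by $\mathbb{A}^{1}$-weak equivalences, the two-out-of-three property forces $i$ to be an $\mathbb{A}^{1}$-weak equivalence.

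With both conditions verified, \aref{theorem:A1affinemodifications} provides an $\mathbb{A}^{1}$-weak equivalence
$$
\Sigma_{s}\pi:\Sigma_{s}\mathrm{D}(\mathcal{X},\mathcal{Z})\overset{\sim_{\mathbb{A}^{1}}}{\to}\Sigma_{s}(\mathcal{X}\times\mathbb{A}^{1}).
$$
The target is $\mathbb{A}^{1}$-contractible because $\mathcal{X}$ and $\mathbb{A}^{1}$ are both $\mathbb{A}^{1}$-contractible and the product of $\mathbb{A}^{1}$-contractible smooth schemes is $\mathbb{A}^{1}$-contractible (as $\mathbb{A}^{1}$-weak equivalences are stable under product with a fixed smooth scheme). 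Hence $\Sigma_{s}\mathrm{D}(\mathcal{X},\mathcal{Z})\sim_{\mathbb{A}^{1}}\ast$, as claimed.

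I do not anticipate any serious obstacle: the corollary is essentially a packaging of the observation that the deformation-to-the-normal-cone construction is a smooth affine modification falling squarely within the scope of \aref{theorem:A1affinemodifications}. The only small verification is the irreducibility of the exceptional fiber, which reduces to irreducibility of a vector bundle over an irreducible base; everything else is a direct appeal to the theorem and to the stability of $\mathbb{A}^{1}$-contractibility under products.
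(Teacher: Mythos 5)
Your proposal is correct and follows essentially the same route as the paper: identify $\mathrm{D}(\mathcal{X},\mathcal{Z})$ as the affine modification of $\mathcal{X}\times\mathbb{A}^{1}$ along $\mathcal{X}\times\{0\}$ with center $\mathcal{Z}\times\{0\}$, note that the inclusion of the center into the divisor is an $\mathbb{A}^{1}$-weak equivalence because both are $\mathbb{A}^{1}$-contractible, and apply \aref{theorem:A1affinemodifications} together with $\mathbb{A}^{1}$-contractibility of $\mathcal{X}\times\mathbb{A}^{1}$. Your explicit check of the irreducibility hypothesis (the exceptional fiber being the total space of $\mathcal{N}_{\mathcal{Z}/\mathcal{X}}$ over the irreducible $\mathcal{Z}$) is a detail the paper leaves implicit, but there is no substantive difference.
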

\begin{proof}
Since $\mathcal{Z}$ and $\mathcal{X}$ are both $\mathbb{A}^{1}$-contractible,
the inclusion $i\times\mathrm{id}:\mathcal{Z}=\mathcal{Z}\times\{0\}\hookrightarrow\mathcal{D}=\mathcal{X}\times\left\{ 0\right\}$ is an $\mathbb{A}^{1}$-weak equivalence. 
Theorem \ref{theorem:A1affinemodifications} implies $\Sigma_{s}\pi:\Sigma_{s}\mathrm{D}(\mathcal{X},\mathcal{Z})\to\Sigma_{s}\mathcal{X}$ is an $\mathbb{A}^{1}$-weak equivalence.
We are done since $\mathcal{X}\times\mathbb{A}^{1}$ is $\mathbb{A}^{1}$-contractible. 
\end{proof}

\begin{example}
As examples of deformation spaces we look at the origin, a special line and a particular affine plane in the Koras-Russell $3$-fold $\mathcal{X}=\mathcal{X}(n,\alpha_{i},1):=\{x^{n}z=y^{\alpha_{1}}+t^{\alpha_{2}}+x\}\subset\AA^{4}$ \eqref{equation:firstkind}.
As a matter of notation we write $\mathbb{A}^{4}\times\mathbb{A}^{1}=\mathrm{Spec}(F[x,y,z,t][u])$.
\begin{itemize}
\item $\mathcal{Z}=\left\{(0,0,0,0)\right\} $. 
The ideal of $\mathcal{Z}\times\left\{0\right\}$ in $\mathcal{X}\times\AA^{1}$ is globally generated by the regular sequence $u,y,z,t$. 
The deformation space $\mathrm{D}(\mathcal{X},\mathcal{Z})$ is thus isomorphic to the smooth $4$-fold 
\[
\{x^{n}uz_{1}=(uy_{1})^{\alpha_{1}}+(ut_{1})^{\alpha_{2}}+x\}
\subset
\mathbb{A}^{4}\times\mathbb{A}^{1}=\mathrm{Spec}(F[x,y_{1},z_{1},t_{1}][u]),
\]
and the modification map $\pi:\mathrm{D}(\mathcal{X},\mathcal{Z})\to\mathcal{X}\times\AA^{1}$ is given by $(x,y_{1},z_{1},t_{1},u)\mapsto(x,uy_{1},uz_{1},ut_{1},u)$. 
\item
$\mathcal{Z}=\{x=y=t=0\}$. 
The ideal of $\mathcal{Z}\times\left\{0\right\}$ in $\mathcal{X}\times\AA^{1}$ is globally generated by $u,x,y,t$.
We find that the deformation space $\mathrm{D}(\mathcal{X},\mathcal{Z})$ is isomorphic to the smooth $4$-fold 
\[
\{x_{1}^{n}u^{n-1}z=u^{\alpha_{1}-1}y_{1}{}^{\alpha_{1}}+u^{\alpha_{2}-1}t_{1}{}^{\alpha_{2}}+x_{1}\}
\subset
\mathbb{A}^{4}\times\mathbb{A}^{1}=\mathrm{Spec}(F[x_{1},y_{1},z,t_{1}][u]),
\]
with the modification map $\pi:\mathrm{D}(\mathcal{X},\mathcal{Z})\to\mathcal{X}\times\AA^{1}$ given by $(x_{1},y_{1},z,t_{1},u)\mapsto(ux_{1},uy_{1},z,ut_{1},u)$. 
\item
$\mathcal{Z}=\{z=0\}$. 
The ideal of $\mathcal{Z}\times\left\{0\right\}$ in $\mathcal{X}\times\AA^{1}$ is globally generated by the regular sequence $u,z$. 
The deformation space $\mathrm{D}(\mathcal{X},\mathcal{Z})$ is isomorphic to the smooth $4$-fold
\[
\{x^{n}uz_{1}=y^{\alpha_{1}}+t^{\alpha_{2}}+x\}
\subset
\mathbb{A}^{4}\times\mathbb{A}^{1}=\mathrm{Spec}(F[x,y,z_{1},t][u]),
\]
and the modification map $\pi:\mathrm{D}(\mathcal{X},\mathcal{Z})\to\mathcal{X}\times\AA^{1}$ is given by $(x,y,z_{1},t,u)\mapsto(x,y,uz_{1},t,u)$. \\
\end{itemize}
\vspace{-0.1in}
In each case, 
$\mathrm{D}(\mathcal{X},\mathcal{Z})$ is smooth and the flat map $\rho:\mathrm{D}(\mathcal{X},\mathcal{Z})\to\mathbb{A}^{1}$ coincides with the restriction of 
$\mathrm{pr}_{u}\colon\mathbb{A}^{4}\times\mathbb{A}^{1}\to\mathbb{A}^{1}$. 
Here $\rho$ restricts to a trivial bundle $\mathcal{X} \times (\mathbb{A}^{1}\setminus\{0\})$ over $(\mathbb{A}^{1}\setminus\{0\})$ and $\rho^{-1}(0)$ is isomorphic to $\mathbb{A}^{3}$. 
Furthermore, 
since $\mathcal{X}$ is $\mathbb{A}^{1}$-contractible,
so is the simplicial suspension $\Sigma_{s}\mathrm{D}(\mathcal{X},\mathcal{Z})$.
We do not know in any of these cases whether $\mathrm{D}(\mathcal{X},\mathcal{Z})$ is $\mathbb{A}^{1}$-contractible. 
When the base field $F$ is $\mathbb{C}$, 
it follows from \aref{theorem:Kaliman-Zaidenbergcriterion} that $\mathrm{D}(\mathcal{X},\mathcal{Z})$ is contractible. 
\end{example}

\subsection{$\AA^{1}$-chain connectedness}
\label{subsection: A1-chainconnected}
We say that an $F$-scheme $\mathcal{X}\in\Sch_{F}$ is $\AA^{1}$-chain connected if $\mathcal{X}(E)\neq\emptyset$ for every finitely generated separable field extension $E/F$, 
and for all rational points $x,y\in \mathcal{X}(E)$ there exist rational points $x=x_{0},x_{1},\dots,x_{n-1},x_{n}=y\in \mathcal{X}(E)$ and an elementary $\AA^{1}$-equivalence 
$f_{i}:\AA^{1}_{E}\to \mathcal{X}$ between $x_{i-1}$ and $x_{i}$ for $1\leq i\leq n$.
Equivalently, 
$\pi_{0}(\SSing^{\AA^{1}}(\mathcal{X})(E))=0$ for the $\AA^{1}$-singular functor of \cite{MR1813224} and every $E/F$ as above.

\begin{remark}
Our definition of $\AA^{1}$-chain connectedness is that of \cite[Definition 2.2.2]{AsokOttawa}, \cite[Definition 2.2.2]{MR2803793} generalized to $\Sch_{F}$.
All $\AA^{1}$-chain connected smooth schemes are $\AA^{1}$-connected \cite[Proposition 2.2.7]{MR2803793}. 
The converse implication is false by \cite[\S4]{zbMATH06473189}.
\end{remark}

\begin{question}
\label{question:A1contractibleimpliesA1chainconnected}
Is every $\AA^{1}$-contractible smooth scheme also $\AA^{1}$-chain connected? 
\end{question}

We give some supporting examples for \aref{question:A1contractibleimpliesA1chainconnected} based on the affine modifications in \aref{table:1}. 
Antieau \cite[Proposition 2.4.6]{AsokOttawa} observed the following result for the Russell cubic $\mathcal{R}$ in \eqref{equation:russell3fold}.
\begin{example}
\label{example:RussellcubicA1-chainconnected}
Every Koras-Russell $3$-fold of the first kind $\mathcal{X}(n,\alpha_{i},1)$ in \eqref{equation:firstkind} is $\AA^{1}$-chain connected.
\end{example}
\begin{proof}
Let $E/F$ be a finitely generated separable field extension.
For rational points $0,(y_{0},t_{0})\in \Gamma_{\alpha_{1},\alpha_{2}}(E)$ on the cuspidal curve $\{y^{\alpha_{1}}+t^{\alpha_{2}}=0\}$ there is an elementary $\AA^{1}$-equivalence 
$\AA^{1}\to \Gamma_{\alpha_{1},\alpha_{2}}$ given by: 
$$
s\mapsto (y_{0}s^{\alpha_{2}},t_{0}s^{\alpha_{1}}).
$$
Thus the fibers $\AA^{2}_{y,t}$ and $\Gamma_{\alpha_{1},\alpha_{2}}\times\AA^{1}_{z}$ of the projection $\pr_{x}:\mathcal{X}(n,\alpha_{i},1)\to \AA^{1}_{x}$ are $\AA^{1}$-chain connected.

To construct elementary $\AA^{1}$-equivalences between points in different fibers we choose polynomials $y(s),t(s)\in F[s]$ such that $s^{n}$ divides $y(s)^{\alpha_{1}}+t(s)^{\alpha_2}+s$. 
For a rational point $a\in \AA^{1}(E)$ the map $\AA^{1}\to \mathcal{X}(n,\alpha_{i},1)$ defined by 
$$
s
\mapsto 
(x(s),z(s),y(s),t(s))=(as, \frac{y(as)^{\alpha_{1}}+t(as)^{\alpha_2}+as}{(as)^{n}}, y(as),t(as))
$$ 
connects a point in the $0$-fiber with a point in the $a\neq 0$-fiber of $\operatorname{pr}_{x}$.
\end{proof}
\begin{remark}
For the Russell cubic $\mathcal{R}$ we can choose the elementary $\AA^{1}$-equivalence $\AA^{1}\to \mathcal{R}$ defined by: 
$$
s
\mapsto 
(x(s),z(s),y(s),t(s))=(as,-2-as,as+1,-as-1).
$$
\end{remark}

\begin{example}
\label{example:BPA1chain}
Every Brieskorn-Pham surface $\mathcal{S}_{\alpha_{i}}$ in \eqref{equation:BPsurface} is $\AA^{1}$-chain connected.
\end{example}
\begin{proof}
For $a_{i}\in \mathcal{S}_{\alpha_{i}}(E)$ and $E/F$ as above,
the map $\AA^{1}\to \mathcal{S}_{\alpha_{i}}$ defined by 
$$
s
\mapsto 
(a_{1}s^{\alpha_{2}\alpha_{3}}, a_{2}s^{\alpha_{1}\alpha_{3}},a_{3}s^{\alpha_{1}\alpha_{2}})
$$ 
is an elementary $\AA^{1}$-equivalence between the rational points $0$, $a_{i}\in \mathcal{S}_{\alpha_{i}}(E)$.
\end{proof}

\begin{example}
The $4$-fold $\mathcal{Y}(2,(2,3,5),1):=\{x^{2}z=y_{1}^{2}+y_2^{3}+y_3^{5}+x\}\subset\AA^{5}$ in \eqref{equation:4dimensional} is $\AA^{1}$-chain connected. 
The fibers $\AA^{3}$ and $\mathcal{S}_{2,3,5}$ of the projection map $\pr_{x}:\mathcal{Y}(2,(2,3,5),1)\to \AA^{1}_{x}$ are $\AA^{1}$-chain connected (see \aref{example:BPA1chain}).
An elementary $\AA^{1}$-equivalence between the $0$-fiber and the $a\neq 0$-fiber is given by: 
$$
s
\mapsto 
(x(s),z(s),y_{1}(s),y_2(s),y_3(s))=(as,-2-as,as+1,-as-1,0).
$$
\end{example}

\begin{example}
\label{example:tomDieck-Petriesurfaces}
For coprime integers $k>l\geq 2$, 
the smooth tom Dieck-Petrie surface 
\begin{equation}
\label{equation:tomDieck-Petriesurfaces}
\mathcal{V}_{k,l}
:=
\{\frac{(xz+1)^k-(yz+1)^l-z}{z}=0\}
\subset 
\AA^{2}
\end{equation}
is the affine modification of $\mathcal{Z}=(1,1)\subset \mathcal{D}=\{x^k-y^l=0\} \subset \mathcal{X}=\AA^{2}$.
It is topologically contractible by \aref{theorem:Kaliman-Zaidenbergcriterion}, 
and stably $\AA^{1}$-contractible by \aref{theorem:stableA1affinemodification} and \cite[Proposition 3.2]{MR3549169}. 
However, $\mathcal{V}_{k,l}$ has logarithmic Kodaira dimension $\overline{\kappa}(\mathcal{V}_{k,l})=1$, 
and thus it is not $\AA^{1}$-chain connected (see \aref{subsection:a1contractiblesmoothaffinesurfaces}).
This example shows that the affine modification construction does not preserve $\AA^{1}$-chain connectedness.
\end{example}

\subsection{$\AA^{1}$-contractible smooth affine surfaces}
\label{subsection:a1contractiblesmoothaffinesurfaces}
Let $F$ be an algebraically closed field of characteristic $0$.
$\AA^{1}$-contractible smooth affine curves over $F$ are isomorphic to the affine line $\AA^{1}$ by \cite[Claim 5.7]{MR2335246}.
Let $\mathcal{S}$ be an $\AA^{1}$-contractible smooth affine surface over $F$. 
By \cite[Lemma 2.3.8]{AsokOttawa} $\AA^{1}$-contractibility implies topological contractibility.
From \cite{MR687591} it follows that $\mathcal{S}$ has logarithmic Kodaira dimension $\overline{\kappa}(\mathcal{S})$ equal to $-\infty$, $1$, or $2$. 
In case $\overline{\kappa}(\mathcal{S})=-\infty$, 
the birational classification results due to Miyanishi, Sugie and Fujita shows $\mathcal{S}$ contains a nonempty open subset isomorphic to $\AA^{1}\times\mathcal{C}$, 
where $\mathcal{C}$ is a smooth affine curve \cite{MR564667}. 
By Miyanishi's algebraic characterization of the affine plane we conclude $\mathcal{S}$ is isomorphic to $\AA^{2}$ \cite{MR0419460}. 
This reduces the question whether $\AA^{2}$ is the only smooth affine $\AA^{1}$-contractible surface to the following.

\begin{question}
\label{question:A1contractiblenegativelogkodaira}
Does every $\AA^{1}$-contractible smooth surface over $F$ have negative logarithmic Kodaira dimension?
(An affirmative answer to \aref{question:A1contractibleimpliesA1chainconnected} would show this is the case.)
\end{question}

\begin{remark}
The Zariski cancellation problem for $\mathcal{X}$ a smooth affine $d$-dimensional scheme asks whether $\mathcal{X}\times\AA^{1} \cong \AA^{d+1}$ implies $\mathcal{X}\cong \AA^{d}$.
This has been settled affirmatively for curves by Abhyankar, Eakin, and Heinzer \cite{AEH}, and for surfaces by Fujita \cite{Fujita}, Miyanishi and Sugie \cite{MR564667}.
When $d\geq 3$ this remains an open problem over fields of characteristic zero, 
while there exists counterexamples over fields of positive characteristic;
we refer to \cite{MR3438041} for a survey.
Note that if $\mathcal{X}$ is stably isomorphic to an affine space then it is $\AA^{1}$-contractible, 
e.g., 
the Russell cubic $\mathcal{R}$ is a potential counterexample to Zariski cancellation.
\end{remark}

All $\AA^{1}$-chain connected smooth schemes in the sense of Asok-Morel \cite{MR2803793} are log-uniruled and hence of negative logarithmic Kodaira dimension \cite{MR3351185}. 
This implies the following classification result.
\begin{prop}
\label{prop:isotoaffineplane}
An $\AA^{1}$-contractible and $\AA^{1}$-chain connected smooth affine surface over an algebraically closed field of characteristic $0$ is isomorphic to the affine plane $\AA^{2}$.
\end{prop}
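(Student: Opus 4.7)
The plan is a three-step synthesis of results already gathered in \aref{subsection:a1contractiblesmoothaffinesurfaces}. First I would use $\AA^{1}$-chain connectedness to control the logarithmic Kodaira dimension: by the log-uniruledness criterion of \cite{MR3351185} recalled just above the statement, any $\AA^{1}$-chain connected smooth variety is log-uniruled, and for a surface this forces $\overline{\kappa}(\mathcal{S})=-\infty$. This is the only step where the $\AA^{1}$-chain connectivity hypothesis enters, and it is the key content that distinguishes the $\AA^{1}$-chain connected case from the general $\AA^{1}$-contractible one discussed in \aref{question:A1contractiblenegativelogkodaira}.

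Second, I would invoke the birational classification of smooth affine surfaces of negative logarithmic Kodaira dimension due to Miyanishi-Sugie and Fujita \cite{MR564667}: such a surface $\mathcal{S}$ contains a nonempty Zariski open subset of the form $\AA^{1}\times\mathcal{C}$ for some smooth affine curve $\mathcal{C}$. In particular $\mathcal{S}$ is $\AA^{1}$-ruled, and there is a nontrivial $\AA^{1}$-fibration on $\mathcal{S}$.

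Third, I would use $\AA^{1}$-contractibility to pin down $\mathcal{S}$. By \cite[Lemma 2.3.8]{AsokOttawa}, $\AA^{1}$-contractibility of $\mathcal{S}$ implies topological contractibility, so $\mathcal{S}$ is simply connected, with trivial Picard group and trivial group of invertible regular functions. A smooth topologically contractible affine surface equipped with an $\AA^{1}$-fibration satisfies the hypotheses of Miyanishi's algebraic characterization of the affine plane \cite{MR0419460}, which concludes $\mathcal{S}\cong\AA^{2}$. The only step with genuine mathematical content is the second one, a deep result from affine surface theory; since the plan just cites it, the real verification is routine bookkeeping that the numerical invariants coming from topological contractibility match the exact hypotheses of Miyanishi's theorem.
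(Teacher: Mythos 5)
Your proposal follows exactly the same route as the paper: $\AA^{1}$-chain connectedness gives log-uniruledness and hence $\overline{\kappa}(\mathcal{S})=-\infty$ by \cite{MR3351185}, the Miyanishi-Sugie-Fujita classification \cite{MR564667} then produces the open cylinder $\AA^{1}\times\mathcal{C}$, and topological contractibility (from \cite[Lemma 2.3.8]{AsokOttawa}) combined with Miyanishi's algebraic characterization \cite{MR0419460} yields $\mathcal{S}\cong\AA^{2}$. The argument is correct and matches the paper's proof step for step.
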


A landmark result due to Ramanujam \cite{MR0286801} tells us that any smooth complex surface whose underlying analytic space is contractible and simply connected at infinity is 
isomorphic to $\AA^{2}$.
\footnote{An open manifold $\mathcal{M}$ is said to be simply connected at infinity if for every compact subset $\mathcal{C}\subset\mathcal{M}$,
there is a compact subset $\mathcal{C}'$ such that $\mathcal{C}\subset\mathcal{C}'\subset\mathcal{M}$ and $\mathcal{M}\setminus\mathcal{C}'$ is connected and simply connected.}
\begin{question}
Are $\AA^{1}$-contractible smooth affine surfaces simply connected at infinity?
(An affirmative answer to \aref{question:A1contractiblenegativelogkodaira} would show this is the case.)
\end{question}

\begin{remark}
There exist non $\AA^{1}$-chain connected topologically contractible smooth affine surfaces non-isomorphic to $\AA^{2}$,
e.g., 
the Ramanujam surface \cite{MR0286801} and the tom Dieck-Petrie surfaces (see \aref{example:tomDieck-Petriesurfaces}).
Proposition 4.1 in \cite{MR2423761} shows that all vector bundles on these surfaces are trivial,
i.e., 
algebraic $K$-groups cannot disprove $\AA^{1}$-contractibility of these surfaces.
\end{remark}

\section{Higher Chow groups calculations and stable $\AA^{1}$-contractibility}
\label{section:hcgc}
In the following we will make use of a criterion for stable $\AA^{1}$-contractibility shown in \cite[$\S4$]{MR3549169}.

\begin{theorem}
\label{theorem:higherChowGroups}
Suppose $\mathcal{X}$ is a smooth scheme equipped with a rational point $x\in\mathcal{X}(F)$.
If the natural map $\mathcal{X}\times\mathcal{Y}\to\mathcal{Y}$ induces an isomorphism on higher Chow groups for any smooth affine scheme $\mathcal{Y}$, 
then $\mathcal{X}$ is stably $\AA^{1}$-contractible in the sense that it becomes $\AA^{1}$-contractible after a finite suspension with the projective line pointed at infinity:
\[
(\mathcal{X},x)\wedge \PP^{1}\wedge \cdots \wedge \PP^{1}\sim_{\AA^{1}}\ast.
\]
\end{theorem}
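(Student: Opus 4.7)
The plan is to combine Voevodsky's comparison of higher Chow groups with motivic cohomology with a motivic Hurewicz/convergence argument in stable motivic homotopy theory.

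First I would translate the hypothesis into the language of motivic cohomology. By Voevodsky's comparison, for smooth schemes over the perfect field $F$, Bloch's higher Chow groups $\CH^{*}(-,*)$ compute motivic cohomology $H^{*,*}(-;\ZZ)$. The assumption that $\mathcal{X}\times\mathcal{Y}\to\mathcal{Y}$ induces an isomorphism on higher Chow groups for every smooth affine $\mathcal{Y}$ is therefore equivalent to the statement that the reduced motivic cohomology of $(\mathcal{X},x)\wedge \mathcal{Y}_{+}$ vanishes in every bidegree. Since every smooth $F$-scheme is Nisnevich-locally affine and since the representing spectrum $H\ZZ\in\SH(F)$ is compatible with Nisnevich descent, this vanishing extends to all smooth $\mathcal{Y}$, so the $H\ZZ$-module $H\ZZ\wedge\Sigma^{\infty}_{\PP^{1}}(\mathcal{X}/x)$ has trivial bigraded homotopy, hence is zero in $\SH(F)$. (One can alternatively phrase this as saying the reduced Voevodsky motive $\tilde{M}(\mathcal{X},x)$ vanishes in $DM(F)$.)

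Second, I would upgrade this from an $H\ZZ$-level statement to an honest stable $\AA^{1}$-contractibility statement. Because $\mathcal{X}$ is smooth and pointed, $\Sigma^{\infty}_{\PP^{1}}(\mathcal{X}/x)$ is connective in Morel's homotopy $t$-structure on $\SH(F)$; combined with the vanishing of $H\ZZ\wedge\Sigma^{\infty}_{\PP^{1}}(\mathcal{X}/x)$, a motivic Hurewicz/Whitehead argument, or equivalently convergence of the slice spectral sequence whose $E_{2}$-term is built from motivic cohomology, forces $\Sigma^{\infty}_{\PP^{1}}(\mathcal{X}/x)=0$ in $\SH(F)$.

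Finally, the stable-to-unstable step: for the fixed smooth pointed scheme $(\mathcal{X},x)$, the space $(\mathcal{X},x)\wedge\PP^{1\wedge n}$ becomes increasingly $\AA^{1}$-connected as $n$ grows, and a motivic version of the Freudenthal suspension theorem guarantees that the stabilization map
\[
\pi_{*,*}^{\AA^{1}}\!\bigl((\mathcal{X},x)\wedge\PP^{1\wedge n}\bigr)\rto\pi_{*,*}^{\AA^{1}}\!\bigl(\Sigma^{\infty}_{\PP^{1}}(\mathcal{X}/x)\bigr)
\]
is an isomorphism in a range depending on the connectivity. For $n$ sufficiently large these ranges cover the whole homotopy of the finite-dimensional pointed space $(\mathcal{X},x)\wedge\PP^{1\wedge n}$, so the stable vanishing from the previous step yields an $\AA^{1}$-weak equivalence $(\mathcal{X},x)\wedge\PP^{1\wedge n}\sim_{\AA^{1}}\ast$, as required.

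The main obstacle is the passage from cohomological vanishing to stable triviality: one must ensure that the slice (or motivic Adams) spectral sequence converges for the non-compact object at hand, and that Morel's connectivity theorem applies to give the requisite lower bound in the homotopy $t$-structure. The translation of hypothesis to motive triviality and the final Freudenthal-type comparison are comparatively formal once this convergence input is in place.
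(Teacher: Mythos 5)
The paper does not supply its own proof of this statement: it is imported wholesale from \cite[\S 4]{MR3549169}, so your proposal has to be measured against the argument given there. Your first step agrees with that argument in substance (higher Chow groups compute motivic cohomology, and together with dualizability of $\Sigma^{\infty}_{\PP^{1}}(\mathcal{X}/x)$ for smooth $\mathcal{X}$ in characteristic zero the hypothesis yields $H\ZZ\wedge\Sigma^{\infty}_{\PP^{1}}(\mathcal{X}/x)\simeq 0$, equivalently $\tilde{M}(\mathcal{X})=0$ in $DM(F)$). The second step, however, has a real gap as written: $H\ZZ\wedge(-)$ is \emph{not} conservative on connective objects of $\SH(F)$. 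The motivic Hopf map $\eta$ acts as zero on $H\ZZ$, so for instance $H\ZZ\wedge\eta^{-1}\sphere\simeq 0$ while $\eta^{-1}\sphere\neq 0$; connectivity in Morel's homotopy $t$-structure plus vanishing of $H\ZZ$-homology therefore does not force a spectrum to vanish, and there is no ``motivic Hurewicz/Whitehead'' theorem of the strength you invoke. What rescues the argument for $\Sigma^{\infty}_{\PP^{1}}(\mathcal{X}/x)$ is that this spectrum is compact and \emph{effective}: one then needs Levine's convergence theorem for the slice tower of compact effective spectra over a field of characteristic zero, together with the facts that the slices are $H\ZZ$-modules and that taking slices is compatible with $H\ZZ\wedge(-)$, to conclude that all slices vanish and hence that the spectrum itself vanishes. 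You correctly single out convergence as the crux, but effectivity is an essential hypothesis missing from your write-up, and without it the step is simply false.

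The final, stable-to-unstable step is also carried out by the wrong mechanism. $\GG_{m}$-suspension does not raise connectivity in Morel's $t$-structure, so $(\mathcal{X},x)\wedge(\PP^{1})^{\wedge n}$ does not become ``increasingly $\AA^{1}$-connected'' in the bigraded sense, and its $\AA^{1}$-homotopy sheaves do not vanish above any bound determined by its dimension; consequently no Freudenthal-type range ever ``covers the whole homotopy''. No such $\PP^{1}$-Freudenthal theorem is needed. Since $(\mathcal{X},x)$ is a compact object of the pointed $\AA^{1}$-homotopy category, one has the purely formal identification
\[
[\Sigma^{\infty}_{\PP^{1}}(\mathcal{X},x),\Sigma^{\infty}_{\PP^{1}}(\mathcal{X},x)]_{\SH(F)}
\cong
\colim_{n}\,[(\mathcal{X},x)\wedge(\PP^{1})^{\wedge n},(\mathcal{X},x)\wedge(\PP^{1})^{\wedge n}]_{\Ho_{\bullet}(F)},
\]
so the vanishing of the stable identity class forces the identity map of $(\mathcal{X},x)\wedge(\PP^{1})^{\wedge n}$ to become null-homotopic at some finite stage $n$, which is precisely the assertion. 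This compactness argument is the one used in the cited reference; it requires no connectivity estimates at all.
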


\begin{theorem}
\label{theorem:stableA1affinemodification}
Let $\mathcal{X}\in\Sch_{F}$ be an integral scheme. 
Suppose $\mathcal{D}$ is an effective Cartier divisor on $\mathcal{X}$, 
and $\mathcal{Z}$ a closed subscheme of $\mathcal{X}$ with ideal sheaf $\II_{\mathcal{Z}}\subset\mathscr{O}_{\mathcal{X}}(-\mathcal{D})$ which is a local complete intersection in $\mathcal{D}$.
\begin{enumerate}
\item[(i)] If for every smooth affine scheme $\mathcal{Y}$, 
the maps $\pr_{\mathcal{Y}}:\mathcal{Z}\times \mathcal{Y}\to \mathcal{Y}$, $\pr_{\mathcal{Y}}:\mathcal{D}\times \mathcal{Y}\to \mathcal{Y}$, 
and $(f,\pr_{\mathcal{Y}}):\mathcal{X}\times \mathcal{Y}\to \AA^{1}\times \mathcal{Y}$ induce isomorphisms on higher Chow groups, 
then so does $\widetilde{\mathcal{X}}(\mathcal{D},\mathcal{Z})\times \mathcal{Y}\to \mathcal{Y}$. 
In addition,
if the affine modification $\widetilde{\mathcal{X}}(\mathcal{D},\mathcal{Z})$ is smooth, 
then it is stably $\AA^{1}$-contractible.
\item[(ii)] If for every smooth affine scheme $\mathcal{Y}$, 
the maps $\pr_{\mathcal{Y}}:\mathcal{Z}\times \mathcal{Y}\to \mathcal{Y}$, $\pr_{\mathcal{Y}}:\mathcal{D}\times \mathcal{Y}\to \mathcal{Y}$, 
and $(f,\pr_{\mathcal{Y}}):\widetilde{\mathcal{X}}(\mathcal{D},\mathcal{Z})\times \mathcal{Y}\to \AA^{1}\times \mathcal{Y}$ induce isomorphisms on higher Chow groups, 
then so does $\mathcal{X}\times \mathcal{Y}\to \mathcal{Y}$. 
In addition, 
if $\mathcal{X}$ is smooth, 
then it is stably $\AA^{1}$-contractible.
\end{enumerate}
\end{theorem}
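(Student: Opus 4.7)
The plan is to compare Bloch's localization long exact sequences for the closed-open decompositions attached to $\mathcal{X}$ and $\widetilde{\mathcal{X}}:=\widetilde{\mathcal{X}}(\mathcal{D},\mathcal{Z})$ with that for the pair $(\mathbb{A}^{1},\{0\})$, and to deduce the desired higher Chow group isomorphisms by iterated five lemma arguments. As in the proof of \aref{theorem:A1affinemodifications}, the l.c.i.\ assumption on $\mathcal{Z}\subset\mathcal{D}$ guarantees that the proper transform of $\mathcal{D}$ coincides with its strict transform, so $\pi_{\mathcal{D},\mathcal{Z}}$ restricts to an isomorphism $\widetilde{\mathcal{X}}\setminus\mathcal{E}_{\mathcal{Z}/\mathcal{D}}\cong \mathcal{X}\setminus\mathcal{D}$ under which $f\circ\pi_{\mathcal{D},\mathcal{Z}}$ corresponds to $f$, while the exceptional fiber $p_{\vert}:\mathcal{E}_{\mathcal{Z}/\mathcal{D}}\to\mathcal{Z}$ is a locally trivial $\mathbb{A}^{d}$-bundle with $d=\codim_{\mathcal{D}}(\mathcal{Z})$. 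Base changing everything by $\mathcal{Y}$ produces two commutative ladders of localization sequences in higher Chow groups: the first compares $(\mathcal{X}\times\mathcal{Y},\mathcal{D}\times\mathcal{Y})$ with $(\mathbb{A}^{1}\times\mathcal{Y},\{0\}\times\mathcal{Y})$ via pullback along $(f,\pr_{\mathcal{Y}})$, the second compares $(\widetilde{\mathcal{X}}\times\mathcal{Y},\mathcal{E}_{\mathcal{Z}/\mathcal{D}}\times\mathcal{Y})$ with $(\mathbb{A}^{1}\times\mathcal{Y},\{0\}\times\mathcal{Y})$ via $(f\circ\pi_{\mathcal{D},\mathcal{Z}},\pr_{\mathcal{Y}})$. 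All three closed immersions involved are of codimension one, so the index shifts align.

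For part (i), I first plug the hypotheses into the first ladder: combined with $\mathbb{A}^{1}$-homotopy invariance for $\{0\}\times\mathcal{Y}\hookrightarrow\mathbb{A}^{1}\times\mathcal{Y}$, the $\mathcal{D}$- and $(f,\pr_{\mathcal{Y}})$-hypotheses render three out of every four vertical arrows isomorphisms, and the five lemma produces an isomorphism $\CH^{p}(\mathbb{G}_{m}\times\mathcal{Y},n)\to\CH^{p}((\mathcal{X}\setminus\mathcal{D})\times\mathcal{Y},n)$. Turning to the second ladder, the $\mathcal{Z}$-hypothesis together with $\mathbb{A}^{1}$-homotopy invariance for the affine bundle $p_{\vert}$ supplies the column over $\mathcal{E}_{\mathcal{Z}/\mathcal{D}}$, and the previous step supplies the column over $\widetilde{\mathcal{X}}\setminus\mathcal{E}_{\mathcal{Z}/\mathcal{D}}=\mathcal{X}\setminus\mathcal{D}$; a second five lemma application then yields that $(f\circ\pi_{\mathcal{D},\mathcal{Z}},\pr_{\mathcal{Y}})^{*}$ is an isomorphism, hence $\pr_{\mathcal{Y}}:\widetilde{\mathcal{X}}\times\mathcal{Y}\to\mathcal{Y}$ is an isomorphism on higher Chow groups by homotopy invariance for $\mathbb{A}^{1}\times\mathcal{Y}\to\mathcal{Y}$. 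When $\widetilde{\mathcal{X}}$ is smooth it admits an $F$-rational point since $F$ is algebraically closed, and \aref{theorem:higherChowGroups} delivers stable $\mathbb{A}^{1}$-contractibility.

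Part (ii) runs the same two ladders in reverse order: the $\mathcal{Z}$-hypothesis together with the bundle structure on $p_{\vert}$ and the new hypothesis on $(f\circ\pi_{\mathcal{D},\mathcal{Z}},\pr_{\mathcal{Y}})$ produce three iso verticals in the second ladder, so the five lemma yields an isomorphism on the common open part $\mathcal{X}\setminus\mathcal{D}$; combining this with the $\mathcal{D}$-hypothesis in the first ladder produces three iso verticals there, and a final five lemma application delivers $\CH^{*}(\mathbb{A}^{1}\times\mathcal{Y},*)\cong\CH^{*}(\mathcal{X}\times\mathcal{Y},*)$. Homotopy invariance and \aref{theorem:higherChowGroups} finish as in (i).

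The delicate point throughout is ensuring that these comparison ladders genuinely commute, i.e., that the pullbacks $(f,\pr_{\mathcal{Y}})^{*}$ and $(f\circ\pi_{\mathcal{D},\mathcal{Z}},\pr_{\mathcal{Y}})^{*}$ are compatible with Bloch's connecting maps at the chain level. Since $f$ and $f\circ\pi_{\mathcal{D},\mathcal{Z}}$ define effective Cartier divisors whose fibers over $0\in\mathbb{A}^{1}$ have the expected codimension, both are flat, so the required naturality of the localization sequence falls within Bloch's standard setup and no ad hoc moving-lemma argument is needed.
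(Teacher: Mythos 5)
Your proof is correct and follows essentially the same route as the paper: the same two localization-sequence ladders compared with the pair $(\AA^{1}\times\mathcal{Y},\{0\}\times\mathcal{Y})$, the same use of the locally trivial $\AA^{d}$-bundle structure of the exceptional fiber granted by the l.c.i.\ hypothesis, and the same pair of five-lemma applications (run in the opposite order for part (ii)), concluding via homotopy invariance and \aref{theorem:higherChowGroups}. The only addition is your closing remark on the naturality of Bloch's boundary maps under the comparison pullbacks, a point the paper's proof takes for granted.
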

\begin{proof}
Let $\mathcal{V}\mathcal{W}$ be shorthand for the fiber product $\mathcal{V}\times\mathcal{W}$ in $\Sch_{F}$, 
let $\widetilde{\mathcal{X}}$ be shorthand for $\widetilde{\mathcal{X}}(\mathcal{D},\mathcal{Z})$, 
and set $\mathcal{E}:=\mathcal{E}_{\mathcal{Z}/\mathcal{D}}$.
To prove $\text{(i)}$, 
note that $\mathcal{E}$ and $\mathcal{U}=\widetilde{\mathcal{X}}\setminus \mathcal{E} $ give rise to a commutative diagram of exact localization sequences for higher Chow groups: 
\begin{equation}
\label{equation:chowgroupaffinemodification}
\xymatrix{
\CH_*(\GG_m \mathcal{Y},i+1)  \ar[r] \ar[d]_{(1)} & 
\CH_*(\mathcal{Y},i) \ar[r]\ar[d]_{(2)} &
\CH_*(\AA^{1} \mathcal{Y},i)  \ar[r]\ar[d]_{(3)} &  
\CH_*(\GG_m \mathcal{Y},i)  \ar[r]\ar[d]_{(4)} & 
\CH_*(\mathcal{Y},i-1)\ar[d]_{(5)}\\ 
\CH_*(\mathcal{U}\mathcal{Y},i+1)  \ar[r] & 
\CH_*(\mathcal{E} \mathcal{Y},i) \ar[r] &
\CH_*(\widetilde{\mathcal{X}} \mathcal{Y},i)  \ar[r]&  
\CH_*(\mathcal{U}\mathcal{Y},i)  \ar[r] & 
\CH_*(\mathcal{E} \mathcal{Y},i-1).  \\ 
}
\end{equation}
Here $\pi_{\mathcal{D}/\mathcal{Z}}\vert_{\mathcal{E}}:\mathcal{E}\to\mathcal{Z}$ is a locally trivial $\AA^d$-bundle, 
$d=\operatorname{codim}_{\mathcal{D}}(\mathcal{Z})$, 
by the assumption on $\mathcal{Z}$.
Hence, the map $\pr_{\mathcal{Y}}:\mathcal{E}\to \mathcal{Y}$ induces an isomorphism on higher Chow groups by hypothesis and homotopy invariance.
It follows that $\text{(2)}$ and $\text{(5)}$ in \eqref{equation:chowgroupaffinemodification} are isomorphisms.
Furthermore, 
$\mathcal{U}\cong \mathcal{X}\setminus \mathcal{D}$ and since the maps $\pr_{\mathcal{Y}}:\mathcal{D}\mathcal{Y}\to \mathcal{Y}$ and 
$(f,\pr_{\mathcal{Y}}):\mathcal{X}\mathcal{Y}\to \AA^{1}\mathcal{Y}$ induce isomorphisms on higher Chow groups, 
applying the five lemma to \eqref{equation:chowgroupaffinemodification2} shows $\text{(1)}$ and $\text{(4)}$ in \eqref{equation:chowgroupaffinemodification} are isomorphisms:
\begin{equation}
\label{equation:chowgroupaffinemodification2}
\xymatrix{
\CH_*(\mathcal{Y},i)  \ar[r]\ar[d] & 
\CH_*(\AA^{1} \mathcal{Y},i) \ar[r]\ar[d] &
\CH_*(\GG_m \mathcal{Y},i)  \ar[r]\ar[d]&  
\CH_*(\mathcal{Y},i-1)  \ar[r]\ar[d] & 
\CH_*(\AA^{1} \mathcal{Y},i-1)\ar[d]\\ 
\CH_*(\mathcal{D} \mathcal{Y},i)  \ar[r] & 
\CH_*(\mathcal{X} \mathcal{Y},i) \ar[r] &
\CH_*(\mathcal{U}\mathcal{Y},i)  \ar[r]&  
\CH_*(\mathcal{D} \mathcal{Y},i-1)  \ar[r] & 
\CH_*(\mathcal{X} \mathcal{Y},i-1). \\ 
}
\end{equation}
It follows that $\text{(3)}$ in \eqref{equation:chowgroupaffinemodification} is an isomorphism, 
and we conclude by homotopy invariance.
\aref{theorem:higherChowGroups} implies the assertion of stable $\AA^{1}$-contractibility.

Part $\text{(ii)}$ is proved similarly by replacing $\widetilde{\mathcal{X}}$ with $\mathcal{X}$ and interchanging the roles of $\mathcal{E}$ and $\mathcal{D}$.
\end{proof}

\begin{corollary}
\label{corollary:deformed KR stable}
The deformed Koras-Russel $3$-folds of the first kind in \eqref{equation:firstkindpolynomial} are stably $\AA^{1}$-contractible.
\end{corollary}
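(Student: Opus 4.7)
The plan is to apply Theorem~\ref{theorem:stableA1affinemodification}(i) to the presentation recorded in Table~\ref{table:1}, namely $\mathcal{X}(n,\alpha_{i},p)=\widetilde{\mathcal{X}}(\mathcal{D},\mathcal{Z})$ with $\mathcal{X}=\AA^{3}$, Cartier divisor $\mathcal{D}=\ddiv(x^{n})$, and center $\mathcal{Z}=V(x^{n},y^{\alpha_{1}}+t^{\alpha_{2}}+xp)$. The preliminary checks are routine: inside $\mathcal{D}=\Spec(F[x,y,t]/(x^{n}))$ the subscheme $\mathcal{Z}$ is cut out by the single equation $y^{\alpha_{1}}+t^{\alpha_{2}}+xp=0$, hence is a local complete intersection there, and $\mathcal{X}(n,\alpha_{i},p)$ is smooth by the Jacobian criterion using $p(0,0,0)\in F^{\ast}$.

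The core task is to verify, for every smooth affine $\mathcal{Y}$, the three higher Chow group isomorphisms required by Theorem~\ref{theorem:stableA1affinemodification}(i). Two of these are essentially formal. Since the composition $\AA^{3}\times \mathcal{Y}\xrightarrow{(x^{n},\pr_{\mathcal{Y}})}\AA^{1}\times \mathcal{Y}\xrightarrow{\pr_{\mathcal{Y}}}\mathcal{Y}$ is the trivial $\AA^{3}$-bundle projection, two-out-of-three combined with homotopy invariance forces $(x^{n},\pr_{\mathcal{Y}})$ to be an isomorphism on higher Chow groups. Because Bloch's groups are built from integral cycles, and so are insensitive to nilpotent thickenings, the map $\pr_{\mathcal{Y}}:\mathcal{D}\times \mathcal{Y}\to \mathcal{Y}$ reduces to the homotopy-invariance iso for $\AA^{2}\times \mathcal{Y}\to \mathcal{Y}$, since $\mathcal{D}_{\mathrm{red}}\cong \AA^{2}$.

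The main obstacle is the third hypothesis $\pr_{\mathcal{Y}}:\mathcal{Z}\times \mathcal{Y}\to \mathcal{Y}$, because $\mathcal{Z}_{\mathrm{red}}$ is the singular cuspidal curve $\Gamma_{\alpha_{1},\alpha_{2}}\subset\{x=0\}$. My plan here is to exploit that the normalization $\AA^{1}_{w}\to \Gamma_{\alpha_{1},\alpha_{2}}$, $w\mapsto(w^{\alpha_{2}},w^{\alpha_{1}})$, is finite birational onto a unibranch curve and is therefore a universal homeomorphism in characteristic zero. The base change $\AA^{1}\times \mathcal{Y}\times\Delta^{n}\to \Gamma_{\alpha_{1},\alpha_{2}}\times \mathcal{Y}\times\Delta^{n}$ is then still a universal homeomorphism, which yields a dimension-preserving bijection between integral closed subvarieties that respects intersections with the face subschemes of $\Delta^{n}$. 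Identifying the Bloch cycle complexes in this way produces $\CH_{\ast}(\Gamma_{\alpha_{1},\alpha_{2}}\times \mathcal{Y},\cdot)\cong\CH_{\ast}(\AA^{1}\times \mathcal{Y},\cdot)\cong \CH_{\ast}(\mathcal{Y},\cdot)$, the last step again by homotopy invariance. Combined with the nil-invariance identification $\CH_{\ast}(\mathcal{Z}\times \mathcal{Y},\cdot)=\CH_{\ast}(\mathcal{Z}_{\mathrm{red}}\times \mathcal{Y},\cdot)$, all three hypotheses of Theorem~\ref{theorem:stableA1affinemodification}(i) hold, and that theorem delivers the stable $\AA^{1}$-contractibility of $\mathcal{X}(n,\alpha_{i},p)$.
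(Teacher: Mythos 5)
Your proposal is correct and follows the same overall strategy as the paper's proof: realize $\mathcal{X}(n,\alpha_{i},p)$ as the affine modification of $\AA^{3}$ along $\ddiv(x^{n})$ with center $(x^{n},y^{\alpha_{1}}+t^{\alpha_{2}}+xp)$ as in Table~\ref{table:1}, check the three higher Chow hypotheses, and invoke \aref{theorem:stableA1affinemodification}. The hypotheses on $\mathcal{D}\times\mathcal{Y}\to\mathcal{Y}$ and on $(f,\pr_{\mathcal{Y}})$ are handled in both arguments by nil-invariance and homotopy invariance, exactly as you do. The one place where you diverge is the crucial input $\CH_{\ast}(\Gamma_{\alpha_{1},\alpha_{2}}\times\mathcal{Y},\cdot)\cong\CH_{\ast}(\mathcal{Y},\cdot)$: the paper simply cites \cite[Proposition 3.2]{MR3549169}, whereas you supply a self-contained argument via the normalization $\AA^{1}_{w}\to\Gamma_{\alpha_{1},\alpha_{2}}$ being a finite birational bijection, hence a universal homeomorphism. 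That argument is sound, but note that identifying the two cycle complexes requires more than the dimension-preserving bijection on integral subvarieties respecting proper intersection with faces: you must also match the boundary maps, whose coefficients are intersection multiplicities along the faces of $\Delta^{n}$. The clean way to close this is to observe that proper pushforward along the normalization is a map of Bloch complexes (push--pull compatibility with the face Gysin maps), and that in characteristic zero all residue field extensions for a universal homeomorphism are trivial, so this chain map is a degreewise bijection of the free abelian groups of cycles; the isomorphism on higher Chow groups then follows. With that one sentence added, your verification is complete and arguably more self-contained than the paper's, at the cost of reproving a cited lemma.
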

\begin{proof}
Recall that $\mathcal{X}(n,\alpha_i,p)$ in \eqref{equation:firstkindpolynomial} is the affine modification of $\AA^{3}$ along $\mathcal{D}:=\operatorname{div}(x^{n})$ with center $\mathcal{Z}$ 
given by the ideal $(x^{n},y^{\alpha_1}+t^{\alpha_2}+xp(x,y,t))$, 
cf. \aref{table:1}.
We identify the reduced scheme $\mathcal{Z}_{\operatorname{red}}$ with the cuspidal curve $\Gamma_{\alpha_1,\alpha_2}=\{y^{\alpha_1}+t^{\alpha_2}=0\}\subset \AA^{2}$, 
so that $\CH_*(\mathcal{Z},i)\cong\CH_*(\Gamma_{\alpha_1,\alpha_2},i)$ (see \cite{MR1269719}).
If $\mathcal{Y}$ is a smooth affine scheme,
then $\mathcal{Z}\times \mathcal{Y}\to \mathcal{Y}$ induces an isomorphism in higher Chow groups \cite[Proposition 3.2]{MR3549169}.
The same conclusion holds for the maps $\mathcal{D}\times \mathcal{Y}\to \mathcal{Y}$ and 
$(\pr_{x},\pr_\mathcal{Y}):\mathcal{X}\times \mathcal{Y}=\AA^{3}\times \mathcal{Y}\to \AA^{1}\times \mathcal{Y}$ by homotopy invariance.
Applying \aref{theorem:stableA1affinemodification} finishes the proof.
\end{proof}

\begin{corollary}
\label{corollary:iterated stable} 
The iterated Koras-Russell $3$-folds in \eqref{equation:iteratedfirstkindpolynomial} are stably $\AA^{1}$-contractible.
\end{corollary}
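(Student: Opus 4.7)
The plan is to apply \aref{theorem:stableA1affinemodification}(i) with ambient smooth scheme $\mathcal{X} := \mathcal{X}(n_{1},\alpha_{i},p)$, divisor $\mathcal{D} := n_{2}\{x=0\}$, and center $\mathcal{Z} := V(x^{n_{2}},\, y-z^{m}) \subset \mathcal{X}$, exactly as recorded in \aref{table:1}. Smoothness of the resulting affine modification $\mathcal{Y}(m,n_{i},\alpha_{i},p)$ follows from the description of $\pr_{x}$ given in \aref{sec:intro} (trivial $\AA^{2}$-bundle away from $0$, singular fiber $\AA^{1}\times \Gamma_{m\alpha_{1},\alpha_{2}}$). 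First I would verify that $\mathcal{Z}$ is a local complete intersection in $\mathcal{D}$: modulo $x^{n_{2}}$, the defining ideal of $\mathcal{Z}$ is generated by the single element $y-z^{m}$.

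Next I would identify the reduced structures. Restricting the equation of $\mathcal{X}(n_{1},\alpha_{i},p)$ to $\{x=0\}$ yields $y^{\alpha_{1}}+t^{\alpha_{2}}=0$, so $\mathcal{D}_{\mathrm{red}}\cong \Gamma_{\alpha_{1},\alpha_{2}}\times\AA^{1}_{z}$. Imposing further $y=z^{m}$ yields $\{z^{m\alpha_{1}}+t^{\alpha_{2}}=0\}\subset\AA^{2}_{z,t}$, which is the cuspidal curve $\Gamma_{m\alpha_{1},\alpha_{2}}$ (well-defined as $m\alpha_{1}$ and $\alpha_{2}$ are coprime by hypothesis).

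For any smooth affine $\mathcal{Y}$, I would then check the three higher Chow group hypotheses of \aref{theorem:stableA1affinemodification}(i). Since higher Chow groups are insensitive to nilpotents (\cite{MR1269719}), we have $\CH_{*}(\mathcal{Z}\times\mathcal{Y},i)\cong \CH_{*}(\Gamma_{m\alpha_{1},\alpha_{2}}\times\mathcal{Y},i)$ and $\CH_{*}(\mathcal{D}\times\mathcal{Y},i)\cong \CH_{*}(\Gamma_{\alpha_{1},\alpha_{2}}\times\AA^{1}_{z}\times\mathcal{Y},i)$. By \cite[Proposition 3.2]{MR3549169} cuspidal curves give isomorphisms on higher Chow groups upon product with any smooth affine scheme, and combining this with homotopy invariance along $\AA^{1}_{z}$ handles $\mathcal{Z}$ and $\mathcal{D}$ simultaneously.

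For the third hypothesis I would use \aref{corollary:deformed KR stable}: its proof shows that $\pr_{\mathcal{Y}'}:\mathcal{X}(n_{1},\alpha_{i},p)\times\mathcal{Y}'\to \mathcal{Y}'$ induces an isomorphism on higher Chow groups for \emph{every} smooth affine $\mathcal{Y}'$. Taking $\mathcal{Y}':=\AA^{1}\times\mathcal{Y}$, together with homotopy invariance $\CH_{*}(\AA^{1}\times\mathcal{Y},i)\cong \CH_{*}(\mathcal{Y},i)$, a two-out-of-three argument applied to the factorization $\mathcal{X}\times\mathcal{Y}\xrightarrow{(x,\pr_{\mathcal{Y}})}\AA^{1}\times\mathcal{Y}\xrightarrow{\pr_{\mathcal{Y}}}\mathcal{Y}$ yields that $(x,\pr_{\mathcal{Y}})$ also induces an isomorphism on higher Chow groups. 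The conclusion is then immediate from \aref{theorem:stableA1affinemodification}(i). The only delicate point is this last step, where condition (3) is not a homotopy invariance statement about an affine space but must be imported from the already-established stable $\AA^{1}$-contractibility of the deformed Koras-Russell threefolds; everything else reduces to standard properties of cuspidal curves and $\AA^{1}$-homotopy invariance.
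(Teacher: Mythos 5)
Your proposal is correct and follows essentially the same route as the paper: realize $\mathcal{Y}(m,n_{i},\alpha_{i},p)$ as the affine modification of $\mathcal{X}(n_{1},\alpha_{i},p)$ along $\ddiv(x^{n_{2}})$ with center $V(x^{n_{2}},y-z^{m})$, identify $\mathcal{Z}_{\mathrm{red}}\cong\Gamma_{m\alpha_{1},\alpha_{2}}$ and $\mathcal{D}_{\mathrm{red}}$ with the cylinder on $\Gamma_{\alpha_{1},\alpha_{2}}$, and feed the higher Chow group isomorphisms from \cite[Proposition 3.2]{MR3549169} and \aref{corollary:deformed KR stable} into \aref{theorem:stableA1affinemodification}. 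Your extra care with the local complete intersection condition and the two-out-of-three argument for the map to $\AA^{1}\times\mathcal{Y}$ only makes explicit what the paper leaves implicit.
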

\begin{proof}
Recall that $\mathcal{Y}(m,n_i,\alpha_i,p)$ in \eqref{equation:iteratedfirstkindpolynomial} is the affine modification 
$\widetilde{\mathcal{X}(n,\alpha_i,p)}(\mathcal{D}=\operatorname{div}(x^{n_2}), \mathcal{Z}=V(\mathcal{I}))$ where $\mathcal{I}=(x^{n_2}, y-z^{m})$, 
cf. \aref{table:1}.
Here $\mathcal{Z}_{\operatorname{red}}$ is isomorphic $\Gamma_{m\alpha_1,\alpha_2}$ and $\mathcal{D}_{\operatorname{red}}$ is isomorphic to the cylinder on $\Gamma_{\alpha_1,\alpha_2}$. 
For any smooth affine scheme $\mathcal{Y}$, 
\cite[Proposition 3.2]{MR3549169} shows the maps $\mathcal{D}\times \mathcal{Y}\to \mathcal{Y}$ and $\mathcal{Z}\times \mathcal{Y}\to \mathcal{Y}$ 
induce isomorphisms on higher Chow groups.
The same holds for $(\pr_{x},\pr_\mathcal{Y}):\mathcal{X}(n,\alpha_i,p)\times \mathcal{Y}\to \AA^{1}\times \mathcal{Y}$ by \aref{corollary:deformed KR stable}. 
To conclude we apply \aref{theorem:stableA1affinemodification}.
\end{proof}

We show stable $\AA^{1}$-contractibility of $4$-folds introduced by Kaliman-Zaidenberg in \cite{MR1817379}.
These can be distinguished from $\AA^{4}$ by the Makar-Limanov invariant.
\begin{corollary}
For $m\ge 2$ and coprime integers $k>l\geq 3$, 
the $4$-fold defined by
\begin{equation}
\label{equation:KalimanZaidenbergExample}
\{u^mv=\frac{(xz+1)^k-(yz+1)^l-z}{z}\}
\subset
\AA^{5}
\end{equation}
is stably $\AA^{1}$-contractible.
\end{corollary}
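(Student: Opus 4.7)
The plan is to realize the $4$-fold $\mathcal{W}$ of \eqref{equation:KalimanZaidenbergExample} as an affine modification of $\AA^{4}$ and then invoke \aref{theorem:stableA1affinemodification}(i), reducing the problem to higher Chow group computations that have already appeared for the tom Dieck-Petrie surface $\mathcal{V}_{k,l}$ and the cuspidal curve $\Gamma_{k,l}$. Write $P(x,y,z):=\tfrac{(xz+1)^{k}-(yz+1)^{l}-z}{z}\in F[x,y,z]$, so that $\mathcal{V}_{k,l}=V(P)\subset\AA^{3}$. Then $\mathcal{W}$ is isomorphic to the affine modification of $\mathcal{X}=\AA^{4}=\Spec F[u,x,y,z]$ along $\mathcal{D}=\ddiv(u^{m})$ with center $\mathcal{Z}=V(u^{m},P)$, since adjoining $v=P/u^{m}$ to $F[u,x,y,z]$ produces the defining equation $u^{m}v=P$. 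The $4$-fold is smooth by the Jacobian criterion applied to $u^{m}v-P$, using $m\geq 2$ and the smoothness of $\mathcal{V}_{k,l}$. Moreover, since $P$ is independent of $u$ and is a non-zero-divisor on $\mathcal{D}$, the center $\mathcal{Z}$ is cut out of $\mathcal{D}$ by the single equation $P=0$, and hence is a local complete intersection in $\mathcal{D}$ as required by \aref{theorem:stableA1affinemodification}.

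To apply part (i) of the theorem, for every smooth affine $\mathcal{Y}$ one must verify that the three maps $\mathcal{Z}\times\mathcal{Y}\to\mathcal{Y}$, $\mathcal{D}\times\mathcal{Y}\to\mathcal{Y}$ and $(\pr_{u},\pr_{\mathcal{Y}})\colon\AA^{4}\times\mathcal{Y}\to\AA^{1}\times\mathcal{Y}$ induce isomorphisms on higher Chow groups. Two of these are formal: for $\mathcal{D}\times\mathcal{Y}\to\mathcal{Y}$ the reduced structure $\mathcal{D}_{\mathrm{red}}\cong\AA^{3}$ together with insensitivity of higher Chow groups to nilpotent thickenings \cite{MR1269719} reduces the claim to homotopy invariance, and the third map is a trivial $\AA^{3}$-bundle. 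The only input of substance is the assertion for $\mathcal{Z}\times\mathcal{Y}\to\mathcal{Y}$, which via the same nilpotent-invariance reduces to showing that $\mathcal{V}_{k,l}\times\mathcal{Y}\to\mathcal{Y}$ is an isomorphism on higher Chow groups for every smooth affine $\mathcal{Y}$.

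I would establish the latter by a second, nested application of \aref{theorem:stableA1affinemodification}(i), using that by \aref{example:tomDieck-Petriesurfaces} the surface $\mathcal{V}_{k,l}$ is itself the affine modification of $\AA^{2}$ along the cuspidal curve $\Gamma_{k,l}=\{x^{k}-y^{l}=0\}$ with center the reduced point $(1,1)$. The three hypotheses for this inner application are supplied respectively by: the tautological identity $\{(1,1)\}\times\mathcal{Y}=\mathcal{Y}$; the isomorphism $\CH_{*}(\Gamma_{k,l}\times\mathcal{Y},\ast)\cong\CH_{*}(\mathcal{Y},\ast)$ from \cite[Proposition 3.2]{MR3549169} applied to the cuspidal curve; and homotopy invariance for $\AA^{2}\times\mathcal{Y}\to\AA^{1}\times\mathcal{Y}$. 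The main (if essentially bookkeeping) obstacle is to keep in mind that each invocation of \aref{theorem:stableA1affinemodification}(i) must be read for its relative higher Chow conclusion rather than only for the concluding absolute statement of stable $\AA^{1}$-contractibility, because this relative statement is precisely what feeds the outer application. Once the chain is assembled, the outer application of \aref{theorem:stableA1affinemodification}(i) yields that $\mathcal{W}$ is stably $\AA^{1}$-contractible.
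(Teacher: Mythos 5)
Your proposal is correct and is essentially the paper's own argument, just written out in full: the paper likewise realizes the $4$-fold as the affine modification of $\AA^{4}$ along $\ddiv(u^{m})$ with center $(u^{m},\tfrac{(xz+1)^{k}-(yz+1)^{l}-z}{z})$, identifies the (reduced) center with a tom Dieck--Petrie surface, and applies \aref{theorem:stableA1affinemodification} to both this modification and the presentation of $\mathcal{V}_{k,l}$ as a modification of $\AA^{2}$ along $\Gamma_{k,l}$, with \cite[Proposition 3.2]{MR3549169} supplying the higher Chow input for the cuspidal curve. Your explicit bookkeeping (nilpotent invariance of higher Chow groups, the l.c.i.\ condition, smoothness via the Jacobian criterion, and the need to extract the relative higher Chow conclusion of part (i) rather than only the stable contractibility statement) is exactly what the paper leaves implicit.
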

\begin{proof}
The $4$-fold in \eqref{equation:KalimanZaidenbergExample} is the affine modification of $\AA^{4}_{u,x,y,z}$ along $\operatorname{div}(u^{2})$ with center $\mathcal{Z}$ given by: 
$$
(u^m,\frac{(xz+1)^k-(yz+1)^l-z}{z}).
$$ 
Here $\mathcal{Z}$ is isomorphic to a tom Dieck-Petrie surface, 
i.e., 
an affine modification of $\AA^{2}$ (see \aref{example:tomDieck-Petriesurfaces}). 
Applying \aref{theorem:stableA1affinemodification} to these affine modifications yields the claim.
\end{proof}
\begin{remark}
Setting $m=2$, $k=4$, and $l=2$ the hypersurface in \eqref{equation:KalimanZaidenbergExample} is given by: 
\begin{equation}
\label{equation:KalimanZaidenbergExample2}
\{u^2v=x^4z^3+4x^3z^2+6x^2z+4x-y^3z^2-3y^2z-3y-1\}.
\end{equation}
Its affine modification along $\operatorname{div}(u^2)$ with center given by $I=(u^2,x-1,y-1,z)$ is isomorphic to $\AA^4$. 
If the tom Dieck-Petrie surfaces are $\AA^1$-contractible, 
then \eqref{equation:KalimanZaidenbergExample2} is $\AA^1$-contractible by \aref{theorem:A1affinemodifications}.
\end{remark}

We give two more examples of stably $\AA^{1}$-contractible smooth affine $4$-folds starting with \aref{thm:4dimensional-A1Cont}.
\begin{corollary}
\label{corollary:c=mab+1}
If $\alpha_{3}=m\alpha_{1}\alpha_{2}+1$, 
$m>0$, 
then $\mathcal{Y}(n,\alpha_i,p)$ in \eqref{equation:4dimensional} is stably $\AA^{1}$-contractible.
\end{corollary}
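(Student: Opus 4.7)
The plan is to extend the templates of \aref{corollary:deformed KR stable} and \aref{corollary:iterated stable} to the Brieskorn-Pham setting. By \aref{table:1}, $\mathcal{Y}(n,\alpha_{i},p)$ is the affine modification of $\AA^{4}=\Spec(F[x,y_{1},y_{2},y_{3}])$ along $\mathcal{D}=n\operatorname{div}(x)$ with centre $\mathcal{Z}=V(\mathcal{I})$ for $\mathcal{I}=(x^{n},\,y_{1}^{\alpha_{1}}+y_{2}^{\alpha_{2}}+y_{3}^{\alpha_{3}}+xp)$, and it is smooth by the Jacobian criterion using $p(0,0,0,0)\in F^{\ast}$. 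By \aref{theorem:stableA1affinemodification}(i), stable $\AA^{1}$-contractibility of $\mathcal{Y}(n,\alpha_{i},p)$ reduces to checking, for every smooth affine $\mathcal{Y}$, that each of the three projection maps attached to $\mathcal{D}$, $\mathcal{Z}$, and $(\pr_{x},\pr_{\mathcal{Y}}):\AA^{4}\times\mathcal{Y}\to\AA^{1}\times\mathcal{Y}$ is a higher Chow isomorphism.

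Two of these inputs are formal: since $\mathcal{D}_{\mathrm{red}}\cong\AA^{3}$, nilpotent-insensitivity of higher Chow groups \cite{MR1269719} combined with homotopy invariance handles the first, while $(\pr_{x},\pr_{\mathcal{Y}})$ is a trivial $\AA^{3}$-bundle. The main obstacle is the condition on $\mathcal{Z}$. Using nilpotent-insensitivity again and the identification $\mathcal{Z}_{\mathrm{red}}\cong\mathcal{S}_{\alpha_{i}}$, this amounts to the Brieskorn-Pham analogue of \cite[Proposition 3.2]{MR3549169}: proving $\mathcal{S}_{\alpha_{i}}\times\mathcal{Y}\to\mathcal{Y}$ is a higher Chow isomorphism for every smooth affine $\mathcal{Y}$. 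The hypothesis $\alpha_{3}=m\alpha_{1}\alpha_{2}+1$ enters precisely here, through the explicit birational morphism
\[
\phi\colon\AA^{2}_{\tilde y_{1},\tilde y_{2}}\to\mathcal{S}_{\alpha_{i}},\qquad
(\tilde y_{1},\tilde y_{2})\mapsto\bigl(U^{m\alpha_{2}}\tilde y_{1},\,U^{m\alpha_{1}}\tilde y_{2},\,U\bigr),\qquad
U:=-(\tilde y_{1}^{\alpha_{1}}+\tilde y_{2}^{\alpha_{2}}),
\]
whose image lies in $\mathcal{S}_{\alpha_{i}}$ exactly because $\alpha_{3}=m\alpha_{1}\alpha_{2}+1$: one computes $y_{1}^{\alpha_{1}}+y_{2}^{\alpha_{2}}+y_{3}^{\alpha_{3}}=U^{m\alpha_{1}\alpha_{2}}(\tilde y_{1}^{\alpha_{1}}+\tilde y_{2}^{\alpha_{2}}+U)=0$. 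This $\phi$ restricts to an isomorphism $\AA^{2}\setminus\Gamma_{\alpha_{1},\alpha_{2}}\overset{\sim}{\to}\mathcal{S}_{\alpha_{i}}\setminus\Gamma_{\alpha_{1},\alpha_{2}}^{\prime}$, where $\Gamma_{\alpha_{1},\alpha_{2}}^{\prime}:=\{y_{3}=0\}\cap\mathcal{S}_{\alpha_{i}}\cong\Gamma_{\alpha_{1},\alpha_{2}}$, and contracts $\Gamma_{\alpha_{1},\alpha_{2}}\subset\AA^{2}$ onto the singular origin of $\mathcal{S}_{\alpha_{i}}$.

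I would then compare, after crossing with $\mathcal{Y}$, the localization long exact sequence for $\Gamma_{\alpha_{1},\alpha_{2}}\hookrightarrow\AA^{2}$ with a stratified localization analysis along the filtration $\{0\}\subset\Gamma_{\alpha_{1},\alpha_{2}}^{\prime}\subset\mathcal{S}_{\alpha_{i}}$, using $\phi$ to identify the top open stratum with $\AA^{2}\setminus\Gamma_{\alpha_{1},\alpha_{2}}$ and exploiting that $\Gamma_{\alpha_{1},\alpha_{2}}^{\prime}\setminus\{0\}\cong\GG_{m}$ has explicitly computable higher Chow groups. Two applications of \cite[Proposition 3.2]{MR3549169}, one for $\Gamma_{\alpha_{1},\alpha_{2}}$ and one for the isomorphic curve $\Gamma_{\alpha_{1},\alpha_{2}}^{\prime}$, together with homotopy invariance for $\AA^{2}\times\mathcal{Y}\to\mathcal{Y}$, then force $\mathcal{S}_{\alpha_{i}}\times\mathcal{Y}\to\mathcal{Y}$ to be a higher Chow isomorphism, after which \aref{theorem:stableA1affinemodification}(i) closes the argument. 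The hard part I expect is running the localization sequence cleanly over the singular scheme $\mathcal{S}_{\alpha_{i}}$: because $\phi$ is a non-proper birational morphism that contracts $\Gamma_{\alpha_{1},\alpha_{2}}$ to the singular origin, the natural cdh square is not an abstract blow-up square in the standard sense, so the comparison has to be assembled by hand stratum by stratum.
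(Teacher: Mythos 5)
Your proposal is correct and follows essentially the same route as the paper: reduce via \aref{theorem:stableA1affinemodification} to showing $\mathcal{S}_{\alpha_{i}}\times\mathcal{Y}\to\mathcal{Y}$ is a higher Chow isomorphism, and prove this using exactly your map $\phi$ (which is the paper's explicit isomorphism between $\mathcal{S}_{\alpha_{i}}\setminus\{y_{3}=0\}$ and $\AA^{2}\setminus\Gamma_{\alpha_{1},\alpha_{2}}$, valid precisely because $\alpha_{3}=m\alpha_{1}\alpha_{2}+1$) together with \cite[Proposition 3.2]{MR3549169} and the five lemma. The only difference is that the paper's execution is simpler than your final paragraph anticipates: it compares the single localization sequence for $\Gamma':=\{y_{3}=0\}\cap\mathcal{S}_{\alpha_{i}}\hookrightarrow\mathcal{S}_{\alpha_{i}}$ with that for $\{0\}\hookrightarrow\AA^{1}$ via $\pr_{y_{3}}$ (Bloch's localization sequence holds for the singular scheme $\mathcal{S}_{\alpha_{i}}$ with no properness or cdh input), so no stratum-by-stratum assembly along $\{0\}\subset\Gamma'\subset\mathcal{S}_{\alpha_{i}}$ is needed.
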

\begin{proof}
Recall $\mathcal{Y}:=\mathcal{Y}(n,\alpha_i,p)$ is the affine modification of $\AA^{4}$ along $\ddiv(x^{n})$ and with reduced center isomorphic to the surface 
$\mathcal{S}=\{y_1^{\alpha_1}+y_2^{\alpha_2}+y_3^{\alpha_3}=0\}\subset \AA^{3}$, 
cf. \aref{table:1}.
By \aref{theorem:stableA1affinemodification} it suffices to show $\mathcal{S}\times \mathcal{Y}\to \mathcal{Y}$ induces an isomorphism on higher Chow groups for 
any smooth affine scheme $\mathcal{Y}$. 
The map $(\pr_{y_{3}},\id_\mathcal{Y}):\mathcal{S}\times \mathcal{Y}\to \AA^{1}\times \mathcal{Y}$ fits into the commutative diagram of exact localization sequences: 
\begin{equation}
\label{equation:surface c=ab+1}
\xymatrix{
\CH_*(\GG_m \mathcal{Y},i+1)  \ar[r]\ar[d]_{(1)} & 
\CH_*(\mathcal{Y},i) \ar[r]\ar[d]_{(2)} &
\CH_*(\AA^{1} \mathcal{Y},i)  \ar[r]\ar[d]_{(3)} &  
\CH_*(\GG_m \mathcal{Y},i)  \ar[r]\ar[d]_{(4)} & 
\CH_*(\mathcal{Y},i-1)\ar[d]_{(5)} \\ 
\CH_*(\mathcal{U},i+1)  \ar[r] & 
\CH_*(\Gamma\mathcal{Y},i) \ar[r] &
\CH_*(\mathcal{S} \mathcal{Y},i)  \ar[r]&  
\CH_*(\mathcal{U},i)  \ar[r] & 
\CH_*(\Gamma\mathcal{Y},i-1).  \\ 
}
\end{equation}
Here $\Gamma:=\Gamma_{\alpha_{1},\alpha_{2}}$ and $\mathcal{U}:=(\mathcal{S}\times \mathcal{Y})\setminus \{y_3=0\}$. 
By \cite[Proposition 3.2]{MR3549169} the maps $\text{(2)}$ and $\text{(5)}$ in \eqref{equation:surface c=ab+1} are isomorphisms. 
We note there is an isomorphism $\mathcal{U}\cong \AA^{2}\setminus \Gamma_{\alpha_1,\alpha_2}=\Spec(F[u,v])\setminus \Gamma_{\alpha_1,\alpha_2}$ via the map 
$$
(x,y,z)
\mapsto 
(u(-u^{\alpha_1}-v^{\alpha_2})^{m\alpha_2},v(-u^{\alpha_1}-v^{\alpha_2})^{m\alpha_1},-u^{\alpha_1}-v^{\alpha_2}),
$$ 
with inverse: 
$$
(u,v)
\mapsto 
(\frac{x}{z^{m\alpha_2}},\frac{y}{z^{m\alpha_1}}).
$$ 
The five lemma and \cite[Proposition 3.2]{MR3549169} show $\text{(1)}$ and $\text{(4)}$ in \eqref{equation:surface c=ab+1} are isomorphisms, 
cf. the proof of \aref{theorem:stableA1affinemodification}.
Another application of the five lemma yields that $\text{(3)}$ in \eqref{equation:surface c=ab+1} is also an isomorphism.
\end{proof}

\begin{corollary}
The smooth affine $4$-fold $\mathcal{Y}(n,(2,3,5),p)$ in \eqref{equation:4dimensional} is stably $\AA^{1}$-contractible.
\end{corollary}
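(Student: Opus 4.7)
The approach is to mirror the proof of \aref{corollary:c=mab+1}. Following \aref{table:1}, I would view $\mathcal{Y}(n,(2,3,5),p)$ as the affine modification of $\AA^{4}$ along $\ddiv(x^{n})$ with reduced center the Brieskorn-Pham surface $\mathcal{S}:=\mathcal{S}_{2,3,5}\subset\AA^{3}$. By \aref{theorem:stableA1affinemodification}(i) combined with $\AA^{1}$-homotopy invariance (which handles the hypotheses for the smooth ambient $\AA^{4}$ and for $\mathcal{D}\cong\AA^{3}$), the task reduces to showing that for every smooth affine scheme $\mathcal{Y}$ the projection $\mathcal{S}\times\mathcal{Y}\to\mathcal{Y}$ induces an isomorphism on higher Chow groups.

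The key geometric input I would use is the algebraic isomorphism
\[
\phi:\AA^{2}\setminus\Gamma_{2,3}\xrightarrow{\sim}\mathcal{U}:=\mathcal{S}\setminus\{y_{3}=0\},\quad(u,v)\mapsto\left(\frac{-u}{(u^{2}+v^{3})^{3}},\,\frac{v}{(u^{2}+v^{3})^{2}},\,\frac{-1}{u^{2}+v^{3}}\right),
\]
with inverse $(y_{1},y_{2},y_{3})\mapsto(y_{1}/y_{3}^{3},y_{2}/y_{3}^{2})$ (one checks directly that $y_{1}^{2}+y_{2}^{3}+y_{3}^{5}=0$ under either formula). This map exists precisely because $(2,3,5)$ satisfies the dual arithmetic relation $\alpha_{1}\alpha_{2}-\alpha_{3}=2\cdot3-5=1$, whereas \aref{corollary:c=mab+1} assumes the relation $\alpha_{3}-m\alpha_{1}\alpha_{2}=1$. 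In both situations the fiber of $\pr_{y_{3}}:\mathcal{S}\to\AA^{1}_{y_{3}}$ over the origin is the cuspidal curve $\Gamma:=\Gamma_{2,3}=\{y_{1}^{2}+y_{2}^{3}=0\}$, to which \cite[Proposition 3.2]{MR3549169} applies.

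Given $\phi$, the rest of the argument goes through exactly as in \aref{corollary:c=mab+1}. I would compare the Bloch localization sequence for $\Gamma\mathcal{Y}\hookrightarrow\mathcal{S}\mathcal{Y}\hookleftarrow\mathcal{U}\mathcal{Y}$ with that for $\mathcal{Y}\hookrightarrow\AA^{1}\mathcal{Y}\hookleftarrow\GG_m\mathcal{Y}$, connected by pullback along $(\pr_{y_{3}},\id_{\mathcal{Y}})$: the $\Gamma$-terms match by \cite[Proposition 3.2]{MR3549169}, and the $\mathcal{U}$-terms match by a secondary five-lemma argument against the localization sequence for $\Gamma\hookrightarrow\AA^{2}\hookleftarrow\AA^{2}\setminus\Gamma\cong\mathcal{U}$, using the map of pairs $(\AA^{2},\Gamma)\to(\AA^{1},\{0\})$, $(u,v)\mapsto u^{2}+v^{3}$, together with $\AA^{1}$-homotopy invariance. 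The five lemma then produces an isomorphism $\CH_{*}(\AA^{1}\mathcal{Y},i)\xrightarrow{\sim}\CH_{*}(\mathcal{S}\mathcal{Y},i)$, and composing with homotopy invariance yields $\CH_{*}(\mathcal{Y},i)\xrightarrow{\sim}\CH_{*}(\mathcal{S}\mathcal{Y},i)$. One bookkeeping point: the two natural maps $\mathcal{U}\to\GG_m$ appearing in the two five-lemma arguments (namely $y_{3}$ on $\mathcal{U}$, versus $(u,v)\mapsto u^{2}+v^{3}$ on $\AA^{2}\setminus\Gamma$ under $\phi$) differ by the involution $t\mapsto-1/t$ of $\GG_m$, which is harmless on higher Chow groups. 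The principal step is identifying and verifying $\phi$; once this is in place, no new ideas beyond \aref{corollary:c=mab+1} are required.
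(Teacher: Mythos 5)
Your proposal is correct, but for the key step it takes a genuinely different route from the paper. The paper also reduces, via \aref{theorem:stableA1affinemodification}, to showing that $\mathcal{S}_{2,3,5}\times\mathcal{Y}\to\mathcal{Y}$ induces an isomorphism on higher Chow groups; but instead of re-running the localization-sequence argument of \aref{corollary:c=mab+1}, it performs a chain of three further affine modifications $\mathcal{S}_4\to\mathcal{S}_3\to\mathcal{S}_2\to\mathcal{S}_{2,3,5}$ (e.g.\ $\mathcal{S}_2=\{y_1^2+y_2^3y_3+y_3^3=0\}$, ending with $\mathcal{S}_4=\{y_1^2+y_2^2y_3+y_2=0\}$, which is itself an affine modification of $\AA^2$), and applies \aref{theorem:stableA1affinemodification} at each stage. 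You instead exploit the arithmetic identity $2\cdot 3-5=1$ — dual to the hypothesis $\alpha_3=m\alpha_1\alpha_2+1$ of \aref{corollary:c=mab+1} — to write down the explicit isomorphism $\AA^2\setminus\Gamma_{2,3}\cong\mathcal{S}_{2,3,5}\setminus\{y_3=0\}$; I checked your formulas: $u^2+v^3=-1/y_3$ under the inverse map, the image does satisfy $y_1^2+y_2^3+y_3^5=0$, and the two compositions are the identity, so $\phi$ is an isomorphism of schemes. With that in hand the two five-lemma comparisons go through exactly as in \aref{corollary:c=mab+1} (and your remark that the two identifications of $\mathcal{U}\to\GG_m$ differ by the automorphism $t\mapsto -1/t$ of $\GG_m$, which acts invertibly on higher Chow groups, correctly disposes of the only compatibility issue). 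The trade-off: your argument is shorter and stays entirely within the template of the preceding corollary, but hinges on finding the rational parametrization, which is special to exponent triples satisfying $m\alpha_1\alpha_2-\alpha_3=\pm1$; the paper's chain of modifications is more mechanical and illustrates the "affine modification calculus" that \aref{theorem:stableA1affinemodification} is designed to iterate, at the cost of some unilluminating equation-juggling. Both proofs are valid.
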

\begin{proof}
As in \aref{corollary:c=mab+1} it suffices to check that $\mathcal{S}_{1}\times \mathcal{Y}\to \mathcal{Y}$ induces an isomorphism on higher Chow groups, 
where $\mathcal{Y}$ is an arbitrary smooth affine scheme and $\mathcal{S}_{1}:=\{y_1^{2}+y_2^{3}+y_3^{5}=0\}\subset\AA^{3}$.
In effect,
we form the affine modifications:
\begin{equation}
\mathcal{S}_4\xrightarrow{\pi_3}\mathcal{S}_3\xrightarrow{\pi_2}\mathcal{S}_2\xrightarrow{\pi_1}\mathcal{S}_1.
\end{equation}
Here $\mathcal{S}_2:=\{y_1^{2}+y_2^{3}y_3+y_3^{3}\}\subset \AA^{3}$ is the affine modification of $\mathcal{S}_1$ along $\operatorname{div}(y_3)$ with center given by $(y_3,y_1,y_2)$, 
$\mathcal{S}_3:=\{y_1^{2}+y_2^2y_3+y_3^3y_2=0\}\subset \AA^{3}$ is the affine modification of $\mathcal{S}_2$ along $\operatorname{div}(y_2)$ with center given by $(y_2,y_1,y_3)$, 
and $\mathcal{S}_4:=\{y_1^{2}+y_2^{2}y_3+y_2=0\}\subset \AA^{3}$ is the affine modification of $\mathcal{S}_3$ along $\operatorname{div}(y_3^{3})$ with center given by $(y_3^{3},y_1,y_2)$.
By \aref{theorem:stableA1affinemodification}, 
$\mathcal{S}_1\times \mathcal{Y}\to \mathcal{Y}$ induces an isomorphism on higher Chow groups if and only if the same holds for $\mathcal{S}_4\times \mathcal{Y}\to \mathcal{Y}$.
Now $\mathcal{S}_4$ is the affine modification of $\AA^{2}$ along $\operatorname{div}(y_2^{2})$ with center given by $(y_2^{2},-y_1^{2}-y_2)$, 
and we are done by \aref{theorem:stableA1affinemodification}.
\end{proof}

\section{Unstable $\AA^{1}$-contractibility}
\label{section:unstable}
\subsection{Koras-Russell fiber bundles}
For the convenience of the reader we recall the notion of a Koras-Russell fiber bundle, see \aref{def:KRBundle} in \aref{sec:intro}.

For a polynomial $s(x)\in F[x]$ of positive degree and $R(x,y,t)\in F[x,y,t]$, 
we define the closed subscheme $\mathcal{X}(s,R)$ of $\AA^{1}\times\AA^{3}=\Spec(F[x][y,z,t])$ by the equation $\{s(x)z=R(x,y,t)\}$.
The projection map  $\rho=\pr_{x}:\mathcal{X}(s,R) \to \AA^{1}_{x}$ in \eqref{equation:KRfiberbundleprojection} is called a Koras-Russell fiber bundle if
\begin{itemize}
\item[(a)] $\mathcal{X}(s,R)$ is a smooth scheme, and
\item[(b)] For every zero $x_{0}\in\AA^{1}(F)$ of $s(x)$, 
the zero locus in $\AA^2=\Spec(F[y,t])$ of the polynomial $R(x_0,y,t)$ is an integral rational plane curve $\mathcal{C}$ with a unique place at infinity and at most unibranch singularities. 
Equivalently, 
there exists a polynomial $a\in F[y,t]$ and $b(x,y,t)\in F[x,y,t]$ such that 
\[
R(x,y,t)
=
a(y,t)
+
(x-x_{0})b(x,y,t),
\]
such that $\mathcal{C}=\{a(y,t)=0\}$.
\end{itemize}
By definition, $\mathcal{X}(s,R)$ can be realized as the affine modification of $\AA^{1}\times\AA^{2}=\mathrm{Spec}(F[x][y,t])$ along the principal divisor $\ddiv(s)$ 
with center at the closed subscheme defined by the ideal $(s(x),R(x,y,t))$.
The map $\rho:\mathcal{X}(s,R)\to\AA^{1}_{x}$ restricts to the trivial $\AA^{2}$-bundle with total space $\mathrm{Spec}(F[x]_{s(x)}[y,t])$ over the principal open subset 
$\{s(x)\neq0\}$ of $\AA^{1}_{x}$.
For a zero $x_{0}\in\AA^{1}(F)$ of $s(x)$, we have: 
\begin{itemize}
\item[(c)] If the curve $\mathcal{C}:=\{a(y,t)=0\}$ is smooth, then it isomorphic to $\AA^{1}$ and $\rho^{-1}(x_{0})\cong\AA^{2}$. 
\item[(d)] If $\mathcal{C}$ is singular the Lin-Zaidenberg theorem \cite{zbMATH03899059}, \cite{zbMATH04105878}, \cite{zbMATH04105879} implies there exists an automorphism $\varphi^{*}$ of $F[y,t]$ 
such that $\varphi^{*}(a)=y^{\alpha_{1}}+t^{\alpha_{2}}$ for coprime integers $\alpha_{i}\geq 2$. 
Moreover, 
the fiber $\rho^{-1}(x_{0})$ is singular and isomorphic to $\mathcal{C}\times\AA^{1}$. 
\end{itemize}
\begin{example}
\label{exa:DefKR-3fold}
Every deformed Koras-Russell $3$-fold $\mathcal{X}(n,\alpha_{i},p)$ of the first kind \eqref{equation:firstkindpolynomial} is an example of a Koras-Russell fiber bundle.
The unique singular fiber of $\rho:\mathcal{X}(n,\alpha_{i},p)\to\AA^{1}_{x}$ is $\rho^{-1}(0)\cong\Gamma_{\alpha_{1},\alpha_{2}}\times\AA^{1}_{z}$.
\end{example}

The following key geometric result reveals a close connection between Koras-Russell fiber bundles and the existence of Zariski local neighborhoods around rational points on $\AA^{1}$. 
\begin{lemma}
\label{lem:KRBundle-loc-struct}
Suppose $\rho:\mathcal{X}(s,R)\to\AA^{1}_{x}$ is a Koras-Russell fiber bundle and let $x_{0}\in\AA^{1}(F)$. 
\begin{itemize}
\item[(1)] 
If $\rho^{-1}(x_{0})$ is smooth there exists an open neighborhood $\mathcal{U}$ of $x_{0}$ such that $\rho:\mathcal{X}(s,R)|_{\mathcal{U}}\to \mathcal{U}$ is a trivial $\AA^{2}$-bundle. 
\item[(2)] 
If $\rho^{-1}(x_{0})$ is singular there exist an automorphism $\varphi$ of $\AA^{1}$ mapping $x_{0}$ to the origin $0$, 
a deformed Koras-Russell $3$-fold $\mathcal{X}(n,\alpha_{i},p)$ of the first kind \eqref{equation:firstkindpolynomial},
an open neighborhood $\mathcal{U}$ of $x_{0}$,
and an isomorphism $\Psi:\mathcal{X}(s,R)|_{\mathcal{U}}\to \mathcal{X}(n,\alpha_{i},p)|_{\varphi(\mathcal{U})}$ rendering the following diagram commutative: 
\[
\xymatrix{
\mathcal{X}(s,R)|_{\mathcal{U}}  \ar[r]^-{\Psi}\ar[d]_{\rho} & 
\mathcal{X}(n,\alpha_{i},p)|_{\varphi(\mathcal{U})} \ar[d]^-{\pr_{x}} \\ 
\mathcal{U} \ar[r]^-{\varphi} & \varphi(\mathcal{U}). 
}
\]
\end{itemize}
\end{lemma}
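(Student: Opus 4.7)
The plan is to isolate $x_{0}$ by shrinking $\mathcal{U}$, exploit the structural information about $\mathcal{C}=\{a(y,t)=0\}$ provided by (c) or (d) to put $R$ into a normal form, and then either iteratively trivialize $\mathcal{X}(s,R)|_{\mathcal{U}}$ as an $\AA^{2}$-bundle (case (1)) or directly recognize it as a deformed Koras--Russell $3$-fold (case (2)).

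\emph{Common setup.} If $s(x_{0})\neq 0$, take $\mathcal{U}=\{s\neq 0\}$; then $\rho|_{\mathcal{U}}$ is by construction the trivial $\AA^{2}$-bundle $\Spec(F[x]_{s(x)}[y,t])\to\mathcal{U}$, and both cases are settled at once. Otherwise set $\mathcal{U}:=\AA^{1}\setminus(V(s)\setminus\{x_{0}\})$, so that $s(x)=(x-x_{0})^{n}u(x)$ on $\mathcal{U}$ with $u$ invertible. The substitution $z\mapsto u(x)z$ gives an isomorphism over $\mathcal{U}$ between $\mathcal{X}(s,R)|_{\mathcal{U}}$ and $\{(x-x_{0})^{n}z=R(x,y,t)\}|_{\mathcal{U}}$; using (b) I decompose $R=a(y,t)+(x-x_{0})b(x,y,t)$.

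\emph{Part (1).} Since $\rho^{-1}(x_{0})\cong\mathcal{C}\times\AA^{1}_{z}$ is smooth, so is $\mathcal{C}$, and (c) yields an automorphism $\varphi^{*}$ of $F[y,t]$ carrying $a$ to $y$; this reduces the equation to $(x-x_{0})^{n}z=y+(x-x_{0})b(x,y,t)$. On this hypersurface the regular function $y_{1}:=(x-x_{0})^{n-1}z-b(x,y,t)$ satisfies $y=(x-x_{0})y_{1}$, so the change of variables $y=(x-x_{0})y_{1}$ produces an isomorphism with $\{(x-x_{0})^{n-1}z=y_{1}+b(x,(x-x_{0})y_{1},t)\}$. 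Expanding $b(x,(x-x_{0})y_{1},t)=b(x,0,t)+(x-x_{0})y_{1}\tilde b(x,y_{1},t)$, then $b(x,0,t)=b(x_{0},0,t)+(x-x_{0})c(x,t)$, and finally absorbing $b(x_{0},0,t)$ into $y_{1}$ by the triangular automorphism $y_{1}\mapsto y_{1}-b(x_{0},0,t)$, reproduces an equation of the same shape with $n$ replaced by $n-1$. Iterating $n$ times reaches the graph $\{z=\tilde y+(x-x_{0})\tilde B(x,\tilde y,t)\}\cong\AA^{3}_{x,\tilde y,t}$, which is the desired trivial $\AA^{2}$-bundle over $\mathcal{U}$.

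\emph{Part (2).} Singularity of $\rho^{-1}(x_{0})$ forces $\mathcal{C}$ to be singular, and (d) provides $\varphi^{*}$ sending $a$ to $y^{\alpha_{1}}+t^{\alpha_{2}}$ for coprime $\alpha_{i}\geq 2$. Composing with the translation $\varphi\colon x\mapsto x-x_{0}$ of $\AA^{1}_{x}$ converts the equation to $\{x^{n}z=y^{\alpha_{1}}+t^{\alpha_{2}}+x\,p(x,y,t)\}|_{\varphi(\mathcal{U})}$ with $p(x,y,t):=b(x+x_{0},y,t)$; this matches $\mathcal{X}(n,\alpha_{i},p)|_{\varphi(\mathcal{U})}$ from \eqref{equation:firstkindpolynomial} provided $p(0,0,0)\in F^{\times}$ and $n\geq 2$. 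Both constraints come from the Jacobian criterion applied at the point $(x_{0},0,0,0)\in\mathcal{X}(s,R)$: the entries $s(x_{0})$, $\partial_{y}R(x_{0},0,0)$ and $\partial_{t}R(x_{0},0,0)$ all vanish because $\alpha_{i}\geq 2$, so the only possibly nonzero entry is $\partial_{x}(sz-R)=-b(x_{0},0,0)=-p(0,0,0)$, forcing $p(0,0,0)\neq 0$; in addition, $n=1$ would produce a further singular point at $(0,0,p(0,0,0),0)$, contradicting smoothness, hence $n\geq 2$. The principal technical point is the induction in part (1): each substitution has to be an honest isomorphism of schemes preserving the normal form, which hinges on $(x-x_{0})$ being a non-zero-divisor on the integral hypersurface and on $y_{1}$ arising as a regular function directly from the defining equation.
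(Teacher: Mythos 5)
Your argument is correct, and part (2) follows essentially the same route as the paper: reduce to $x_{0}=0$ with $s(x)=x^{n}\widetilde{s}(x)$, decompose $R=a+xb$, apply the Lin--Zaidenberg normal form $\varphi^{*}(a)=y^{\alpha_{1}}+t^{\alpha_{2}}$, extend to an $F[x]$-automorphism and lift it to the hypersurface, absorb the unit $\widetilde{s}$ into $z$, and extract $n\geq 2$ (and $p(0,0,0)\in F^{\times}$) from smoothness via the Jacobian criterion --- your explicit check at $(x_{0},0,0,0)$ is exactly the computation the paper leaves implicit. Where you genuinely diverge is part (1). The paper argues abstractly: the generic fiber of $\rho$ is $\AA^{2}$ over $F(x)$ and the closed fiber over $x_{0}$ is $\AA^{2}$, so Sathaye's theorem \cite{zbMATH03855256} over the discrete valuation ring $\mathcal{O}_{\AA^{1},x_{0}}$ forces $\mathcal{X}(s,R)\times_{\AA^{1}}\Spec(\mathcal{O}_{\AA^{1},x_{0}})\to\Spec(\mathcal{O}_{\AA^{1},x_{0}})$ to be a trivial $\AA^{2}$-bundle, which then spreads out to a neighborhood. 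You instead first invoke Abhyankar--Moh--Suzuki to rectify the smooth fiber curve to the coordinate line $\{y=0\}$ (which requires noting that smoothness of the scheme-theoretic fiber forces $R(x_{0},y,t)$ to be reduced, so that the automorphism really carries the polynomial $a$ to $y$ up to a constant), and then run a descending induction on $n$ via the substitution $y=(x-x_{0})y_{1}$, which is an honest isomorphism of hypersurfaces because $y_{1}=(x-x_{0})^{n-1}z-b(x,y,t)$ is regular on the source --- this is precisely the inverse--affine-modification trick the paper itself uses in \aref{lem:KRBundle-modification}. Your route is more elementary and self-contained (it trades the deep theorem of Sathaye for AMS plus explicit coordinate changes, and it exhibits the trivialization concretely), at the cost of being longer and specific to the hypersurface presentation; the paper's argument is shorter and would apply to any $\AA^{2}$-fibration with those two fibers, but rests on a nontrivial black box. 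Two small points worth tightening: in part (2) your $p$ should be the image of $b$ under the extended plane automorphism (not $b$ itself) composed with the translation, and in part (1) the rectifying automorphism of $F[y,t]$ must be extended to an $F[x]$-automorphism of $F[x][y,t]$ and lifted to $\mathcal{X}(s,R)$ exactly as in part (2) before the induction starts.
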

\begin{proof}
(1) The assumption implies $\rho^{-1}(x_{0})\cong\AA^{2}$.
Since the generic fiber of $\rho$ is isomorphic to $\AA^{2}$ over the function field $F(x)$ of $\AA^{1}$, 
letting $B=\mathrm{Spec}(\mathcal{O}_{\AA^{1},x_{0}})$ we deduce from \cite[Theorem 1]{zbMATH03855256} that $\mathrm{pr}_{B}:\mathcal{X}(s,R)\times_{\AA^{1}}B\to B$ 
is a trivial $\AA^{2}$-bundle. 
This implies the existence of $\mathcal{U}$ with the claimed properties.

(2) Since $\rho$ restricts to a trivial $\AA^{2}$-bundle over the principal open subset $\mathcal{D}_{s}\subset\AA^{1}$, 
where $s\neq 0$, 
$x_{0}$ is a zero of $s(x)$. 
Up to a coordinate change, we may assume $x_{0}=0$ and write $s(x)=x^{n}\widetilde{s}(x)$, 
where $n\geq1$ and $\widetilde{s}(0)\neq0$. 
This yields $R(x,y,t)=a(y,t)+xb(x,y,t)$ and $\rho^{-1}(0)=\{a(y,t)=0\}\times\mathrm{Spec}(F[z])$. 
Since by hypothesis $\mathcal{C}=\{a(y,t)=0\}$ is a singular curve, 
the Lin-Zaidenberg theorem shows there exists an $F$-automorphism $\varphi^{*}$ of $F[y,t]$ such that $\varphi^{*}(a)=y^{\alpha_{1}}+t^{\alpha_{2}}$
for coprime integers $\alpha_{i}\geq2$. 
This automorphism $\varphi^{*}$ extends an $F[x]$-automorphism $\Phi^{*}$ of $F[x][y,t]$ defined by $\Phi^*(\sum a_{ij}(x)y^it^j)=\sum a_{ij}(x)\varphi^*(y^it^j)$. 
Letting $p(x,y,t)=\Phi^*(b(x,y,t))$, 
we have by construction: 
\[
\Phi^*(R(x,y,t))=\Phi^*(a(y,t)+xb(x,y,t))=y^{\alpha_{1}}+t^{\alpha_{2}}+xp(x,y,t).
\]  
So $\Phi^*$ maps $s(x)$ identically to itself and it maps the ideal $\mathcal{I}=(s(x),R(x,y,t))$ isomorphically to $\mathcal{J}=(s(x),y^{\alpha_{1}}+t^{\alpha_{2}}+xp(x,y,t))$. 
By the universal property of affine modifications \cite[Proposition 2.1]{zbMATH01286199}, 
the corresponding automorphism $\Phi$ of $\AA^3=\Spec(F[x,y,t])$ lifts to an $\AA^{1}_{x}$-isomorphism between $\mathcal{X}(s,R)$ and the Koras-Russell fiber bundle: 
\[
\rho:
\widetilde{\mathcal{X}(s,R)}
:=
\{s(x)z=y^{\alpha_{1}}+t^{\alpha_{2}}+xp(x,y,t)\}
\to\AA^{1}_{x}.
\]
We define the closed subscheme $\mathcal{X}(n,\alpha_{i},p)$ of $\AA^{4}$ by the equation 
\[
\{x^{n}z=y^{\alpha_{1}}+t^{\alpha_{2}}+xp(x,y,t)\},
\]
and let $\mathcal{U}:=\mathcal{D}_{\widetilde{s}}\subset\AA^{1}$ be the principal open subset where $\widetilde{s}\neq 0$.
Then $\widetilde{\mathcal{X}(s,R)}|_{\mathcal{U}}$ and $\mathcal{X}(n,\alpha_{i},p)|_{\mathcal{U}}$ are isomorphic as $\mathcal{U}$-schemes since $\widetilde{s}(x)z$ is a coordinate function on 
$F[x]_{\widetilde{s}(x)}[y,z,t]$ over $F[x]_{\widetilde{s}(x)}$. 
To conclude $\mathcal{X}(n,\alpha_{i},p)$ is a deformed Koras-Russell $3$-fold as in Example \ref{exa:DefKR-3fold}, 
it remains to show $n\geq 2$. 
Since $\mathcal{X}(s,R)|_{\mathcal{U}}$ is smooth, 
$\mathcal{X}(n,\alpha_{i},p)$ is smooth in an open neighborhood of $\mathrm{pr}_x^{-1}(0)$, 
which implies $n\geq 2$ by the Jacobian criterion. 
\end{proof}

\begin{corollary}
\label{corollary:KRBundle-trivial} 
Suppose $\rho:\mathcal{X}(s,R)\to\AA^{1}_{x}$ is a Koras-Russell fiber bundle, 
and let $\mathcal{U}_{0}\subset\AA^{1}$ be the complement of the finitely many points for which the scheme-theoretic fiber of $\rho$ is singular. 
Then $\rho:\mathcal{X}(s,R)|_{\mathcal{U}_{0}}\to \mathcal{U}_{0}$ is a trivial $\AA^{2}$-bundle. 
\end{corollary}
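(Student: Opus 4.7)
The plan is to reduce global triviality of $\rho|_{\mathcal{U}_0}$ to two ingredients: the explicit trivialization of $\rho$ over the principal open $\mathcal{D}_s=\{s(x)\neq 0\}$ coming from the defining equation $s(x)z=R(x,y,t)$, together with the pointwise statement of \aref{lem:KRBundle-loc-struct}(1) at each closed point of $\mathcal{U}_0\setminus\mathcal{D}_s$.

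First I would observe that $\mathcal{U}_0\supseteq\mathcal{D}_s$. Indeed, over $\mathcal{D}_s$ the defining equation rearranges to $z=R(x,y,t)/s(x)$, realizing $\rho|_{\mathcal{D}_s}$ as the trivial projection $\mathcal{D}_s\times\AA^{2}_{y,t}\to\mathcal{D}_s$; in particular every fiber there is smooth. Hence any singular fiber of $\rho$ lies over a zero of $s(x)$, and $\mathcal{U}_0\setminus\mathcal{D}_s$ consists of finitely many ``good'' zeros of $s$, those where part (c) of the structure analysis following \aref{def:KRBundle} applies and the fiber is isomorphic to $\AA^{2}$. At each such point \aref{lem:KRBundle-loc-struct}(1) provides a Zariski open neighborhood on which $\rho$ is a trivial $\AA^{2}$-bundle. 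Combining these with the trivialization on $\mathcal{D}_s$ yields a Zariski open cover of $\mathcal{U}_0$ on which $\rho$ is trivial, so $\rho|_{\mathcal{U}_0}$ is a Zariski locally trivial $\AA^{2}$-bundle.

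To pass from Zariski local to global triviality, I would exploit that $\mathcal{U}_0$ is a principal open subscheme of $\AA^{1}$, so its coordinate ring is a principal ideal domain with trivial Picard group. A global form of the Sathaye-type theorem \cite{zbMATH03855256} invoked in the proof of the lemma then forces any such $\AA^{2}$-fibration to be trivial as a bundle.

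The main obstacle is precisely this last globalization step: Sathaye's theorem as cited is valid over a DVR, and the task is to glue the local trivializations produced by the lemma with the explicit one on the dense open $\mathcal{D}_s$ in such a way that the resulting transition data, a priori valued in $\mathrm{Aut}(\AA^{2})$ over the overlaps, can be normalized to the identity. Since in our situation the bundle is already trivial on $\mathcal{D}_s$, one concrete route is to track how the coordinates $(y,t)$ on $\mathcal{D}_s$ extend across each good zero of $s$: the transitions are automorphisms of $\AA^{2}$ over a localization of $F[x]$, and the triviality of the Picard group of $\mathcal{U}_0$ is what permits these transitions to be absorbed into a single global change of coordinates producing the desired trivialization.
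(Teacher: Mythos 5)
Your first step coincides with the paper's: \aref{lem:KRBundle-loc-struct}(1) applied at each point of $\mathcal{U}_{0}$ lying over a zero of $s$, together with the tautological trivialization over $\{s(x)\neq 0\}$, shows that $\rho|_{\mathcal{U}_{0}}$ is a Zariski locally trivial $\AA^{2}$-bundle. The gap is in the globalization, exactly where you flag ``the main obstacle.'' You correctly observe that the transition data on overlaps take values in $\mathrm{Aut}(\AA^{2})$ over localizations of $F[x]$, but the mechanism you propose for normalizing them --- triviality of the Picard group of $\mathcal{U}_{0}$ --- is not the right tool. The Picard group classifies $\GG_{m}$-valued cocycles (equivalently line bundles); its vanishing says nothing about cocycles valued in the infinite-dimensional group $\mathrm{Aut}(\AA^{2})$, which contains much more than $\mathrm{GL}_{2}$ (for instance all triangular automorphisms $(y,t)\mapsto(y+q(t),t)$). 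There is no general descent or cohomological-vanishing statement for such cocycles, and ``absorbing the transitions into a single global change of coordinates'' is precisely the assertion to be proved, not a consequence of $\mathrm{Pic}(\mathcal{U}_{0})=0$. Likewise, Sathaye's theorem is a statement over a discrete valuation ring and does not globalize by itself.

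The paper closes this gap with the Bass--Connell--Wright theorem \cite[Theorem 4.4]{zbMATH03577344}: a Zariski locally trivial $\AA^{n}$-bundle over an affine base is isomorphic, as a bundle, to the total space of a rank-$n$ vector bundle. This is the step that linearizes the structure group, reducing the transition data from $\mathrm{Aut}(\AA^{2})$ to $\mathrm{GL}_{2}$; only after that does the arithmetic of $\mathcal{O}(\mathcal{U}_{0})$ enter, namely that $\mathcal{O}(\mathcal{U}_{0})$ is a localization of $F[x]$ and hence a principal ideal domain, over which finitely generated projective modules are free. Your instinct to exploit the ring $\mathcal{O}(\mathcal{U}_{0})$ is correct, but it can only be brought to bear after this linearization, which is a genuine theorem and the missing ingredient in your argument.
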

\begin{proof}
By Lemma \ref{lem:KRBundle-loc-struct} (1), 
$\rho:\mathcal{X}(s,R)|_{\mathcal{U}_{0}}\to \mathcal{U}_{0}$ is a Zariski locally trivial $\AA^{2}$-bundle, 
hence isomorphic to a rank $2$ vector bundle by \cite[Theorem 4.4]{zbMATH03577344}, 
which is in fact trivial since $\mathcal{U}_{0}$ is an open subset of $\AA^{1}$.
\end{proof}

\begin{example}
For coprime integers $\alpha_{i}\geq 2$ we define $\mathcal{X}(s,R)\subset\AA^{4}$ by the equation: 
\[
\{x^{2}(x-1)z=(x-1)(y^{\alpha_{1}}+t^{\alpha_{2}})+x(y+t^{2})+x(x-1)\}.
\]
This is a smooth scheme by the Jacobian criterion.
We have $R(0,y,t)=-(y^{\alpha_{1}}+t^{\alpha_{2}})$, 
while for $x=1$:
\[
R(x,y,t)=(y+t^{2})+(x-1)(y^{\alpha_{1}}+t^{\alpha_{2}}+y+t^{2}+x).
\]
Here $\{y^{\alpha_{1}}+t^{\alpha_{2}}=0\}$ and $\{y+t^{2}=0\}$ satisfy hypothesis (1) of \aref{def:KRBundle};
hence $\rho:\mathcal{X}(s,R)\to\AA^{1}_{x}$ is a Koras-Russell fiber bundle. 
Furthermore, 
$\rho$ restricts to a trivial $\AA^{2}$-bundle over $\AA^{1}\setminus\{0\}$ by \aref{corollary:KRBundle-trivial}.
\end{example}

\begin{lemma}
\label{lem:KRBundle-modification}
Suppose $\rho:\mathcal{X}(s,R)\to\AA^{1}_{x}$ is a Koras-Russell fiber bundle, 
and $\rho^{-1}(x_{0})\cong \mathcal{C}\times\AA^{1}$ is singular for the rational point $x_{0}\in\AA^{1}(F)$.
Let $\pi:\widetilde{\mathcal{X}(s,R)}\to \mathcal{X}(s,R)$ be the affine modification of $\mathcal{X}(s,R)$ along the divisor $\{x=x_{0}\}$ with center $\{c\}\times\AA^{1}\subset\rho^{-1}(x_{0})$ 
over a smooth point $c$ on the curve $\mathcal{C}$.
\begin{itemize}
\item[(1)]
Then $\widetilde{\rho}=\rho\circ\pi:\widetilde{\mathcal{X}(s,R)}\to\AA^{1}_{x}$ is a Koras-Russell fiber bundle with $\widetilde{\rho}^{-1}(x_{0})\cong\AA^{2}$.
\item[(2)]
By restricting $\pi$ to $\widetilde{\mathcal{X}(s,R)}\setminus\widetilde{\rho}^{-1}(x_{0})$ we obtain an isomorphism and a commutative diagram: 
\[
\xymatrix{
\widetilde{\mathcal{X}(s,R)} \setminus\widetilde{\rho}^{-1}(x_{0}) \ar[rr]^-{\cong}\ar[dr]_-{\widetilde{\rho}} && \mathcal{X}(s,R)\setminus{\rho}^{-1}(x_{0}) \ar[dl]^-{\rho} \\ 
& \AA^{1}. 
}
\]
\end{itemize}
\end{lemma}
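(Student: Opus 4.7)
The plan is to reduce to the explicit local model supplied by \aref{lem:KRBundle-loc-struct}(2), carry out the affine modification in coordinates, and verify \aref{def:KRBundle} directly. By \aref{lem:KRBundle-loc-struct}(2), after precomposing with an automorphism of $\AA^{1}_{x}$ I may assume $x_{0}=0$ and replace $\mathcal{X}(s,R)|_{\mathcal{U}}$ over a neighborhood $\mathcal{U}$ of $0$ by a deformed Koras-Russell threefold $\mathcal{X}(n,\alpha_{i},p)=\{x^{n}z=y^{\alpha_{1}}+t^{\alpha_{2}}+xp(x,y,t)\}$ with $n\geq 2$. Under this identification $\mathcal{C}$ corresponds to $\Gamma_{\alpha_{1},\alpha_{2}}=\{y^{\alpha_{1}}+t^{\alpha_{2}}=0\}$, and the smooth point $c$ to some $(y_{0},t_{0})\in\Gamma_{\alpha_{1},\alpha_{2}}\setminus\{(0,0)\}$; without loss of generality $y_{0}\neq 0$. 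The center then has ideal $\mathcal{I}=(x,y-y_{0},t-t_{0})$ and the divisor has ideal $(x)$, forming a regular sequence at the smooth point under consideration.

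Next I would compute the modification by introducing $Y_{1}:=(y-y_{0})/x$ and $T_{1}:=(t-t_{0})/x$, so that $y=y_{0}+xY_{1}$ and $t=t_{0}+xT_{1}$. Taylor expanding and using $y_{0}^{\alpha_{1}}+t_{0}^{\alpha_{2}}=0$ gives
\[
(y_{0}+xY_{1})^{\alpha_{1}}+(t_{0}+xT_{1})^{\alpha_{2}}=x\bigl(\alpha_{1}y_{0}^{\alpha_{1}-1}Y_{1}+\alpha_{2}t_{0}^{\alpha_{2}-1}T_{1}+x\cdot q(x,Y_{1},T_{1})\bigr)
\]
for some polynomial $q$, so dividing $x^{n}z=y^{\alpha_{1}}+t^{\alpha_{2}}+xp(x,y,t)$ by $x$ yields
\[
\widetilde{\mathcal{X}(s,R)}|_{\mathcal{U}}=\{x^{n-1}z=\widetilde{R}(x,Y_{1},T_{1})\},\qquad \widetilde{R}(0,Y_{1},T_{1})=\alpha_{1}y_{0}^{\alpha_{1}-1}Y_{1}+\alpha_{2}t_{0}^{\alpha_{2}-1}T_{1}+p(0,y_{0},t_{0}).
\]
Outside $\mathcal{U}$ the modification is the identity, so globally $\widetilde{\mathcal{X}(s,R)}\cong\mathcal{X}(\widetilde{s},\widetilde{R})$ where $\widetilde{s}$ is obtained from $s$ by lowering its order of vanishing at $x_{0}$ by one.

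Finally I would verify (a) and (b) of \aref{def:KRBundle} for $(\widetilde{s},\widetilde{R})$: since $y_{0}\neq 0$ and $\mathrm{char}(F)=0$, the coefficient $\alpha_{1}y_{0}^{\alpha_{1}-1}$ is nonzero, so $\widetilde{R}(0,Y_{1},T_{1})$ is a nontrivial affine-linear polynomial whose zero locus in $\AA^{2}_{Y_{1},T_{1}}$ is a line. Hence $\widetilde{\rho}^{-1}(0)\cong\AA^{1}\times\AA^{1}_{z}\cong\AA^{2}$, giving both the fiber statement of (1) and the smooth case of condition (b). Smoothness of $\widetilde{\mathcal{X}(s,R)}$ near $\widetilde{\rho}^{-1}(0)$ follows from the Jacobian criterion applied to $x^{n-1}z-\widetilde{R}$, via $\partial_{Y_{1}}\widetilde{R}|_{x=0}=\alpha_{1}y_{0}^{\alpha_{1}-1}\neq 0$; elsewhere smoothness and condition (b) are inherited from $\mathcal{X}(s,R)$ because the modification is the identity off $\widetilde{\rho}^{-1}(x_{0})$. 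Part (2) then follows from the standard fact that an affine modification restricts to an isomorphism over the complement of its divisor, together with the identification $\widetilde{\rho}^{-1}(x_{0})=\mathcal{E}_{\mathcal{Z}/\mathcal{D}}$. The main technical obstacle is the bookkeeping of the Taylor expansion and, more substantively, verifying that the linear part of $\widetilde{R}(0,Y_{1},T_{1})$ is nonzero: this is precisely where the hypothesis that $c$ be a \emph{smooth} point of $\mathcal{C}$ (forcing $(y_{0},t_{0})\neq(0,0)$) together with $\mathrm{char}(F)=0$ enters the argument.
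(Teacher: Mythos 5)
Your proposal is correct and follows essentially the same route as the paper's proof: the paper likewise normalizes to $x_{0}=0$ with $s(x)=x^{n}\widetilde{s}(x)$ and the center over a fixed smooth point (it takes $c=(1,1)$ on $\{y^{\alpha_{1}}=t^{\alpha_{2}}\}$ rather than a general $(y_{0},t_{0})$), substitutes $y=xy_{1}+1$, $t=xt_{1}+1$, divides the defining equation by $x$, and reads off that the new fiber over $x_{0}$ is an affine line times $\AA^{1}_{z}$ because the linear part of the expansion is nonzero at a smooth point of the curve. The only cosmetic differences are that you route the normalization through \aref{lem:KRBundle-loc-struct}(2) and check smoothness by the Jacobian criterion, whereas the paper changes coordinates directly and deduces smoothness of the modification from smoothness of $\mathcal{X}(s,R)$ and of the center.
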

\begin{proof}
(1) Up to a change of coordinates, 
we may assume $x_{0}=0$, 
$s(x)=x^{n}\widetilde{s}(x)$, $n\geq 2$, $\widetilde{s}(0)\neq0$, 
and:
\[
R(x,y,t)
=
y^{\alpha_{1}}-t^{\alpha_{2}}+xb(x,y,t).
\]
Furthermore,
we may assume the center is the line over the point $c=(1,1)$ on the curve $\mathcal{C}=\{y^{\alpha_{1}}=t^{\alpha_{2}}\}$.
Since $\mathcal{X}(s,R)$ and the center $\{c\}\times\AA^{1}$ of the affine modification $\pi:\widetilde{\mathcal{X}(s,R)}\to \mathcal{X}(s,R)$ are smooth,
so is $\widetilde{\mathcal{X}(s,R)}$. 
It remains to check $\widetilde{\rho}^{-1}(0)\cong\AA^{2}$.
Setting $y=xy_{1}+1$ and $t=xt_{1}+1$ yields:
\begin{align*}
y^{\alpha_{1}}-t^{\alpha_{2}}+xb(x,y,t) & =x\left(\alpha_1y_{1}-\alpha_2t_{1})+b(0,1,1)\right)+x^{2}\widetilde{b}(x,y_{1},t_{1}).
\end{align*}
Thus the equation defining $\widetilde{\mathcal{X}(s,R)}$ in $\AA^{4}$ takes the form:
\[
\{x^{n-1}\widetilde{s}(x)z=x^{-1}R(x,xy_{1}+1,xt_{1}+1)=\alpha_1y_{1}-\alpha_2t_{1}+b(0,1,1)+x\widetilde{b}(x,y_{1},t_{1})\}.
\]
This implies $\widetilde{\rho}^{-1}(0)\cong\left\{\alpha_1y_{1}=\alpha_2t_{1}+b(0,1,1)\right\} \times\AA^{1}$ is the fiber product of an affine line in $\AA^{2}$ by $\AA^{1}$, 
i.e., 
it is isomorphic to $\AA^{2}$. 
Part (2) follows readily. 
\end{proof}

\subsection{$\AA^{1}$-contractibility of deformed Koras-Russell $3$-folds after simplicial suspension}
In this section we refine the results and techniques in \cite[\S3.2]{MRDF}.
\begin{lemma}
\label{lem:DefKR-Punctured-Weak-Equiv}
If $\mathcal{X}(n,\alpha_{i},p)$ is a deformed Koras-Russell $3$-fold of the first kind \eqref{equation:firstkindpolynomial}, 
then: 
$$
\mathcal{X}(n,\alpha_{i},p)
\setminus 
(\{x=y=t=0\}\cong\AA^{1}_{z})
\sim_{\AA^{1}}
\AA^{2}\setminus\{(0,0)\}.
$$
\end{lemma}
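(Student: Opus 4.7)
The plan is to show that $\mathcal{X}(n,\alpha_i,p)\setminus\AA^1_z$ and $\AA^2\setminus\{(0,0)\}$ are $\AA^{1}$-weakly equivalent by mediating between them through a common algebraic-space quotient $\mathfrak{S}(\alpha_1,\alpha_2)$, and comparing with the ``trivialized'' variant $\alpha_2=1$ in which the ambient $3$-fold happens to be affine $3$-space. First I would record the locally nilpotent derivation
\[
\partial = x^n\frac{\partial}{\partial y} + \Bigl(\alpha_1 y^{\alpha_1-1} + x\tfrac{\partial}{\partial y}p(x,y,t)\Bigr)\frac{\partial}{\partial z}
\]
on the coordinate ring of $\mathcal{X}(n,\alpha_i,p)$, whose associated $\GG_a$-action has fixed locus exactly $\AA^1_z$. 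The action is free on the complement, so the geometric quotient exists as an algebraic space $\mathfrak{S}(\alpha_1,\alpha_2)$ and the quotient map $\mathcal{X}(n,\alpha_i,p)\setminus\AA^1_z\to\mathfrak{S}(\alpha_1,\alpha_2)$ is an étale locally trivial $\AA^1$-bundle; in particular an $\AA^{1}$-weak equivalence.

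Next I would construct $\mathfrak{S}(\alpha_1,\alpha_2)$ explicitly by pulling $\pi=\pr_{x,t}$ back to the ramified $\mu_{\alpha_1}$-Galois cover of $\AA^2_{x,t}$. There the pulled-back quotient is realized by gluing two copies of $\AA^2\setminus\{(0,0)\}$ by the identity away from a family of cuspidal curves of type $\Gamma_{\alpha_1,\alpha_2}$; descending along the $\mu_{\alpha_1}$-action yields the factorization
\[
\xymatrix{
\mathcal{X}(n,\alpha_i,p)\setminus\AA^1_z \ar[dr] \ar[rr]^-{\pi_{\vert}} & & \AA^2_{x,t}\setminus\{(0,0)\} \\
& \mathfrak{S}(\alpha_1,\alpha_2). \ar[ur] &
}
\]
The gluing data depends only on $\alpha_1$ and the normalization of the cuspidal fiber, which gives $\mathfrak{S}(\alpha_1,\alpha_2)\cong\mathfrak{S}(\alpha_1,1)$. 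In the case $\alpha_2=1$ the defining equation $x^nz=y^{\alpha_1}+t+xp(x,y,t)$ is linear in $t$, so $\mathcal{X}(n,\alpha_1,1,p)\cong\AA^3$ with $\AA^1_z$ a coordinate axis, and $\pi_{\vert}$ is then a trivial $\AA^1$-bundle realizing $\mathcal{X}(n,\alpha_1,1,p)\setminus\AA^1_z\sim_{\AA^1}\AA^2_{x,t}\setminus\{(0,0)\}$.

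I would then form the fiber product
\[
W:=\bigl(\mathcal{X}(n,\alpha_i,p)\setminus\AA^1_z\bigr)\times_{\mathfrak{S}(\alpha_1,\alpha_2)}\bigl(\mathcal{X}(n,\alpha_1,1,p)\setminus\AA^1_z\bigr)
\]
in algebraic spaces over $\AA^2_{x,t}\setminus\{(0,0)\}$, and verify that both projections from $W$ are Zariski locally trivial $\AA^1$-bundles; concretely, the cyclic-cover description of $\mathfrak{S}(\alpha_1,\alpha_2)$ and the normalization of the cuspidal curves allow one to refine the étale local triviality of the $\GG_a$-quotient to Zariski local triviality after the base change by $\mathcal{X}(n,\alpha_1,1,p)\setminus\AA^1_z$. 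Chaining the resulting $\AA^{1}$-weak equivalences
\[
\mathcal{X}(n,\alpha_i,p)\setminus\AA^1_z \;\xleftarrow{\sim_{\AA^1}}\; W \;\xrightarrow{\sim_{\AA^1}}\; \mathcal{X}(n,\alpha_1,1,p)\setminus\AA^1_z \;\sim_{\AA^1}\; \AA^2\setminus\{(0,0)\}
\]
establishes the claim.

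The hard part will be the geometric construction of $\mathfrak{S}(\alpha_1,\alpha_2)$ and the verification that the two projections from $W$ are \emph{Zariski} (not merely étale) locally trivial $\AA^1$-bundles: this is exactly where the choice of the $\mu_{\alpha_1}$-Galois cover, the gluing of two copies of $\AA^2\setminus\{(0,0)\}$ along the cuspidal curves, and the isomorphism $\mathfrak{S}(\alpha_1,\alpha_2)\cong\mathfrak{S}(\alpha_1,1)$ intervene. Everything else is bookkeeping: identifying $\mathcal{X}(n,\alpha_1,1,p)$ with $\AA^3$, and concluding by homotopy invariance that $W$ is $\AA^{1}$-weakly equivalent to both of its projection targets.
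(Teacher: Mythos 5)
Your strategy is the same as the paper's: construct the algebraic space $\mathfrak{S}(\alpha_{1},\alpha_{2})$ by descending a gluing of copies of $\AA^{2}\setminus\{(0,0)\}$ along the cyclic $\mu_{\alpha_{1}}$-cover of $\AA^{2}_{x,t}$, use $\mathfrak{S}(\alpha_{1},\alpha_{2})\cong\mathfrak{S}(\alpha_{1},1)$, pass to the fiber product $W$ with the $\alpha_{2}=1$ model so that both projections become Zariski locally trivial $\AA^{1}$-bundles between honest schemes, and finish by identifying $\mathcal{X}(n,\alpha_{1},1,p)\setminus\AA^{1}_{z}$ with $\AA^{3}$ minus a line. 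Two of your intermediate assertions are nevertheless incorrect, even though the architecture survives them. First, the quotient map to $\mathfrak{S}(\alpha_{1},\alpha_{2})$ being an \emph{\'etale} locally trivial $\AA^{1}$-bundle over an algebraic space does not make it an $\AA^{1}$-weak equivalence ``in particular'': only Zariski (or Nisnevich) locally trivial affine bundles between objects of $\Sm_{F}$ come for free, the base here is not even a scheme, and the entire purpose of the fiber product $W$ is to repair exactly this defect. Since you do carry out the $W$-step the overclaim is redundant rather than fatal, but as written it would let a reader bypass $W$ and reach the conclusion by an invalid shortcut. (Also, the gluing uses $\alpha_{1}$ copies of $\AA^{2}\setminus\{(0,0)\}$, cyclically permuted by $\mu_{\alpha_{1}}$, not two.)

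Second, for $\alpha_{2}=1$ the restriction of $\pr_{x,t}$ to $\mathcal{X}(n,\alpha_{1},1,p)\setminus\AA^{1}_{z}$ is \emph{not} a trivial $\AA^{1}$-bundle over $\AA^{2}_{x,t}\setminus\{(0,0)\}$: over a point $(0,t_{0})$ with $t_{0}\neq0$ its fiber is $\{y^{\alpha_{1}}+t_{0}=0\}\times\AA^{1}_{z}$, a disjoint union of $\alpha_{1}$ lines, which is precisely why $\delta:\mathfrak{S}(\alpha_{1},1)\to\AA^{2}\setminus\{(0,0)\}$ remains an $\alpha_{1}$-sheeted \'etale cover over the punctured line $\{x=0\}$. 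Likewise ``the equation is linear in $t$'' does not by itself give $\mathcal{X}(n,\alpha_{1},1,p)\cong\AA^{3}$ when $p$ depends on $t$, since $t+xp(x,y,t)$ need not be a coordinate of $F[x,y][t]$. The correct argument, which is the one the paper uses, is that $\pr_{x}:\mathcal{X}(n,\alpha_{1},1,p)\to\AA^{1}_{x}$ has all closed and generic fibers isomorphic to $\AA^{2}$ and is therefore a trivial $\AA^{2}$-bundle by Sathaye's theorem; under this trivialization $\AA^{1}_{z}$ becomes a linear subspace $\{x=\alpha=0\}$ of $\AA^{1}\times\AA^{2}$, and the projection $\pr_{x,\alpha}$ (not $\pr_{x,t}$) exhibits the complement as a trivial $\AA^{1}$-bundle over $\AA^{2}\setminus\{(0,0)\}$. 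With these two repairs your outline coincides with the paper's proof of \aref{lem:DefKR-Punctured-Weak-Equiv}.
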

\begin{proof}
Using a coordinate change we may rewrite the defining equation for $\mathcal{X}(n,\alpha_{i},p)$ as:
\[
\mathcal{X}(\alpha_{2})
:=
\{x^{n}z=y^{\alpha_{1}}-t^{\alpha_{2}}+xp(x,y,t)\}, 
\alpha_{2}\geq 1.
\]
Let $\pi:=\mathrm{pr}_{x,t}:\mathcal{X}(\alpha_{2})\to\AA^{2}$ and define $\varphi_{\alpha_{2}}:\AA^{2}=\mathrm{Spec}(F[u,v])\to\AA^{2}$ by 
$(u,v)\mapsto(x,t)=(u^{\alpha_{1}}-v^{\alpha_{2}},v)$. 
Here $\varphi_{\alpha_{2}}$ is a cyclic Galois cover, 
fully ramified over the line $\{t=0\}\subset\AA^{2}$ and \'etale elsewhere. 
The fiber product $\mathcal{X}(\alpha_{2})\times_{\AA^{2}_{x,t}}\AA^{2}_{u,v}$ is isomorphic to the closed subscheme of $\AA^{4}=\mathrm{Spec}(F[u,v][y,z])$ defined by: 
\[
\widetilde{\mathcal{X}}(\alpha_{2})
:=
\{(u^{\alpha_{1}}-v^{\alpha_{2}})^{n}z=y^{\alpha_{1}}-v^{\alpha_{2}}+(u^{\alpha_{1}}-v^{\alpha_{2}})p(u^{\alpha_{1}}-v^{\alpha_{2}},y,v)\}.
\]
The Galois group $\mu_{\alpha_{1}}$ of the cover $\varphi_{\alpha_{2}}$ of $\widetilde{\mathcal{X}}(\alpha_{2})$ is generated by $(u,v,y,z)\mapsto(\varepsilon u,v,y,z)$,
where $\varepsilon\in F^{*}$ is a primitive $\alpha_{1}$-th root of unity. 
The projection $\widetilde{\pi}:=\mathrm{pr}_{u,v}:\widetilde{\mathcal{X}}(\alpha_{2})\to\AA^{2}$ restricts to a trivial $\AA^{1}$-bundle over the complement of the irreducible curve 
$\Gamma_{\alpha_{1},\alpha_{2}}:=\{u^{\alpha_{1}}-v^{\alpha_{2}}=0\}$. 
The fiber of $\widetilde{\pi}$ over $(0,0)$ is supported by the line $\widetilde{\AA^{1}_{z}}:=\{u=v=y=0\}$, 
while $\widetilde{\pi}^{-1}(\Gamma_{\alpha_{1},\alpha_{2}})$ is isomorphic to: 
\[
\mathrm{Spec}(F[u,v][y,z]/(u^{\alpha_{1}}-v^{\alpha_{2}},y^{\alpha_{1}}-u^{\alpha_{1}})).
\]
The normalization map $\nu:\widetilde{\Gamma_{\alpha_{1},\alpha_{2}}}=\mathrm{Spec}(F[w])\to \Gamma_{\alpha_{1},\alpha_{2}}$,
$w\mapsto(w^{\alpha_{2}},w^{\alpha_{1}})$, 
restricts to an isomorphism between $\widetilde{\Gamma_{\alpha_{1},\alpha_{2}}}\setminus\{(0,0)\}=\AA_{*}^{1}$ and $\Gamma_{\alpha_{1},\alpha_{2}}\setminus\{(0,0)\}$.
Moreover,
$\widetilde{\pi}^{-1}(\Gamma_{\alpha_{1},\alpha_{2}}\setminus\{(0,0)\})$ splits into a disjoint union of $\alpha_{1}$ irreducible components 
$\mathcal{D}_{\varepsilon^{i}}:=\{u^{\alpha_{1}}-v^{\alpha_{2}}=y-\varepsilon^{i}u\}\setminus\{u=v=y=0\}$,
$0\leq i\leq\alpha_{1}-1$, 
all isomorphic to $\AA_{*}^{1}\times\mathrm{Spec}(F[z])$.
It follows that $\widetilde{\pi}$ restricts to a smooth affine map
\[
\widetilde{\pi}_{i}:
\widetilde{\mathcal{X}}_{i}:=\widetilde{\mathcal{X}}(\alpha_{2})\setminus\widetilde{\AA^{1}_{z}}\cup\bigcup_{j\neq i}\mathcal{D}_{\varepsilon^{j}}
\to
\AA^{2}\setminus\{(0,0)\},
\]
and a locally trivial $\AA^{1}$-bundle since all its fibers are isomorphic to $\AA^{1}$ \cite[Theorem 1]{MR503968}.  

We let $\widetilde{\delta}:\widetilde{\mathcal{S}}\to\AA^{2}\setminus\{(0,0)\}$ denote the smooth scheme obtained by gluing the $\alpha_{1}$ copies 
\[
\widetilde{\mathcal{S}}_{\varepsilon^{i}}:=\mathrm{Spec}(F[u_{\varepsilon^{i}},v])\setminus\{(0,0)\}, 
0\leq i\leq\alpha_{1}-1,
\] 
of $\AA^{2}\setminus\{(0,0)\}$ by the identity map away from the cuspidal curves $\{u_{\varepsilon^{i}}^{\alpha_{1}}=v^{\alpha_{2}}\}$.
With this definition, 
\[
\widetilde{\pi}:
\widetilde{\mathcal{X}}(\alpha_{2})\setminus\widetilde{\AA^{1}_{z}}
\to
\AA^{2}\setminus\{(0,0)\}
\]
factors through the locally trivial $\AA^{1}$-bundle: 
\[
\widetilde{\rho}_{\alpha_{2}}:
\widetilde{\mathcal{X}}(\alpha_{2})\setminus\widetilde{\AA^{1}_{z}}
\to
\widetilde{\mathcal{S}}.
\]
The action of the Galois group $\mu_{\alpha_{1}}$ lifts readily to an action on $\widetilde{\mathcal{S}}$ defined by sending $(u_{\varepsilon^{i}},v)\in \widetilde{\mathcal{S}}_{\varepsilon^{i}}$ to 
$(\varepsilon u_{\varepsilon^{i+1}},v)\in\widetilde{\mathcal{S}}_{\varepsilon^{i+1}}$.
This yields $\mu_{\alpha_{1}}$-equivariant maps $\widetilde{\rho}_{\alpha_{2}}:\widetilde{\mathcal{X}}(\alpha_{2})\setminus\widetilde{\AA^{1}_{z}}\to\widetilde{\mathcal{S}}$ and 
$\widetilde{\delta}:\widetilde{\mathcal{S}}\to\AA^{2}\setminus\{(0,0)\}$. 

As algebraic spaces, 
the geometric quotients of $\widetilde{\mathcal{X}}(\alpha_{2})\setminus\widetilde{\AA^{1}_{z}}$ and $\AA^{2}\setminus\{(0,0)\}$ under the $\mu_{\alpha_{1}}$-actions are isomorphic to 
$\mathcal{X}(\alpha_{2})\setminus \AA^{1}_{z}$ respectively $\AA^{2}\setminus(\{0,0\})=\mathrm{Spec}(F[x,t])\setminus\{(0,0)\}$.
The geometric quotient for the $\mu_{\alpha_{1}}$-action on $\widetilde{\mathcal{S}}$ is an algebraic space $\mathfrak{\mathcal{S}}(\alpha_{1},\alpha_{2})$ (see e.g., \cite{MR3084720}). 
The map $\delta:\mathfrak{\mathcal{S}}(\alpha_{1},\alpha_{2})\to\AA^{2}\setminus(\{0,0\})$ induced by $\widetilde{\delta}$ has the following properties: 
\begin{itemize}
\item
$\delta$ restricts to an isomorphism away from the punctured affine line $\{x=0\}\cong\mathrm{Spec}(F[t^{\pm1}])$.
\item
The restriction of $\delta$ to $\delta^{-1}(\{x=0\})$ is an \'etale cyclic Galois cover of order $\alpha_{1}$ isomorphic to the projection 
$\mathrm{pr}_{t}:\mathrm{Spec}(F[t^{\pm1}][u]/(u^{\alpha_{1}}-t^{\alpha_{2}}))\to\mathrm{Spec}(F[t^{\pm1}])$.
\end{itemize}
Likewise, 
$\widetilde{\rho}_{\alpha_{2}}:\widetilde{\mathcal{X}}(\alpha_{2})\setminus\widetilde{\AA^{1}_{z}}\to\widetilde{\mathcal{S}}$ descends to an \'etale locally trivial $\AA^{1}$-bundle 
$\rho_{\alpha_{2}}:\mathcal{X}(\alpha_{2})\setminus \AA^{1}_{z}\to\mathfrak{\mathcal{S}}(\alpha_{1},\alpha_{2})$.
While the ramified Galois cover $\varphi_{\alpha_{2}}:\AA^{2}\to\AA^{2}$ depends on $\alpha_{1}$ and $\alpha_{2}$, 
the algebraic space $\delta:\mathfrak{\mathcal{S}}(\alpha_{1},\alpha_{2})\to\AA^{2}\setminus(\{0,0\})$ depends only on $\alpha_{1}$ as explained in \cite[Lemma 3.2]{MRDF} 
and \cite[\S1.1]{MR3285620}.
When $\alpha_{2}\geq2$, 
$\mathcal{X}(\alpha_{2})\setminus \AA^{1}_{z}$ and $\mathcal{X}(1)\setminus \AA^{1}_{z}$ are \'etale locally trivial $\AA^{1}$-bundles over the algebraic space 
$\mathfrak{\mathcal{S}}(\alpha_{1},\alpha_{2})=\mathfrak{\mathcal{S}}(\alpha_{1},1)=\mathfrak{\mathcal{S}}$;
in general, this is not a scheme (see \cite[Remark 1.1]{MR3285620}).
As a consequence, 
the fiber product 
\[
W:=
(\mathcal{X}(\alpha_{2})\setminus \AA^{1}_{z})\times_{\mathfrak{\mathcal{S}}}(\mathcal{X}(1)\setminus \AA^{1}_{z})
\]
is simultaneously an \'etale locally trivial $\AA^{1}$-bundle over $\mathcal{X}(\alpha_{2})\setminus \AA^{1}_{z}$ and $\mathcal{X}(1)\setminus \AA^{1}_{z}$ via the first and second projection, 
respectively. 
Since $\mathcal{X}(\alpha_{2})\setminus \AA^{1}_{z}$ is a scheme for $\alpha_{2}\geq1$, 
these bundles are locally trivial in the Zariski topology. 
It follows that $\mathrm{pr}_{1}:W\to \mathcal{X}(\alpha_{2})\setminus \AA^{1}_{z}$ and $\mathrm{pr}_{2}:W\to \mathcal{X}(1)\setminus \AA^{1}_{z}$ are $\AA^{1}$-weak equivalences. 

To complete the proof it remains to show $\mathcal{X}(1)\setminus \AA^{1}_{z}$ is $\AA^{1}$-weakly equivalent to $\AA^{2}\setminus\{(0,0)\}$.
When $\alpha_{2}=1$, 
the fiber $\mathrm{pr}_{x}^{-1}(0)$ of $\mathrm{pr}_{x}:\mathcal{X}(1)\to\AA^{1}_{x}$ is isomorphic to $\AA^{2}=\mathrm{Spec}(F[y,z])$,
and $\mathrm{pr}_{x}:\mathcal{X}(1)\to\AA^{1}_{x}$ is a trivial $\AA^{2}$-bundle by virtue of \cite{zbMATH03855256}. 
Furthermore, 
since $\AA^{1}_{z}\subset\mathrm{pr}_{x}^{-1}(0)$ is the line $\{y=0\}$, 
the chain of inclusions $\AA^{1}_{z}\subset\mathrm{pr}_{x}^{-1}(0)\subset \mathcal{X}(1)$ identifies with the linear subspaces $\{x=\alpha=0\}\subset\{x=0\}\subset\AA^{1}\times\AA^{2}$.
The projection $\mathrm{pr}_{x,\alpha}:\AA^{1}\times\AA^{2}\setminus\{x=\alpha=0\}\to\AA^{2}\setminus\{(0,0)\}$
yields the desired $\AA^{1}$-weak equivalence $\mathcal{X}(1)\setminus \AA^{1}_{z}\sim_{\AA^{1}}\AA^{2}\setminus\{(0,0)\}$. 
\end{proof}

The proof of the next result combines the techniques used in \cite{MRDF} and \cite{MR3549169}. 
\begin{theorem}
\label{theorem:DefKR-StableA1-cont} 
If $\mathcal{X}(n,\alpha_{i},p)$ is a deformed Koras-Russell $3$-fold of the first kind \eqref{equation:firstkindpolynomial}, 
then: 
$$
\Sigma_{s} \mathcal{X}(n,\alpha_{i},p)
\sim_{\AA^{1}}
\ast.
$$ 
 \end{theorem}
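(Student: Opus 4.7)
The plan is to compare $\mathcal{X}:=\mathcal{X}(n,\alpha_i,p)$ with the $\AA^1$-contractible affine plane $\AA^2_{x,t}$ along the projection $\pi=\pr_{x,t}$, singling out the $\GG_a$-fixed locus $\AA^1_z=\{x=y=t=0\}$ and its image $\{(0,0)\}$ as the degenerate loci. First I would verify that $\AA^1_z\hookrightarrow \mathcal{X}$ is a smooth codimension-$2$ closed embedding: along $\AA^1_z$, the Jacobian of $F = x^n z - y^{\alpha_1} - t^{\alpha_2} - xp(x,y,t)$ reduces to $dF = -p(0,0,0)\,dx$, which is nonzero because $p(0,0,0)\in F^{\times}$ and $n,\alpha_i\geq 2$, so that the normal bundle $\mathcal{N}_{\AA^1_z/\mathcal{X}}$ is trivial of rank $2$ on $\AA^1_z\cong\AA^1$. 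The projection $\pi$ sends $\AA^1_z$ to $\{(0,0)\}$ and restricts to $\pi_{\vert}\colon \mathcal{X}\setminus\AA^1_z\to\AA^2_{x,t}\setminus\{(0,0)\}$, which is an $\AA^1$-weak equivalence by \aref{lem:DefKR-Punctured-Weak-Equiv}.

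Next I would assemble the commutative diagram of homotopy cofiber sequences
\[
\xymatrix{
\mathcal{X}\setminus\AA^1_z \ar[r]\ar[d]_{\pi_{\vert}} & \mathcal{X} \ar[r]\ar[d]_{\pi} & \mathcal{X}/(\mathcal{X}\setminus\AA^1_z) \ar[d] \\
\AA^2_{x,t}\setminus\{(0,0)\} \ar[r] & \AA^2_{x,t} \ar[r] & \AA^2_{x,t}/(\AA^2_{x,t}\setminus\{(0,0)\}),
}
\]
and apply homotopy purity \cite[Theorem 3.2.23]{MR1813224} to identify the right-hand quotients as Thom spaces. Using the $\AA^1$-contractibility of $\AA^1_z$, the upper right cofiber becomes $(\AA^1_z)_+\wedge(\PP^1)^{\wedge 2}\sim_{\AA^1}(\PP^1)^{\wedge 2}$, matching the lower right cofiber $(\PP^1)^{\wedge 2}$; naturality of purity along the map of pairs $(\mathcal{X},\AA^1_z)\to(\AA^2_{x,t},\{(0,0)\})$ then yields that the rightmost vertical is an $\AA^1$-weak equivalence.

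Finally, extending the diagram into the Puppe cofiber sequence and applying the very weak five lemma \cite[Lemma 2.1]{MRDF}, with the leftmost and rightmost verticals both $\AA^1$-weak equivalences and $\AA^2_{x,t}$ itself $\AA^1$-contractible, I would conclude that $\Sigma_s \pi\colon\Sigma_s\mathcal{X}\to\Sigma_s\AA^2_{x,t}\sim_{\AA^1}\ast$ is an $\AA^1$-weak equivalence, delivering $\Sigma_s\mathcal{X}(n,\alpha_i,p)\sim_{\AA^1}\ast$. The principal obstacle is justifying that the rightmost vertical really is an $\AA^1$-weak equivalence: since $\pi$ is not transversal to $\{(0,0)\}$ — the scheme-theoretic fiber $\pi^{-1}\{(0,0)\}$ is a thickening of $\AA^1_z$ of multiplicity $\alpha_1$ in the $y$-direction — the differential $d\pi\colon\mathcal{N}_{\AA^1_z/\mathcal{X}}\to (\pi|_{\AA^1_z})^{\ast}\mathcal{N}_{\{(0,0)\}/\AA^2_{x,t}}$ has rank only $1$. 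Confirming that the induced Thom-space map is nonetheless an $\AA^1$-equivalence will plausibly require factoring $\pi$ through the $\mu_{\alpha_1}$-cover used in the proof of \aref{lem:DefKR-Punctured-Weak-Equiv}, or combining purity with the Milnor--Witt $K$-theory calculation alluded to in the introduction together with the stable $\AA^1$-contractibility from \aref{corollary:deformed KR stable}.
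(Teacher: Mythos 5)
You have correctly isolated the point where the naive argument breaks down, but the proposal stops exactly there, so as written it is a plan rather than a proof. There are in fact two gaps of the same nature. First, \aref{lem:DefKR-Punctured-Weak-Equiv} produces an $\AA^{1}$-weak equivalence $\mathcal{X}\setminus\AA^{1}_{z}\sim_{\AA^{1}}\AA^{2}\setminus\{(0,0)\}$ only through a zig-zag (the correspondence $W$ over the algebraic space $\mathfrak{S}(\alpha_{1},\alpha_{2})$ and the auxiliary threefold $\mathcal{X}(1)$, whose final comparison with $\AA^{2}\setminus\{(0,0)\}$ uses the projection $\pr_{x,\alpha}$ in new coordinates, not $\pr_{x,t}$); it does not assert that the specific map $\pi_{\vert}$ is an $\AA^{1}$-weak equivalence, so even the left vertical in your diagram requires further justification. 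Second, as you observe, the right vertical cannot be obtained from naturality of purity because $d\pi$ degenerates on normal bundles along $\AA^{1}_{z}$.

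The paper resolves both issues at once by discarding the map $\pi$ from the final step altogether. It uses only the abstract identifications $\mathcal{X}/(\mathcal{X}\setminus\AA^{1}_{z})\sim_{\AA^{1}}(\PP^{1})^{\wedge2}$ (purity plus triviality of the normal bundle) and $\Sigma_{s}(\mathcal{X}\setminus\AA^{1}_{z})\sim_{\AA^{1}}\Sigma_{s}(\AA^{2}\setminus\{(0,0)\})\sim_{\AA^{1}}(\PP^{1})^{\wedge2}$, and studies the Puppe connecting map $\partial\colon \mathcal{X}/(\mathcal{X}\setminus\AA^{1}_{z})\to\Sigma_{s}(\mathcal{X}\setminus\AA^{1}_{z})$ as an element of $[(\PP^{1})^{\wedge2},(\PP^{1})^{\wedge2}]_{\AA^{1}}\cong\mathbf{K}_{0}^{\mathrm{MW}}(F)$. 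The long exact sequence in $\mathbf{K}_{2}^{\mathrm{MW}}$-cohomology of the cofiber sequence exhibits multiplication by this element between two copies of $\mathbf{K}_{0}^{\mathrm{MW}}(F)$, with kernel and cokernel controlled by $H^{1}(\mathcal{X},\mathbf{K}_{2}^{\mathrm{MW}})$ and $H^{2}(\mathcal{X},\mathbf{K}_{2}^{\mathrm{MW}})$; these vanish because $\mathcal{X}\wedge(\PP^{1})^{\wedge m}\sim_{\AA^{1}}\ast$ for $m\gg0$ by \aref{theorem:higherChowGroups} and \aref{corollary:deformed KR stable}. Hence $\partial$ corresponds to a unit, is an $\AA^{1}$-weak equivalence, and its cofiber $\Sigma_{s}\mathcal{X}(n,\alpha_{i},p)$ is $\AA^{1}$-contractible. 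This is essentially the second fix you gesture at, so you were pointing at the right tools; but until that computation is carried out the argument is incomplete, and the first route you suggest (factoring through the $\mu_{\alpha_{1}}$-cover to repair transversality) is not what the paper does and would still leave you needing to compare the two purity identifications.
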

\begin{proof}
Recall $\mathcal{X}(n,\alpha_{i},p):=\{x^{n}z=y^{\alpha_{1}}+t^{\alpha_{2}}+xp(x,y,t)\}\subset\AA^{4}$ and $\AA^{1}_{z}\cong \{x=y=t=0\}\subset \mathcal{X}(n,\alpha_{i},p)$. 
Lemma \ref{lem:DefKR-Punctured-Weak-Equiv} shows $\mathcal{X}(n,\alpha_{i},p)\setminus \AA^{1}_{z}$ is $\AA^{1}$-weakly equivalent to $\AA^{2}\setminus\{(0,0)\}$. 
Since $\AA^{1}_{z}$ has trivial normal bundle in $\mathcal{X}(n,\alpha_{i},p)$, 
homotopy purity \cite[Theorem 3.2.23]{MR1813224} implies $\AA^{1}$-weak equivalences:
\[
\mathcal{X}(n,\alpha_{i},p)/(\mathcal{X}(n,\alpha_{i},p)\setminus \AA^{1}_{z})
\sim_{\AA^{1}}
(\AA^{1}_{z})_{+}\wedge(\mathbb{P}^{1})^{\wedge2}
\sim_{\AA^{1}}
(\mathbb{P}^{1})^{\wedge2}.
\]
Since $\Sigma_{s}\AA^{2}\setminus\{(0,0)\}$ is $\AA^{1}$-weakly equivalent to $S^{2}\wedge(\mathbb{G}_{m})^{\wedge2}\sim_{\AA^{1}} (\mathbb{P}^{1})^{\wedge2}$, 
the cofiber sequence 
\begin{equation}
\label{equation:cofibersequencedeformed1kind}
\mathcal{X}(n,\alpha_{i},p)\setminus \AA^{1}_{z}\to \mathcal{X}(n,\alpha_{i},p)\to \mathcal{X}(n,\alpha_{i},p)/(\mathcal{X}(n,\alpha_{i},p)\setminus \AA^{1}_{z})
\end{equation}
induces via simplicial suspension the connecting map:  
\begin{equation}
\label{equation:connectingfirstkind}
\mathcal{X}(n,\alpha_{i},p)/(\mathcal{X}(n,\alpha_{i},p)\setminus \AA^{1}_{z})
\sim_{\AA^{1}}
(\mathbb{P}^{1})^{\wedge2}
\stackrel{\partial}{\to}
\Sigma_{s} \mathcal{X}(n,\alpha_{i},p)\setminus \AA^{1}_{z}
\sim_{\AA^{1}}
\Sigma_{s}\AA^{2}\setminus\{(0,0)\}
\sim_{\AA^{1}}
(\mathbb{P}^{1})^{\wedge2}.
\end{equation}
To conclude that $\Sigma_{s} \mathcal{X}(n,\alpha_{i},p)$ is $\AA^{1}$-contractible, it suffices to show \eqref{equation:connectingfirstkind} is an $\AA^{1}$-weak equivalence.
There are isomorphisms for Milnor-Witt $K$-theory sheaves
\[
H^{1}(\mathcal{X}(n,\alpha_{i},p)\setminus \AA^{1}_{z},\mathbf{K}_{2}^{\mathrm{MW}})
\cong 
H^{1}(\AA^{2}\setminus\{(0,0)\},\mathbf{K}_{2}^{\mathrm{MW}})
\cong
\mathbf{K}_{0}^{\mathrm{MW}}(F)
\cong 
H^{2}((\mathbb{P}^{1})^{\wedge2},\mathbf{K}_{2}^{\mathrm{MW}}),
\]
so that the long exact sequence in cohomology associated to \eqref{equation:cofibersequencedeformed1kind} takes the form:
\[
\cdots
\to 
H^{1}(\mathcal{X}(n,\alpha_{i},p),\mathbf{K}_{2}^{\mathrm{MW}})
\to
\mathbf{K}_{0}^{\mathrm{MW}}(F)
\stackrel{\alpha}{\to}\mathbf{K}_{0}^{\mathrm{MW}}(F)
\to 
H^{2}(\mathcal{X}(n,\alpha_{i},p),\mathbf{K}_{2}^{\mathrm{MW}})
\to
\cdots.
\]
The $\mathbf{K}_{0}^{\mathrm{MW}}(F)$-linear map $\alpha$ is multiplication by the element of $\mathbf{K}_{0}^{\mathrm{MW}}(F)$ corresponding to $\partial$ in \eqref{equation:connectingfirstkind} 
under the isomorphism $[(\mathbb{P}^{1})^{\wedge2},(\mathbb{P}^{1})^{\wedge2}]_{\AA^{1}}\cong\mathbf{K}_{0}^{\mathrm{MW}}(F)$.
We claim $\alpha$ is an isomorphism, 
in fact: 
\[
H^{1}(\mathcal{X}(n,\alpha_{i},p),\mathbf{K}_{2}^{\mathrm{MW}})=H^{2}(\mathcal{X}(n,\alpha_{i},p),\mathbf{K}_{2}^{\mathrm{MW}})=0.
\]
Since $H^{i+m}((\mathbb{P}^{1})^{\wedge m},\mathbf{K}_{2+m}^{\mathrm{MW}})=0$ for $i\geq1$, the fact that there is a cofiber sequence 
\[(\PP^1)^{\wedge m}\to \mathcal{X}(n,\alpha_{i},p)_{+}\wedge (\PP^1)^{\wedge m}\to \mathcal{X}(n,\alpha_{i},p)\wedge(\mathbb{P}^{1})^{\wedge m}\] 
and
\[
H^{i}(\mathcal{X}(n,\alpha_{i},p),\mathbf{K}_{2}^{\mathrm{MW}})
\cong 
H^{i+m}(\mathcal{X}(n,\alpha_{i},p)_{+}\wedge(\mathbb{P}^{1})^{\wedge m},\mathbf{K}_{2+m}^{\mathrm{MW}})
\]
for $i,m\geq1$ \cite{zbMATH06170873}, 
it suffices to show $\mathcal{X}(n,\alpha_{i},p)\wedge(\mathbb{P}^{1})^{\wedge m}\sim_{\AA^{1}}\ast$ for $m\gg 0$. 
This follows from \aref{theorem:higherChowGroups} since for every smooth affine scheme $\mathcal{Y}$ the pullback map
\[
\mathrm{CH}_{\ast}(\mathcal{Y})
\to
\mathrm{CH_{\ast}}(\mathcal{X}(n,\alpha_{i},p)\times\mathcal{Y})
\]
is an isomorphism, 
owing to \aref{corollary:deformed KR stable}.
\end{proof}

Let $\sigma=\mathrm{pr}_{x,y,t}:\mathcal{X}(n,\alpha_{i},p)\rightarrow \AA^3$ be the morphism expressing $\mathcal{X}(n,\alpha_{i},p)$ as the affine modification of $\AA^3$ along $\ddiv (x^n)$ along 
$\ddiv(x^n)$ with center at the closed subscheme defined by the ideal $(x^{n},y^{\alpha_{1}}+t^{\alpha_{2}}+xp(x,y,t))$. 
Let $\pi:\widetilde{\mathcal{X}(n,\alpha_{i},p)}\to \mathcal{X}(n,\alpha_{i},p)$ be the affine modification of $\mathcal{X}(n,\alpha_{i},p)$ along $\ddiv(x)$  with center 
$\widetilde{Z}:\{q\}\times\AA^{1}\subset\rho^{-1}(0)$ for a smooth point $q$ on the cuspidal curve $\Gamma_{\alpha_{1},\alpha_{2}}$. 
\aref{corollary:KRBundle-trivial} and \aref{lem:KRBundle-modification} imply the composite 
\[
\widetilde{\rho}=\rho\circ\pi:\widetilde{\mathcal{X}(n,\alpha_{i},p)}\to\AA^{1}_{x}
\]
is a trivial $\AA^{2}$-bundle. 
This yields a commutative diagram of horizontal cofiber sequences: 
\[
\xymatrix{
\widetilde{\mathcal{X}(n,\alpha_{i},p)}\setminus\widetilde{\rho}^{-1}(0) \ar[r]\ar[d]_{\cong} & 
\widetilde{\mathcal{X}(n,\alpha_{i},p)} \ar[r]\ar[d]_{\pi} &
\widetilde{\mathcal{X}(n,\alpha_{i},p)}/(\widetilde{\mathcal{X}(n,\alpha_{i},p)}\setminus\widetilde{\rho}^{-1}(0))\sim_{\AA^{1}}\AA_{+}^{2}\wedge\mathbb{P}^{1}\sim_{\AA^{1}}\mathbb{P}^{1} \ar[d]_{\overline{\pi}}\\ 
\mathcal{X}(n,\alpha_{i},p)\setminus\rho^{-1}(0) \ar[r]\ar[d]_{\cong} & 
\mathcal{X}(n,\alpha_{i},p)  \ar[r]\ar[d]_{\sigma}& \mathcal{X}(n,\alpha_{i},p)/(\mathcal{X}(n,\alpha_{i},p)\setminus\rho^{-1}(0))\ar[d]_{\overline{\sigma}}\\
\AA^{1}\setminus\{0\}\times\AA^{2} \ar[r]& 
\AA^{1}\times\AA^{2} \ar[r]&
\AA^{1}\times\AA^{2}/(\AA^{1}\setminus\{0\}\times\AA^{2})\sim_{\AA^{1}}\AA_{+}^{2}\wedge\mathbb{P}^{1}\sim_{\AA^{1}}\mathbb{P}^{1}.
}
\]
Since $\widetilde{\mathcal{X}(n,\alpha_{i},p)}\cong\AA^{1}\times\AA^{2}$ is $\AA^{1}$-contractible and $\Sigma_{s} \mathcal{X}(n,\alpha_{i},p)$ 
is $\AA^{1}$-contractible by \aref{theorem:DefKR-StableA1-cont}, 
we obtain the induced $\AA^{1}$-weak equivalences
\begin{equation}
\label{equation:sigma1firstkind}
\Sigma_{s}\pi:\Sigma_{s}\widetilde{\mathcal{X}(n,\alpha_{i},p)}
\overset{\sim_{\AA^{1}}}{\to}
\Sigma_{s} \mathcal{X}(n,\alpha_{i},p),
\end{equation}
and: 
\begin{equation}
\label{equation:sigma2firstkind}
\Sigma_{s}\sigma:\Sigma_{s} \mathcal{X}(n,\alpha_{i},p)
\overset{\sim_{\AA^{1}}}{\to}
\Sigma_{s}(\AA^{1}\times\AA^{2}).
\end{equation}
\begin{corollary}
\label{corollary:ThomSpacesIso} 
There are naturally induced $\AA^{1}$-weak equivalences 
\[
\Sigma_{s}\overline{\pi}:\Sigma_{s}\mathbb{P}^{1}
\overset{\sim_{\AA^{1}}}{\to}
\Sigma_{s}(\mathcal{X}(n,\alpha_{i},p)/(\mathcal{X}(n,\alpha_{i},p)\setminus\rho^{-1}(0)),
\]
and: 
\[
\Sigma_{s}\overline{\sigma}:
\Sigma_{s}(\mathcal{X}(n,\alpha_{i},p)/(\mathcal{X}(n,\alpha_{i},p)\setminus\rho^{-1}(0))
\overset{\sim_{\AA^{1}}}{\to}
\Sigma_{s}\mathbb{P}^{1}.
\]
\end{corollary}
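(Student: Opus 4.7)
The plan is to deduce both $\AA^{1}$-weak equivalences directly from the long cofiber sequences obtained by applying simplicial suspension to the two stacked diagrams of horizontal cofiber sequences appearing just before the statement of the corollary.

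First I would treat $\Sigma_{s}\overline{\pi}$. Applying $\Sigma_{s}$ preserves cofiber sequences, so the upper square of cofiber sequences produces a morphism of cofiber sequences in which the leftmost vertical arrow $\Sigma_{s}(\widetilde{\mathcal{X}(n,\alpha_{i},p)}\setminus\widetilde{\rho}^{-1}(0))\to \Sigma_{s}(\mathcal{X}(n,\alpha_{i},p)\setminus \rho^{-1}(0))$ is an isomorphism (it is induced by the isomorphism $\widetilde{\mathcal{X}(n,\alpha_{i},p)}\setminus\widetilde{\rho}^{-1}(0)\overset{\cong}{\to}\mathcal{X}(n,\alpha_{i},p)\setminus\rho^{-1}(0)$ from \aref{lem:KRBundle-modification}(2)), and the middle vertical arrow $\Sigma_{s}\pi$ is an $\AA^{1}$-weak equivalence by \eqref{equation:sigma1firstkind}. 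Extending to a long cofiber sequence in the pointed $\AA^{1}$-homotopy category and invoking the weak five lemma for pointed model categories \cite[Lemma~2.1]{MRDF} forces the induced map on the homotopy cofibers, namely $\Sigma_{s}\overline{\pi}$, to be an $\AA^{1}$-weak equivalence. Since the source is identified with $\Sigma_{s}\mathbb{P}^{1}$ by homotopy purity applied to the trivial normal bundle of $\widetilde{\rho}^{-1}(0)\cong \AA^{2}$ in $\widetilde{\mathcal{X}(n,\alpha_{i},p)}\cong \AA^{1}\times \AA^{2}$, this gives the first claim.

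The argument for $\Sigma_{s}\overline{\sigma}$ is entirely parallel, using the lower square of cofiber sequences in the same diagram. There the leftmost vertical arrow is an isomorphism $\mathcal{X}(n,\alpha_{i},p)\setminus\rho^{-1}(0)\overset{\cong}{\to}(\AA^{1}\setminus\{0\})\times\AA^{2}$ because $\rho$ restricts to a trivial $\AA^{2}$-bundle over $\AA^{1}\setminus\{0\}$ by \aref{corollary:KRBundle-trivial}; the middle vertical arrow $\Sigma_{s}\sigma$ is an $\AA^{1}$-weak equivalence by \eqref{equation:sigma2firstkind}. The same weak five lemma argument then shows that $\Sigma_{s}\overline{\sigma}$ is an $\AA^{1}$-weak equivalence, whose target is identified with $\Sigma_{s}\mathbb{P}^{1}$ via the homotopy purity identification $\AA^{1}\times\AA^{2}/((\AA^{1}\setminus\{0\})\times\AA^{2})\sim_{\AA^{1}}\AA^{2}_{+}\wedge\mathbb{P}^{1}\sim_{\AA^{1}}\mathbb{P}^{1}$.

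I do not expect any serious obstacle here: both statements are formal consequences of the two diagrams already set up, together with the previously established $\AA^{1}$-weak equivalences \eqref{equation:sigma1firstkind} and \eqref{equation:sigma2firstkind}. The only point that requires a little care is making sure the identifications of the two cofibers with $\Sigma_{s}\mathbb{P}^{1}$ used in the corollary are those induced by homotopy purity and compatible with $\overline{\pi}$ and $\overline{\sigma}$, so that the conclusions can be phrased in terms of $\Sigma_{s}\mathbb{P}^{1}$ and not only an abstract cofiber. This is immediate from the construction of the diagram, since the vertical maps on cofibers are precisely those induced on Thom spaces by $\pi$ and $\sigma$.
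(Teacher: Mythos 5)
Your proposal is correct and follows essentially the same route as the paper: the corollary is stated there as an immediate consequence of the preceding diagram of cofiber sequences together with the $\AA^{1}$-weak equivalences \eqref{equation:sigma1firstkind} and \eqref{equation:sigma2firstkind}, with the identifications of the cofibers with $\Sigma_{s}\mathbb{P}^{1}$ coming from homotopy purity exactly as you describe. The only cosmetic difference is that concluding the map on cofibers is an equivalence from the left and middle verticals needs only functoriality of homotopy cofibers (the gluing lemma), so your appeal to the very weak five lemma of \cite[Lemma 2.1]{MRDF} is harmless but slightly heavier than necessary.
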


\subsection{$\AA^{1}$-contractibility of Koras-Russell fiber bundles after simplicial suspension}
In this section we prove \aref{thm:KRBundle-Stable-A1Cont} stated in the introduction.
\begin{theorem} 
\label{thm:KRBundle-Stable-A1Contsimplicual}
Let $\rho:\mathcal{X}(s,R)\to\AA^{1}_{x}$ be a Koras-Russell fiber bundle. 
Then $\Sigma_{s} \mathcal{X}(s,R)$ is $\AA^{1}$-contractible. 
\end{theorem}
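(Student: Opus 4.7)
The plan is to induct on the number of singular fibers of $\rho:\mathcal{X}(s,R)\to\AA^{1}_{x}$. The base case of zero singular fibers is immediate from \aref{corollary:KRBundle-trivial}: the map $\rho$ is then a trivial $\AA^{2}$-bundle over all of $\AA^{1}$, so $\mathcal{X}(s,R)\cong\AA^{3}$ and $\Sigma_{s}\mathcal{X}(s,R)\sim_{\AA^{1}}\ast$ follows from $\AA^{1}$-contractibility of affine space. For the inductive step, assume the theorem for all Koras-Russell fiber bundles with strictly fewer singular fibers, suppose $\rho$ has $k\geq 1$ singular fibers, and fix one such fiber $\rho^{-1}(x_{0})\cong\mathcal{C}\times\AA^{1}$. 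Apply \aref{lem:KRBundle-modification} at a smooth point $c\in\mathcal{C}$ to produce the affine modification $\pi:\widetilde{\mathcal{X}(s,R)}\to\mathcal{X}(s,R)$ with center $\{c\}\times\AA^{1}\subset\rho^{-1}(x_{0})$. The composition $\widetilde{\rho}=\rho\circ\pi$ is again a Koras-Russell fiber bundle, now with $\widetilde{\rho}^{-1}(x_{0})\cong\AA^{2}$ and an isomorphism $\widetilde{\mathcal{X}(s,R)}\setminus\widetilde{\rho}^{-1}(x_{0})\cong\mathcal{X}(s,R)\setminus\rho^{-1}(x_{0})$ of $\AA^{1}$-schemes; in particular $\widetilde{\rho}$ has exactly $k-1$ singular fibers, so the inductive hypothesis yields $\Sigma_{s}\widetilde{\mathcal{X}(s,R)}\sim_{\AA^{1}}\ast$.

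Next I would assemble the commutative diagram of cofiber sequences obtained by collapsing the distinguished fibers in both source and target of $\pi$:
\[
\xymatrix{
\mathcal{X}(s,R)\setminus\rho^{-1}(x_{0}) \ar[r]\ar[d]_-{\cong} & \widetilde{\mathcal{X}(s,R)} \ar[r]\ar[d]_-{\pi} & \widetilde{\mathcal{X}(s,R)}/(\widetilde{\mathcal{X}(s,R)}\setminus\widetilde{\rho}^{-1}(x_{0})) \ar[d]^-{\overline{\pi}} \\
\mathcal{X}(s,R)\setminus\rho^{-1}(x_{0}) \ar[r] & \mathcal{X}(s,R) \ar[r] & \mathcal{X}(s,R)/(\mathcal{X}(s,R)\setminus\rho^{-1}(x_{0})).
}
\]
The leftmost vertical map is an isomorphism by \aref{lem:KRBundle-modification}(2). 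Once $\Sigma_{s}\overline{\pi}$ is shown to be an $\AA^{1}$-weak equivalence, the very weak five lemma for pointed model categories (as employed in the proof of \aref{theorem:A1affinemodifications}) applied to the simplicially suspended cofiber sequences, together with the inductive hypothesis $\Sigma_{s}\widetilde{\mathcal{X}(s,R)}\sim_{\AA^{1}}\ast$, forces $\Sigma_{s}\mathcal{X}(s,R)\sim_{\AA^{1}}\ast$ and completes the induction.

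The crux of the argument, and the place where I expect the main obstacle to lie, is the verification that $\Sigma_{s}\overline{\pi}$ is an $\AA^{1}$-weak equivalence. Homotopy purity does not apply directly on the bottom row because $\rho^{-1}(x_{0})\cong\mathcal{C}\times\AA^{1}$ is singular. The idea is to localize: both quotients are supported on arbitrarily small Zariski (hence Nisnevich) neighborhoods of $x_{0}\in\AA^{1}$, so I would choose an open $\mathcal{U}\ni x_{0}$ as in \aref{lem:KRBundle-loc-struct}(2), producing an $\AA^{1}$-isomorphism $\Psi:\mathcal{X}(s,R)|_{\mathcal{U}}\overset{\cong}{\to}\mathcal{X}(n,\alpha_{i},p)|_{\varphi(\mathcal{U})}$ onto the restriction of a suitable deformed Koras-Russell $3$-fold. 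After checking that $\Psi$ is compatible with the affine modifications on both sides (both are modifications along the singular fiber with center a line over a smooth point of the cuspidal curve, so compatibility is essentially definitional), Nisnevich excision identifies $\overline{\pi}$ with the map $\overline{\pi}$ of \aref{corollary:ThomSpacesIso} in the deformed Koras-Russell setting. Since that map becomes an $\AA^{1}$-weak equivalence after simplicial suspension, the inductive step goes through, and the equivalence $\PP^{1}\sim_{\AA^{1}}\GG_{m}\wedge S^{1}$ gives the second identification $\mathcal{X}(s,R)\wedge\PP^{1}\sim_{\AA^{1}}\ast$ stated in \aref{thm:KRBundle-Stable-A1Cont}.
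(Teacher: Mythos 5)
Your proposal is correct and follows essentially the same route as the paper: induction on the number of singular fibers, the base case via \aref{corollary:KRBundle-trivial}, the inductive step via the affine modification of \aref{lem:KRBundle-modification}, reduction of the Thom-space comparison map $\overline{\pi}$ to the deformed Koras-Russell case through the local isomorphism of \aref{lem:KRBundle-loc-struct}(2) and excision, the input from \aref{corollary:ThomSpacesIso}, and the very weak five lemma to close the induction. The only cosmetic difference is that you substitute the isomorphism $\widetilde{\mathcal{X}(s,R)}\setminus\widetilde{\rho}^{-1}(x_{0})\cong\mathcal{X}(s,R)\setminus\rho^{-1}(x_{0})$ directly into the top row of your diagram, where the paper keeps the two complements distinct and records the isomorphism as the leftmost vertical map.
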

\begin{proof}
We proceed by induction on the number of singular fibers of $\rho:\mathcal{X}(s,R)\to\AA^{1}_{x}$.
If there are no singular fibers, 
then $\rho$ is a trivial $\AA^{2}$-bundle by \aref{corollary:KRBundle-trivial}, 
so that $\mathcal{X}(s,R)$ is trivially $\AA^{1}$-contractible. 
Suppose $x_{0}\in\AA^{1}(F)$ is a point with singular fiber $\rho^{-1}(x_{0})$. 
As in \aref{lem:KRBundle-modification} the affine modification 
\[
\pi:\widetilde{\mathcal{X}(s,R)}\to \mathcal{X}(s,R)
\]
yields a Koras-Russell fiber bundle with one less singular fiber than $\rho:\mathcal{X}(s,R)\to\AA^{1}_{x}$,
namely:
\[
\widetilde{\rho}=\rho\circ\pi:\widetilde{\mathcal{X}(s,R)}\to\AA^{1}_{x}.
\]
It suffices to show that if $\Sigma_{s}\widetilde{\mathcal{X}(s,R)}$ is $\AA^{1}$-contractible,
then so is $\Sigma_{s} \mathcal{X}(s,R)$. 
By \aref{lem:KRBundle-loc-struct}(2),
there exists an open neighborhood $\mathcal{U}$ of $0$, 
a deformed Koras-Russell $3$-fold of the first kind $\pr_{x}:\mathcal{X}(n,\alpha_{i},p)\to\AA^{1}_{x}$, 
and an isomorphism of $\mathcal{U}$-schemes $\Psi:\mathcal{X}(s,R)|_{\mathcal{U}}\stackrel{\cong}{\to}\mathcal{X}(n,\alpha_{i},p)|_{\mathcal{U}}$. 
We form the affine modification 
\[
\pi':\widetilde{\mathcal{X}(n,\alpha_{i},p)}\to \mathcal{X}(n,\alpha_{i},p)
\]
along the principal divisor $\pr_{x}^{-1}(x_{0})$ with center in the image of $\Psi$ restricted to the center of $\pi$, 
and set $\widetilde{\pr_{x}}=\pr_{x}\circ\pi':\widetilde{\mathcal{X}(n,\alpha_{i},p)}\to\AA^{1}_{x}$.
By \cite[Proposition 2.1]{zbMATH01286199} $\Psi$ lifts to an isomorphism of $\mathcal{U}$-schemes:
\[
\widetilde{\Psi}:\widetilde{\mathcal{X}(s,R)}|_{\mathcal{U}}
\stackrel{\cong}{\to}
\widetilde{\mathcal{X}(n,\alpha_{i},p)}|_{\mathcal{U}}.
\]
Invoking excision, 
$\Psi$ induces an $\AA^{1}$-weak equivalence 
\[
\theta:
\mathrm{Th}:=\mathcal{X}(s,R)/(\mathcal{X}(s,R)\setminus\rho^{-1}(x_{0}))
\stackrel{\sim_{\AA^{1}}}{\to}
\mathrm{Th}':=\mathcal{X}(n,\alpha_{i},p)/(\mathcal{X}(n,\alpha_{i},p)\setminus \pr_{x}^{-1}(x_{0}))
\] 
between the homotopy cofibers of the open inclusions $\mathcal{X}(s,R)\setminus\rho^{-1}(x_{0})\hookrightarrow \mathcal{X}(s,R)$ and 
$\mathcal{X}(n,\alpha_{i},p)\setminus \pr_{x}^{-1}(x_{0})\hookrightarrow \mathcal{X}(n,\alpha_{i},p)$, 
respectively. 
Similarly,
$\widetilde{\Psi}$ induces an $\AA^{1}$-weak equivalence 
\[
\widetilde{\theta}:
\widetilde{\mathrm{Th}}:=\widetilde{\mathcal{X}(s,R)}/(\widetilde{\mathcal{X}(s,R)}\setminus\widetilde{\rho}^{-1}(x_{0}))
\stackrel{\sim_{\AA^{1}}}{\to}
\widetilde{\mathrm{Th}}':=\widetilde{\mathcal{X}(n,\alpha_{i},p)}/(\widetilde{\mathcal{X}(n,\alpha_{i},p)}\setminus\widetilde{\pr_{x}}^{-1}(x_{0}))
\] 
between the homotopy cofibers of the open inclusions $\widetilde{\mathcal{X}(s,R)}\setminus\widetilde{\rho}^{-1}(x_{0})\hookrightarrow\widetilde{\mathcal{X}(s,R)}$ and 
$\widetilde{\mathcal{X}(n,\alpha_{i},p)}\setminus\widetilde{\pr_{x}}^{-1}(x_{0})\hookrightarrow\widetilde{\mathcal{X}(n,\alpha_{i},p)}$, 
respectively. 
Letting $\overline{\pi}:\widetilde{\mathrm{Th}}\to\mathrm{Th}$ and $\overline{\pi}':\widetilde{\mathrm{Th}}'\to\mathrm{Th}'$ denote the natural maps induced by $\pi$ and $\pi'$, 
respectively,
we obtain the commutative diagram: 
\[
\xymatrix{
\widetilde{\mathrm{Th}}  \ar[r]^-{\widetilde{\theta}}\ar[d]_-{\overline{\pi}} & 
\widetilde{\mathrm{Th}}'\ar[d]^-{\overline{\pi}'}\\ 
\mathrm{Th}  \ar[r]^-{\theta} & \mathrm{Th'}.
}
\]
The map $\pi'$ gives rise to a commutative diagram: 
\[
\xymatrix{
\cdots \ar[r]&\Sigma_{s}\widetilde{\mathcal{X}(n,\alpha_{i},p)}\setminus\widetilde{\pr_{x}}^{-1}(x_{0})  \ar[r]\ar[d]_{\cong} & 
\Sigma_{s}\widetilde{\mathcal{X}(n,\alpha_{i},p)} \ar[d]_{\Sigma_{s}\pi'}\ar[r]&\Sigma_{s}\widetilde{\mathrm{Th}}' \ar[d]_{\Sigma_{s}\overline{\pi}'}\ar[r]&\cdots \\ 
\cdots \ar[r]&\Sigma_{s} \mathcal{X}(n,\alpha_{i},p)\setminus \pr_{x}^{-1}(x_{0}) \ar[r] & \Sigma_{s} \mathcal{X}(n,\alpha_{i},p)\ar[r] & \Sigma_{s}\mathrm{Th}'\ar[r]& \cdots.
}
\]
\aref{corollary:KRBundle-trivial} shows $\widetilde{\pr_{x}}:\widetilde{\mathcal{X}(n,\alpha_{i},p)}\to\AA^{1}_{x}$ is a trivial $\AA^{2}$-bundle, 
so that $\Sigma_{s}\widetilde{\mathcal{X}(n,\alpha_{i},p)}$ is $\AA^{1}$-contractible. 
Here $\Sigma_{s}\overline{\pi}':\Sigma_{s}\widetilde{\mathrm{Th}}'\to\Sigma_{s}\mathrm{Th}'$ and hence 
$\Sigma_{s}\overline{\pi}:\Sigma_{s}\widetilde{\mathrm{Th}}\to\Sigma_{s}\mathrm{Th}$ are $\AA^{1}$-weak equivalence by \aref{corollary:ThomSpacesIso}.
In the commutative diagram of cofiber sequences
\[
\xymatrix{
\Sigma_{s}\widetilde{\mathcal{X}(s,R)}\setminus\widetilde{\rho}^{-1}(x_{0}) \ar[r]\ar[d]_{\cong} & 
\Sigma_{s}\widetilde{\mathcal{X}(s,R)}  \ar[d]_{\Sigma_{s}\pi}\ar[r]&\Sigma_{s}\widetilde{\mathrm{Th}} \ar[d]_{\Sigma_{s}\overline{\pi}} \\ 
\Sigma_{s} \mathcal{X}(s,R)\setminus\rho^{-1}(x_{0}) \ar[r] & \Sigma_{s} \mathcal{X}(s,R)\ar[r] & \Sigma_{s}\mathrm{Th},
}
\]
$\Sigma_{s}\widetilde{\mathcal{X}(s,R)}$ is $\AA^{1}$-contractible by the induction hypothesis. 
The very weak five lemma in pointed model categories \cite[Lemma 2.1]{MRDF} implies $\Sigma_{s}\pi$ is an $\AA^{1}$-weak equivalence, 
and hence $\Sigma_{s}\mathcal{X}(s,R)\sim_{\AA^{1}}\ast$. 
\end{proof}

\subsection{$\AA^{1}$-contractibility of iterated Koras-Russell $3$-folds after simplicial suspension}
In this section we prove \aref{thm:iteratedKR-A1Cont} stated in the introduction.

\begin{theorem}
If $\mathcal{Y}(m,n_{i},\alpha_{i},p)$ is an iterated Koras-Russell $3$-fold of the first kind \eqref{equation:iteratedfirstkindpolynomial}, 
then: 
$$
\Sigma_{s} \mathcal{Y}(m,n_{i},\alpha_{i},p)
\sim_{\AA^{1}}
\ast.
$$ 
\end{theorem}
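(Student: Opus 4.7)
The proof parallels that of \aref{thm:KRBundle-Stable-A1Contsimplicual}. Write $\mathcal{Y} := \mathcal{Y}(m,n_{i},\alpha_{i},p)$. Although $\mathcal{Y}$ is not literally a Koras-Russell fiber bundle in the sense of \aref{def:KRBundle} --- its defining equation involves $z$ on the right-hand side --- the projection $\pr_{x}:\mathcal{Y}\to\AA^{1}_{x}$ is a trivial $\AA^{2}$-bundle over $\AA^{1}_{x}\setminus\{0\}$ and has unique singular fiber $\AA^{1}_{y}\times\Gamma_{m\alpha_{1},\alpha_{2}}$ at the origin, so the structural reductions from the proof of \aref{thm:KRBundle-Stable-A1Contsimplicual} remain available.

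The plan is first to perform an affine modification $\widetilde{\pi}:\widetilde{\mathcal{Y}}\to\mathcal{Y}$ along $\ddiv(x)$ with center the smooth line $L=\{x=0,\,z=z_{0},\,t=t_{0}\}\cong\AA^{1}_{y}$ over a smooth point $(z_{0},t_{0})$ of $\Gamma_{m\alpha_{1},\alpha_{2}}$ (for instance, any point with $z_{0}t_{0}\neq 0$). Substituting $z=xz'+z_{0}$, $t=xt'+t_{0}$ in the defining equation and dividing by $x$, as in the coordinate computation of \aref{lem:KRBundle-modification}(1), shows that the fiber of $\pr_{x}\circ\widetilde{\pi}$ over $0$ is an affine line in $\AA^{2}_{z',t'}$ times $\AA^{1}_{y}$, hence isomorphic to $\AA^{2}$. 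Combined with the triviality of $\pr_{x}$ away from the origin, this yields that $\pr_{x}\circ\widetilde{\pi}:\widetilde{\mathcal{Y}}\to\AA^{1}_{x}$ is a trivial $\AA^{2}$-bundle, and in particular $\widetilde{\mathcal{Y}}\sim_{\AA^{1}}\ast$. Second, I would establish a local-structure result analogous to \aref{lem:KRBundle-loc-struct}(2): on a Zariski open neighborhood $\mathcal{U}$ of $0\in\AA^{1}_{x}$, the scheme $\mathcal{Y}|_{\mathcal{U}}$ is $\mathcal{U}$-isomorphic to the restriction $\mathcal{X}(n_{1},m\alpha_{1},\alpha_{2},p')|_{\mathcal{U}}$ of a deformed Koras-Russell $3$-fold of the first kind for some $p'$; this amounts to matching Lin-Zaidenberg normal forms of the cuspidal curves $\Gamma_{m\alpha_{1},\alpha_{2}}$ appearing as singular fibers (with a variable relabeling interchanging the roles of $y$ and $z$). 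Combined with excision, this identifies $\Sigma_{s}\mathcal{Y}/(\mathcal{Y}\setminus\pr_{x}^{-1}(0))$ with the corresponding cofiber for $\mathcal{X}(n_{1},m\alpha_{1},\alpha_{2},p')$ up to $\AA^{1}$-weak equivalence, and \aref{corollary:ThomSpacesIso} applied to this deformed Koras-Russell $3$-fold shows that the induced vertical map in the commutative diagram of cofiber sequences
\begin{equation*}
\xymatrix{
\Sigma_{s}(\widetilde{\mathcal{Y}}\setminus\widetilde{\pi}^{-1}\pr_{x}^{-1}(0)) \ar[r]\ar[d]^-{\cong} & \Sigma_{s}\widetilde{\mathcal{Y}} \ar[r]\ar[d]^-{\Sigma_{s}\widetilde{\pi}} & \Sigma_{s}\widetilde{\mathcal{Y}}/(\widetilde{\mathcal{Y}}\setminus\widetilde{\pi}^{-1}\pr_{x}^{-1}(0)) \ar[d]^-{\Sigma_{s}\overline{\widetilde{\pi}}}\\
\Sigma_{s}(\mathcal{Y}\setminus\pr_{x}^{-1}(0)) \ar[r] & \Sigma_{s}\mathcal{Y} \ar[r] & \Sigma_{s}\mathcal{Y}/(\mathcal{Y}\setminus\pr_{x}^{-1}(0))
}
\end{equation*}
is an $\AA^{1}$-weak equivalence. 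The leftmost vertical is an isomorphism because $\widetilde{\pi}$ is an isomorphism away from its center, so the very weak five lemma \cite[Lemma 2.1]{MRDF} applied to the middle column yields $\Sigma_{s}\mathcal{Y}\sim_{\AA^{1}}\ast$.

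The main obstacle is the local-structure result: namely, promoting the apparent coincidence of the singular fibers of $\mathcal{Y}$ and $\mathcal{X}(n_{1},m\alpha_{1},\alpha_{2},p')$ to a genuine isomorphism of $\mathcal{U}$-schemes, so that \aref{corollary:ThomSpacesIso} applies to the Thom-like cofiber on the right of the diagram. A robust alternative that sidesteps this comparison altogether is to re-run the argument of \aref{theorem:DefKR-StableA1-cont} directly for $\mathcal{Y}$: exploit the $\GG_{a}$-action on $\mathcal{Y}$ with fixed locus $\AA^{1}_{y}=\{x=z=t=0\}$ and an analogue of \aref{lem:DefKR-Punctured-Weak-Equiv} adapted to parameters $(n_{1},m\alpha_{1},\alpha_{2})$ to show $\mathcal{Y}\setminus\AA^{1}_{y}\sim_{\AA^{1}}\AA^{2}\setminus\{(0,0)\}$; then combine homotopy purity along $\AA^{1}_{y}\subset\mathcal{Y}$ with the Milnor-Witt $K$-theory computation enabled by the stable $\AA^{1}$-contractibility already established in \aref{corollary:iterated stable}, exactly as in the proof of \aref{theorem:DefKR-StableA1-cont}.
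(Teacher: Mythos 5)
Your ``robust alternative'' at the end is, in fact, the paper's own proof. The paper takes the line $\AA^{1}_{y}=\{x=z=t=0\}\subset\mathcal{Y}(m,n_{i},\alpha_{i},p)$, uses homotopy purity to identify $\mathcal{Y}/(\mathcal{Y}\setminus\AA^{1}_{y})$ with $(\PP^{1})^{\wedge 2}$, proves the analogue of \aref{lem:DefKR-Punctured-Weak-Equiv} for the parameters $(m\alpha_{1},\alpha_{2})$ by the same cyclic-cover-and-algebraic-space construction (now a $\mu_{m\alpha_{1}}$-cover via $(u,v)\mapsto(-u^{m\alpha_{1}}-v^{\alpha_{2}},v)$, with the roles of $y$ and $z$ interchanged) to obtain $\mathcal{Y}\setminus\AA^{1}_{y}\sim_{\AA^{1}}\AA^{2}\setminus\{(0,0)\}$, and then shows the connecting map is an $\AA^{1}$-weak equivalence through the Milnor--Witt $K$-theory cohomology computation, which is available precisely because $\mathcal{Y}\wedge(\PP^{1})^{\wedge n}\sim_{\AA^{1}}\ast$ by \aref{corollary:iterated stable}. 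So that branch of your proposal is correct and coincides with the paper's argument in all essentials.

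Your primary route --- modify along $\ddiv(x)$ over a smooth point of $\Gamma_{m\alpha_{1},\alpha_{2}}$ to trivialize the fibration and then compare Thom spaces --- is not what the paper does, and the obstacle you flag is genuine rather than cosmetic. The coordinate computation showing $\widetilde{\rho}^{-1}(0)\cong\AA^{2}$ does go through, but making $\Sigma_{s}\overline{\widetilde{\pi}}$ an $\AA^{1}$-weak equivalence requires an analogue of \aref{corollary:ThomSpacesIso} for $\mathcal{Y}$, and that corollary is deduced from $\Sigma_{s}\mathcal{X}(n,\alpha_{i},p)\sim_{\AA^{1}}\ast$; without the local $\mathcal{U}$-isomorphism to a deformed Koras--Russell $3$-fold the argument becomes circular, since one would be assuming the very contractibility of $\Sigma_{s}\mathcal{Y}$ one is trying to prove. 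Moreover, the Lin--Zaidenberg/universal-property mechanism of \aref{lem:KRBundle-loc-struct}(2) does not apply as stated, because the right-hand side of \eqref{equation:iteratedfirstkindpolynomial} involves $z$ through $x^{n_{2}}y+z^{m}$, so the required automorphism would have to act on $F[x][y,z,t]$ rather than $F[x][y,t]$ and would have to be compatible with the modification data; the paper sidesteps this entirely by running the purity and Milnor--Witt computation directly on $\mathcal{Y}$.
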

\begin{proof}
Setting $\AA^{1}_{y}:=\{x=z=t=0\}\subset \mathcal{Y}(m,n_{i},\alpha_{i},p)$ gives us the connecting map 
\begin{equation}
\label{equation:iteratedconnectingmap}
\mathcal{Y}(m,n_{i},\alpha_{i},p)/(\mathcal{Y}(m,n_{i},\alpha_{i},p)\setminus \AA^{1}_{y})\sim_{\AA^{1}}(\mathbb{P}^{1})^{\wedge2}
\stackrel{\partial}{\to}
\Sigma_{s} \mathcal{Y}(m,n_{i},\alpha_{i},p)\setminus \AA^{1}_{y}
\end{equation}
associated to the cofiber sequence: 
\[
\mathcal{Y}(m,n_{i},\alpha_{i},p)\setminus \AA^{1}_{y}\to \mathcal{Y}(m,n_{i},\alpha_{i},p)\to \mathcal{Y}(m,n_{i},\alpha_{i},p)/(\mathcal{Y}(m,n_{i},\alpha_{i},p)\setminus \AA^{1}_{y}).
\]
To conclude $\Sigma_{s} \mathcal{Y}(m,n_{i},\alpha_{i},p)\sim_{\AA^{1}}\ast$ it suffices to show $\partial$ in \eqref{equation:iteratedconnectingmap} is an $\AA^{1}$-weak equivalence. 
First we observe that $\mathcal{Y}(m,n_{i},\alpha_{i},p)\wedge(\mathbb{P}^{1})^{\wedge n}\sim_{\AA^{1}}\ast$ for $n\gg 0$ by \aref{corollary:iterated stable}.
Following the proof of \aref{corollary:ThomSpacesIso} we show $\mathcal{Y}(m,n_{i},\alpha_{i},p)\setminus \AA^{1}_{y}$ is $\AA^{1}$-weakly equivalent to $\AA^{2}\setminus\{(0,0)\}$.
 
Set $\pi:=\mathrm{pr}_{x,t}:\mathcal{Y}(m,n_{i},\alpha_{i},p)\to\AA^{2}$ and define $\varphi:\AA^{2}=\mathrm{Spec}(F[u,v])\to\AA^{2}$ by 
$(u,v)\mapsto(x,t)=(-u^{\alpha_{1} m}-v^{\alpha_2},v)$;
this is a cyclic Galois cover of order $m\alpha_{1}$ which is fully ramified over $\{t=0\}\subset\AA^{2}$ and \'etale elsewhere.
The fiber product $\mathcal{Y}(m,n_{i},\alpha_{i},p)\times_{\AA^{2}_{x,t}}\AA^{2}_{u,v}$ is isomorphic to the closed subscheme 
\[
\widetilde{\mathcal{Y}(m,n_{i},\alpha_{i},p)}
:=
\{(-u^{\alpha_{1} m}-v^{\alpha_2})^{n_{1}}z=((-u^{\alpha_{1} m}-v^{\alpha_2})^{n_2}y+z^{m})^{\alpha_{1}}+v^{\alpha_2}+(-u^{\alpha_1m}-v^{\alpha_2})p(-u^{\alpha_1m}-v^{\alpha_2},y,v)\},
\]
of $\AA^{4}_{u,v,y,z}$.
The Galois group $\mu_{m\alpha_{1}}$ of the cover $\varphi$ of $\widetilde{\mathcal{Y}(m,n_{i},\alpha_{i},p)}$ is generated by $(u,v,y,z)\mapsto(\varepsilon u,v,y,z)$,
where $\varepsilon\in F^{*}$ is a primitive $\alpha_{1}m$-th root of unity. 
Moreover, 
the projection map 
\[
\widetilde{\pi}:=
\mathrm{pr}_{u,v}:
\widetilde{\mathcal{Y}(m,n_{i},\alpha_{i},p)}
\to
\AA^{2}
\]
restricts to a trivial $\AA^{1}$-bundle over the complement in $\AA^{2}$ of the irreducible cuspidal curve $\Gamma_{\alpha_{1}m,\alpha_{2}}=\{u^{\alpha_{1} m}+v^{\alpha_2}=0\}$. 
The fiber of $\widetilde{\pi}$ over the origin $\{(0,0)\}$ is supported by the line $\widetilde{\AA^{1}_{z}}:=\{u=v=z=0\}$,
and $\widetilde{\pi}^{-1}(\Gamma_{\alpha_{1}m,\alpha_{2}})$ is isomorphic to: 
\[
\mathrm{Spec}(F[u,v][y,z]/(-u^{\alpha_{1} m}-v^{\alpha_2},z^{m\alpha_{1}}-u^{m\alpha_{1}})).
\]
The normalization map $\nu:\widetilde{\Gamma_{\alpha_{1}m,\alpha_{2}}}=\AA^{1}_{w}\to\Gamma_{\alpha_{1}m,\alpha_{2}}$,
$w\mapsto(w^{\alpha_2},w^{m\alpha_{1}})$, 
restricts to an isomorphism between $\widetilde{\Gamma_{\alpha_{1}m,\alpha_{2}}}\setminus\{(0,0)\}=\AA_{*}^{1}$ and $\Gamma_{\alpha_{1}m,\alpha_{2}}\setminus\{(0,0)\}$, 
while $\widetilde{\pi}^{-1}(\Gamma_{\alpha_{1}m,\alpha_{2}}\setminus\{(0,0)\})$ splits into a disjoint union of $\alpha_{1}m$ irreducible components 
\[
\mathcal{D}_{\varepsilon^{i}}:\{u^{m\alpha_{1}}-v^{\alpha_2}=z-\varepsilon^{i}u=0\}\setminus\{u=v=z=0\},
0\leq i\leq \alpha_{1}m-1,
\] 
all isomorphic to $\AA_{*}^{1}\times\mathrm{Spec}(F[y])$.
As in the proof of \aref{lem:DefKR-Punctured-Weak-Equiv} we conclude the map 
\[
\widetilde{\pi}:
\widetilde{\mathcal{Y}(m,n_{i},\alpha_{i},p)}\setminus\widetilde{\AA^{1}_{z}}
\to\AA^{2}\setminus\{(0,0)\}
\]
factors through a locally trivial $\AA^{1}$-bundle 
\[
\widetilde{\rho}:
\widetilde{\mathcal{Y}(m,n_{i},\alpha_{i},p)}\setminus\widetilde{\AA^{1}_{z}}
\to
\widetilde{\mathcal{S}}
\]
over a smooth scheme $\widetilde{\delta}:\widetilde{\mathcal{S}}\to\AA^{2}\setminus\{(0,0)\}$.
This is achieved by gluing together the $m\alpha_{1}$-copies $\widetilde{\mathcal{S}}_{\varepsilon^{i}}=\Spec(F[u_{\varepsilon^{i}},v])\setminus\{(0,0)\}$ of $\AA^{2}\setminus\{(0,0)\}$, 
$0\leq i\leq m\alpha_{1}-1$, 
using the identity map away from the curves $\{u_{\varepsilon^{i}}^{m\alpha_{1}}-v^{\alpha_2}=0\}$.
Furthermore, 
$\widetilde{\rho}$ descends to an \'etale locally trivial $\AA^{1}$-bundle 
\[
\rho:\mathcal{Y}(m,n_{i},\alpha_{i},p)\setminus \AA^{1}_{y}\to\mathfrak{\mathcal{S}}(m\alpha_{1},\alpha_2)
\]
over the algebraic space constructed in \aref{lem:DefKR-Punctured-Weak-Equiv}. 
Since $\mathfrak{S}(m\alpha_{1},\alpha_2)=\mathfrak{S}(m\alpha_{1},1)$ there exists a scheme and a total space of Zariski locally trivial $\AA^{1}$-bundles over 
$\mathcal{Y}(m,n_{i},\alpha_{i},p)\setminus \AA^{1}_{y}$ and $\AA^{2}\setminus\{(0,0)\}\times\AA^{1}$. 
This construction furnishes the desired $\AA^{1}$-weak equivalence between $\mathcal{Y}(m,n_{i},\alpha_{i},p)\setminus \AA^{1}_{y}$ and the punctured affine plane $\AA^{2}\setminus\{(0,0)\}$. 
\end{proof}

\subsection*{Acknowledgements}
The authors acknowledge hospitality and support from Institut Mittag-Leffler during Spring 2017, 
and RCN Frontier Research Group Project no.~250399 "Motivic Hopf Equations".
This work received support from  the French "Investissements d'Avenir" program, project ISITE-BFC (contract ANR-lS-IDEX-OOOB). 
{\O}stv{\ae}r is supported by a Friedrich Wilhelm Bessel Research Award from the Humboldt Foundation and a Nelder Visiting Fellowship from Imperial College London.

\bibliographystyle{plain}
\bibliography{DPO}

\begin{thebibliography}{10}

\bibitem{AEH}
S.~S. Abhyankar, W.~Heinzer, and P.~Eakin.
\newblock On the uniqueness of the coefficient ring in a polynomial ring.
\newblock {\em J. Algebra}, 23:310--342, 1972.

\bibitem{AsokOttawa}
A.~Asok.
\newblock {${\bf A}^1$-contractibility and topological contractibility}.
\newblock {\em "Group actions, generalized cohomology theories, and affine
  algebraic geometry", University of Ottawa, Canada}, 2011.

\bibitem{zbMATH05998029}
A.~{Asok}.
\newblock {Motives of some acyclic varieties.}
\newblock {\em {Homology Homotopy Appl.}}, 13(2):329--335, 2011.

\bibitem{MR2335246}
A.~Asok and B.~Doran.
\newblock On unipotent quotients and some {$\Bbb A^1$}-contractible smooth
  schemes.
\newblock {\em Int. Math. Res. Pap. IMRP}, (2):Art. ID rpm005, 51, 2007.

\bibitem{MR2423761}
A.~Asok and B.~Doran.
\newblock Vector bundles on contractible smooth schemes.
\newblock {\em Duke Math. J.}, 143(3):513--530, 2008.

\bibitem{MR2803793}
A.~Asok and F.~Morel.
\newblock Smooth varieties up to {$\Bbb A^1$}-homotopy and algebraic
  {$h$}-cobordisms.
\newblock {\em Adv. Math.}, 227(5):1990--2058, 2011.

\bibitem{zbMATH06473189}
C.~{Balwe}, A.~{Hogadi}, and A.~{Sawant}.
\newblock {$\mathbb A^1$-connected components of schemes.}
\newblock {\em {Adv. Math.}}, 282:335--361, 2015.

\bibitem{zbMATH03577344}
H.~{Bass}, E.H. {Connell}, and D.L. {Wright}.
\newblock {Locally polynomial algebras are symmetric algebras.}
\newblock {\em {Invent. Math.}}, 38:279--299, 1977.

\bibitem{zbMATH03983347}
S.~{Bloch}.
\newblock {Algebraic cycles and higher $K$-theory.}
\newblock {\em {Adv. Math.}}, 61:267--304, 1986.

\bibitem{MR1269719}
S.~Bloch.
\newblock The moving lemma for higher {C}how groups.
\newblock {\em J. Algebraic Geom.}, 3(3):537--568, 1994.

\bibitem{MR2567148}
D.~Daigle and Sh. Kaliman.
\newblock A note on locally nilpotent derivations and variables of
  {$k[X,Y,Z]$}.
\newblock {\em Canad. Math. Bull.}, 52(4):535--543, 2009.

\bibitem{MR0206035}
E.~D. Davis.
\newblock Ideals of the principal class, {$R$}-sequences and a certain monoidal
  transformation.
\newblock {\em Pacific J. Math.}, 20:197--205, 1967.

\bibitem{Duboulozaffinemodification}
A.~Dubouloz.
\newblock {Quelques remarques sur la notion de modification affine}.
\newblock {\em arXiv:0503142}, 2005.

\bibitem{MRDF}
A.~Dubouloz and J.~Fasel.
\newblock Families of $\mathbb{A}^1$-contractible affine threefolds.
\newblock {\em Algebr. Geom.}, 5(1):1--14, 2018.

\bibitem{MR3285620}
A.~Dubouloz, D.~R. Finston, and I.~Jaradat.
\newblock Proper triangular {$\Bbb{G}_a$}-actions on {$\Bbb{A}^4$} are
  translations.
\newblock {\em Algebra Number Theory}, 8(8):1959--1984, 2014.

\bibitem{MR3351185}
A.~Dubouloz and T.~Kishimoto.
\newblock Log-uniruled affine varieties without cylinder-like open subsets.
\newblock {\em Bull. Soc. Math. France}, 143(2):383--401, 2015.

\bibitem{zbMATH05116082}
B.~I. {Dundas}, M.~{Levine}, P.~A. {{\O}stv{\ae}r}, O.~{R\"ondigs}, and
  V.~{Voevodsky}.
\newblock {\em {Motivic homotopy theory. Lectures at a summer school in
  Nordfjordeid}}.
\newblock Springer-Verlag, 2007.

\bibitem{zbMATH06170873}
J.~{Fasel}.
\newblock {The projective bundle theorem for $I^j$-cohomology.}
\newblock {\em {J. K-Theory}}, 11(2):413--464, 2013.

\bibitem{MR2259515}
G.~Freudenburg.
\newblock {\em Algebraic theory of locally nilpotent derivations}, volume 136
  of {\em Encyclopaedia of Mathematical Sciences}.
\newblock Springer-Verlag, 2006.
\newblock Invariant Theory and Algebraic Transformation Groups, VII.

\bibitem{Fujita}
T.~Fujita.
\newblock On {Z}ariski problem.
\newblock {\em Proc. Japan Acad. Ser. A Math. Sci.}, 55(3):106--110, 1979.

\bibitem{MR687591}
T.~Fujita.
\newblock On the topology of noncomplete algebraic surfaces.
\newblock {\em J. Fac. Sci. Univ. Tokyo Sect. IA Math.}, 29(3):503--566, 1982.

\bibitem{zbMATH01027930}
W.~{Fulton}.
\newblock {\em {Intersection theory. 2nd ed.}}, volume~2.
\newblock Springer-Verlag, 2nd ed. edition, 1998.

\bibitem{MR3223879}
N.~Gupta.
\newblock On the family of affine threefolds {$x^my=F(x,z,t)$}.
\newblock {\em Compos. Math.}, 150(6):979--998, 2014.

\bibitem{MR3438041}
N.~Gupta.
\newblock A survey on {Z}ariski cancellation problem.
\newblock {\em Indian J. Pure Appl. Math.}, 46(6):865--877, 2015.

\bibitem{zbMATH02103273}
A.~{Hatcher}.
\newblock {\em {Algebraic topology.}}
\newblock Cambridge University Press, 2002.

\bibitem{MR3549169}
M.~Hoyois, A.~Krishna, and P.~A. {\O}stv{\ae}r.
\newblock {$\Bbb A^1$}-contractibility of {K}oras-{R}ussell threefolds.
\newblock {\em Algebr. Geom.}, 3(4):407--423, 2016.

\bibitem{MR1895930}
Sh. Kaliman.
\newblock Polynomials with general {$\bold C^2$}-fibers are variables.
\newblock {\em Pacific J. Math.}, 203(1):161--190, 2002.

\bibitem{MR1464577}
Sh. Kaliman, M.~Koras, L.~Makar-Limanov, and P.~Russell.
\newblock {$\bold C^*$}-actions on {$\bold C^3$} are linearizable.
\newblock {\em Electron. Res. Announc. Amer. Math. Soc.}, 3:63--71, 1997.

\bibitem{MR2327241}
Sh. Kaliman and L.~Makar-Limanov.
\newblock A{K}-invariant of affine domains.
\newblock In {\em Affine algebraic geometry}, pages 231--255. Osaka Univ.
  Press, Osaka, 2007.

\bibitem{MR1817379}
Sh. Kaliman and M.~Za{\u\i}denberg.
\newblock Miyanishi's characterization of the affine 3-space does not hold in
  higher dimensions.
\newblock {\em Ann. Inst. Fourier (Grenoble)}, 50(6):1649--1669 (2001), 2000.

\bibitem{zbMATH01286199}
Sh. {Kaliman} and M.~{Za{\u\i}denberg}.
\newblock {Affine modifications and affine hypersurfaces with a very transitive
  automorphism group.}
\newblock {\em {Transform. Groups}}, 4(1):53--95, 1999.

\bibitem{MR503968}
T.~Kambayashi and M.~Miyanishi.
\newblock On flat fibrations by the affine line.
\newblock {\em Illinois J. Math.}, 22(4):662--671, 1978.

\bibitem{zbMATH03350017}
D.~{Knutson}.
\newblock {Algebraic spaces.}
\newblock {Lecture Notes in Mathematics. Vol. 203. Springer-Verlag. VI, 261
  p.}, 1971.

\bibitem{MR1423629}
H.~Kraft.
\newblock Challenging problems on affine {$n$}-space.
\newblock {\em Ast\'erisque}, (237):Exp.\ No.\ 802, 5, 295--317, 1996.
\newblock S\'eminaire Bourbaki, Vol. 1994/95.

\bibitem{zbMATH03899059}
V.~Ya. {Lin} and M.~{Za{\u\i}denberg}.
\newblock {An irreducible simply connected algebraic curve in ${\mathbb C}\sp
  2$ is equivalent to a quasihomogeneous curve.}
\newblock {\em {Sov. Math., Dokl.}}, 28:200--204, 1983.

\bibitem{zbMATH00997597}
L.~{Makar-Limanov}.
\newblock {On the hypersurface $x+x^2y+z^2+t^3=0$ in $\mathbb C^4$ or a
  $\mathbb C^3$-like threefold which is not $\mathbb C^3$.}
\newblock {\em {Isr. J. Math.}}, 96:419--429, 1996.

\bibitem{MR0419460}
M.~Miyanishi.
\newblock An algebraic characterization of the affine plane.
\newblock {\em J. Math. Kyoto Univ.}, 15:169--184, 1975.

\bibitem{MR564667}
M.~Miyanishi and T.~Sugie.
\newblock Affine surfaces containing cylinderlike open sets.
\newblock {\em J. Math. Kyoto Univ.}, 20(1):11--42, 1980.

\bibitem{MR1813224}
F.~Morel and V.~Voevodsky.
\newblock {${\bf A}^1$}-homotopy theory of schemes.
\newblock {\em Inst. Hautes \'Etudes Sci. Publ. Math.}, (90):45--143 (2001),
  1999.

\bibitem{zbMATH01983932}
M.~P. {Murthy}.
\newblock {Cancellation problem for projective modules over certain affine
  algebras.}
\newblock In {\em {Proceedings of the international colloquium on algebra,
  arithmetic and geometry, Mumbai, 2000. Part I and II}}, pages 493--507. 2002.

\bibitem{zbMATH04105878}
W.~{Neumann} and L.~{Rudolph}.
\newblock {Unfoldings in knot theory.}
\newblock {\em {Math. Ann.}}, 278:409--439, 1987.

\bibitem{zbMATH04105879}
W.~{Neumann} and L.~{Rudolph}.
\newblock {Corrigendum to: ``Unfoldings in knot theory''.}
\newblock {\em {Math. Ann.}}, 282(2):349--351, 1988.

\bibitem{zbMATH03305711}
F.~{Pham}.
\newblock {Formules de Picard-Lefschetz g\'en\'eralis\'ees et ramification des
  int\'egrales.}
\newblock {\em {Bull. Soc. Math. Fr.}}, 93:333--367, 1965.

\bibitem{MR0286801}
C.~P. Ramanujam.
\newblock A topological characterisation of the affine plane as an algebraic
  variety.
\newblock {\em Ann. of Math. (2)}, 94:69--88, 1971.

\bibitem{zbMATH01016564}
M.~{Rost}.
\newblock {Chow groups with coefficients.}
\newblock {\em {Doc. Math. (Bielefeld)}}, 1:319--393, 1996.

\bibitem{MR3084720}
D.~Rydh.
\newblock Existence and properties of geometric quotients.
\newblock {\em J. Algebraic Geom.}, 22(4):629--669, 2013.

\bibitem{zbMATH03855256}
A.~{Sathaye}.
\newblock {Polynomial ring in two variables over a D.V.R.: A criterion.}
\newblock {\em {Invent. Math.}}, 74:159--168, 1983.

\bibitem{zbMATH01445142}
A.~{Suslin} and V.~{Voevodsky}.
\newblock {Bloch-Kato conjecture and motivic cohomology with finite
  coefficients.}
\newblock In {\em {The arithmetic and geometry of algebraic cycles. Proceedings
  of the NATO Advanced Study Institute, Banff, 1998}}, pages 117--189.
  Dordrecht: Kluwer Academic Publishers, 2000.

\bibitem{zbMATH01724017}
V.~{Voevodsky}.
\newblock {Motivic cohomology groups are isomorphic to higher Chow groups in
  any characteristic.}
\newblock {\em {Int. Math. Res. Not.}}, 2002(7):351--355, 2002.

\bibitem{zbMATH03015497}
J.H.C. {Whitehead}.
\newblock {Certain theorems about three-dimensional manifolds. I.}
\newblock {\em {Q. J. Math., Oxf. Ser.}}, 5:308--320, 1934.

\bibitem{zbMATH03019987}
J.H.C. {Whitehead}.
\newblock {A certain open manifold whose group is unity.}
\newblock {\em {Q. J. Math., Oxf. Ser.}}, 6:268--279, 1935.

\bibitem{zbMATH06617521}
K.~{Wickelgren}.
\newblock {Desuspensions of
  $S^1\wedge(\mathbb{P}_{\mathbb{Q}}^1-\{0,1,\infty\})$.}
\newblock {\em {Int. J. Math.}}, 27(7):18, 2016.

\bibitem{MR1734345}
M.~Za{\u\i}denberg.
\newblock Exotic algebraic structures on affine spaces.
\newblock {\em Algebra i Analiz}, 11(5):3--73, 1999.

\bibitem{MR0008468}
O.~Zariski.
\newblock Foundations of a general theory of birational correspondences.
\newblock {\em Trans. Amer. Math. Soc.}, 53:490--542, 1943.

\bibitem{zbMATH03950697}
O.~{Zariski}.
\newblock {Le probl\`eme des modules pour les branches planes. Cours donn\'e au
  Centre de Math\'ematiques de l'\'Ecole Polytechnique. Nouvelle \'ed. revue
  par l'auteur. R\'edig\'e par Fran\c{c}ois Kmety et Michel Merle. Avec un
  appendice de Bernard Teissier.}
\newblock {Paris: Hermann, \'Editeurs des Sciences et des Arts. VII, 212 p.},
  1986.

\end{thebibliography}

\end{document}